\newtheorem{theorem}{Theorem}[section]
\newtheorem{corollary}{Corollary}[theorem]
\newtheorem{lemma}{Lemma}[theorem]
\newtheorem{definition}{Definition}[theorem]
\newtheorem{proposition}{Proposition}[theorem]
\begin{document}
	
	\title{Granular DeGroot Dynamics -- a Model for Robust Naive Learning in Social Networks}
	
	\author{Gideon Amir\footnote{Department of Mathematics, Bar Ilan University, Israel. Email: gidi.amir@gmail.com}\and Itai Arieli\footnote{Faculty of Industrial Engineering and Management, Technion - Israel Institute of Technology. Email: iarieli@technion.ac.il} \and Galit Ashkenazi-Golan\footnote{Department of Mathematics, London School of Economics and Political Science. Email: galit.ashkenazi@gmail.com} \and Ron Peretz\footnote{Department of Economics, Bar Ilan University, Israel. Email: ron.peretz@biu.ac.il}}
	\date{\today}
	
	\maketitle
	
	\begin{abstract}
		We study a model of opinion exchange in social networks where a state of the world is realized and every agent receives a zero-mean noisy signal of the realized state. It is known from Golub and Jackson \cite{golub2010naive} that under DeGroot \cite{degroot1974reaching} dynamics agents reach a consensus that is close to the state of the world  when the network is large. The DeGroot dynamics, however, is highly non-robust and the presence of a single ``{adversarial} agent'' that does not adhere to the updating rule can sway the public consensus to any other value.
		We introduce a variant of DeGroot dynamics that we call \emph{ $\frac{1}{m}$-DeGroot}. $\frac{1}{m}$-DeGroot dynamics approximates standard DeGroot dynamics to the nearest rational number with $m$ as its denominator and like the DeGroot dynamics it is Markovian and stationary. We show that in contrast to standard DeGroot dynamics, $\frac{1}{m}$-DeGroot  dynamics is highly robust both to the presence of {adversarial} agents and to certain types of misspecifications.
	\end{abstract}
	
	\section{Introduction}
	Social networks play a very important role as a conduit of information. Individuals repeatedly interact with their peers and adjust the choices they make by responding to their observed behavior. One explanation for such adaptive behavior is informational effects; i.e., individuals infer the underlying information that gives rise to the observed behavior and based on this \emph{social learning} change their own behavior (e.g., \cite{aumann1976agreeing}, \cite{geanakoplos1982we}, \cite{adlo}).  Making fully rational inferences about the private information of \emph{all} agents given the observed behavior, however, is known to be conceptually and computationally complex.\footnote{See, e.g., \cite{hkazla2019reasoning} and \cite{hkazla2021bayesian}.} In practice, agents are often unaware of the structure of the network. This unawareness prevents the agents from tracing the information path in an environment of repeated interaction.
	
	In order to overcome the complexity of rational decision making, an alternative approach has been developed. The underlying assumption of the alternative approach is that individuals follow a particular heuristics when revising their opinion. The most prominent such approach is the one proposed by DeGroot \cite{degroot1974reaching} and brought into the economics literature by DeMarzo et al. \cite{demarzo2003persuasion}. For a related work that studies belief exchange in networks see \cite{jadbabaie2012non, molavi2018theory}.
	
	The DeGroot framework comprises a social network that is represented by a (finite) graph. The vertices of the graph represent agents who are in direct relations with their neighbors in the graph. The true state of the world is represented by a real number $\mu$. In each time period $t\geq 0$ each agent $i$ obtains a subjective opinion $A_{i,t}\in\mathbb R$ regarding the state of the world. A common assumption is that the initial opinions $A_{i,0}=\mu+\varepsilon_i$, where the $\{\varepsilon_i\}_{i\in V}$ are i.i.d.\ with expectation zero. As time progresses, the agents  simultaneously revise their opinions while relying only on the opinions of their neighbors. The DeGroot updating rule asserts that an agent updates her opinion to be equal to the average of the opinions of her neighbors. 
	
	An important property of the DeGroot updating procedure, as established both by DeGroot \cite{degroot1974reaching} and by Demarzo et al. \cite{demarzo2003persuasion}, is that in the long run agents reach consensus. That is, when the entire population uses DeGroot heuristics, agents' opinions converge to the same value. Moreover, this value is a weighted average  of the initial opinions at time zero. The weights, called the \emph{measure of centrality}, are given by the stationary distribution of the random walk over the underlying network graph.
	
	When the graph is large, one can infer $\mu$ from the initial opinions $\{A_{i,0}\}_{i\in V}$, but no single agent can make such an inference by herself, since she observes her own opinion and the opinions of her neighbors only. However,  Golub and Jackson \cite{golub2010naive} demonstrated the following striking property of the DeGroot dynamics, which they termed \emph{naive learning}. Despite the simplicity of the DeGroot rule, when the graph is large and no agent is too central, the agents' opinions converge to a consensual opinion that is close to the true state of the world with high probability.
	This means that under a mild assumption on the network, 
	DeGroot heuristics enable every agent to trace back the state of the world with high probability by eliciting the so called \emph{wisdom of the crowd}.
	
	A weakness of the standard DeGroot dynamics as a tool to aggregate private information is its sensitivity to minor deviations (see Section~\ref{section fragility}). The source of such deviations may be either malicious agents, who have {the} incentive to change public opinion, or agents who suffer from behavioral biases. For example, suppose one of the agents {is adversarial, namely, it} does not follow the updating rule, while all the others do. {Specifically, say that the adversarial} agent is ``stubborn''; that is, she starts with a certain opinion and never changes it. Applying DeGroot dynamics in the presence of such a stubborn agent results in everyone adopting the opinion of the stubborn agent in any finite connected graph (see Lemma \ref{lemma bots}). Another possible modification of the model is distorted monitoring: the agents do not observe the exact opinion of each of their neighbors, but rather a slight modification of it. Say, for example, that everyone sees $x+\beta$ when the true opinion is $x$. Then, even if $\beta$ is very small (yet positive), the resulting limiting opinions of all of the agents will be $\infty$ regardless of the state of the world.
	
	The core research question we address in this paper is the following:
	does there exist 
	a simple boundedly rational heuristics that enable naive learning that is immune to the above-mentioned deviations? 
	We provide a positive answer to this question in a setting with either a large or a countably infinite graph of bounded degree.
	
	We introduce a parameterized family of heuristics, $\frac{1}{m}$-\emph{DeGroot}  that approximate DeGroot heuristics by coarsening the set of feasible opinions for the agents.
	According to the  $\frac{1}{m}$-\emph{DeGroot} updating rule, whenever an agent has a revision opportunity and the current average of the opinions of her neighbors is $y$ (which is the DeGroot updating value), 
	she updates her current opinion $x$ to be the closest rational number to $y$ that takes the form $\frac{k}{m}$ across all integers $k\in \mathbb{Z}$. That is, agents are restricted to choosing an opinion from the discrete $\frac{1}{m}$-grid,  $\{\frac{k}{m}\}_{k\in\mathbb{Z}}$, and so choose the one that is closest to\footnote{Ties are broken in favor of the agents' current belief. See discussion on the tie breaking rule in Section~\ref{section discussion}.} $y$.

	Our main results show that  $\frac{1}{m}$-DeGroot dynamics leads to approximate learning while being immune to minor deviations. Specifically, we say that an agent achieves \emph{$(\delta,\rho)$-learning} if her opinion eventually lies within $\delta$ of the true state of the world with probability at least $1-\rho$. A dynamics on an infinite graph $G$ satisfies $(\delta,\rho)$-learning if all but a finite set of agents satisfy $(\delta,\rho)$-learning. 
	
	Our first main result, Theorem~\ref{theorem: main one graph}, loosely says that if the graph $G$ is infinite with a bounded degree and a moderate growth rate,\footnote{The specific assumptions on the growth of the graph are detailed in Section~\ref{section main results}.} then for every $\delta,\rho>0$ and every $m$ large enough, $\frac{1}{m}$-DeGroot dynamics satisfies $(\delta,\rho)$-learning. Furthermore, the learning takes place even in the presence of finitely many {adversarial} agents and moderately distorted monitoring.\footnote{Establishing the immunity to monitoring distortions $\beta<\frac{1}{2m}$ is simple. To this end, we formally specify the $\frac{1}{m}$-DeGroot updating rule with an additional preliminary projection as follows: first project the opinions of the neighbors on the $\frac{1}{m}$-grid, then take the average of the projected opinions, and lastly project the average on the $\frac{1}{m}$-grid. The first projection is what eliminates the distortion. } 
	Thus, by a slight modification of standard DeGroot heuristics we offer a highly robust learning procedure. 
	
	Our second main result, Theorem~\ref{theorem: main family}, is a generalization of the first main result to families of graphs. Theorem~\ref{theorem: main family} can be viewed as an analog of the result of Golub and Jackson~\cite{golub2010naive} who refer to growing families of finite graphs.
	
	{Theorem~\ref{thm R} provides the idea driving our main results. The influence of each adversarial agent is limited to a certain radius around it depending on the granularity parameter $m$. Increasing $m$ has two effects. On one hand, it increases the radius of influence and thus fewer agents achieve $(\delta,\rho)$-learning. On the other hand, it increases the accuracy of learning and thus allows for smaller $\delta$ and $\rho$. In particular, as $m$ goes to infinity, the radius of influence grows to infinity and $\delta$ and $\rho$ go to zero, thus obtaining asymptotic behavior similar the the standard DeGroot dynamics.}
	
	Figure ~\ref{fig:simulation} shows two simulations: one of the $\frac 1 {100}$-DeGroot dynamics and the other of the standard DeGroot dynamics. In both simulations there is a single stubborn agent. Under the $\frac 1 {100}$-DeGroot dynamics, the influence of the stubborn agent is limited to a certain radius around it, whereas under the standard DeGroot dynamics, it is unlimited. 
	
	\begin{figure}
		\centering
		\begin{subfigure}[b]{0.8\textwidth}
			\centering
			\includegraphics[trim={0 200 0 200},clip, width=\textwidth]{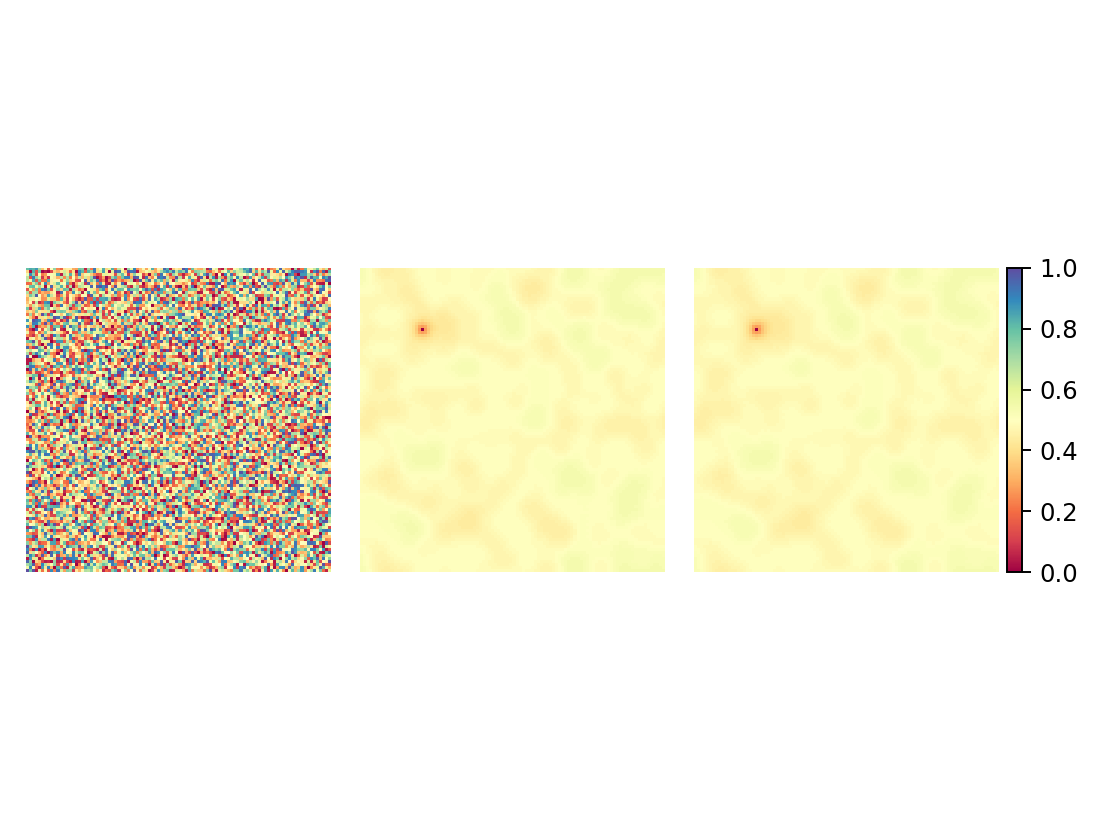}
			\caption{$\frac 1 {100}$-DeGroot}
			\label{fig:simulation granular}
		\end{subfigure}
		\begin{subfigure}[b]{0.8\textwidth}
			\centering
			\includegraphics[trim={0 200 0 200},clip,width=\textwidth]{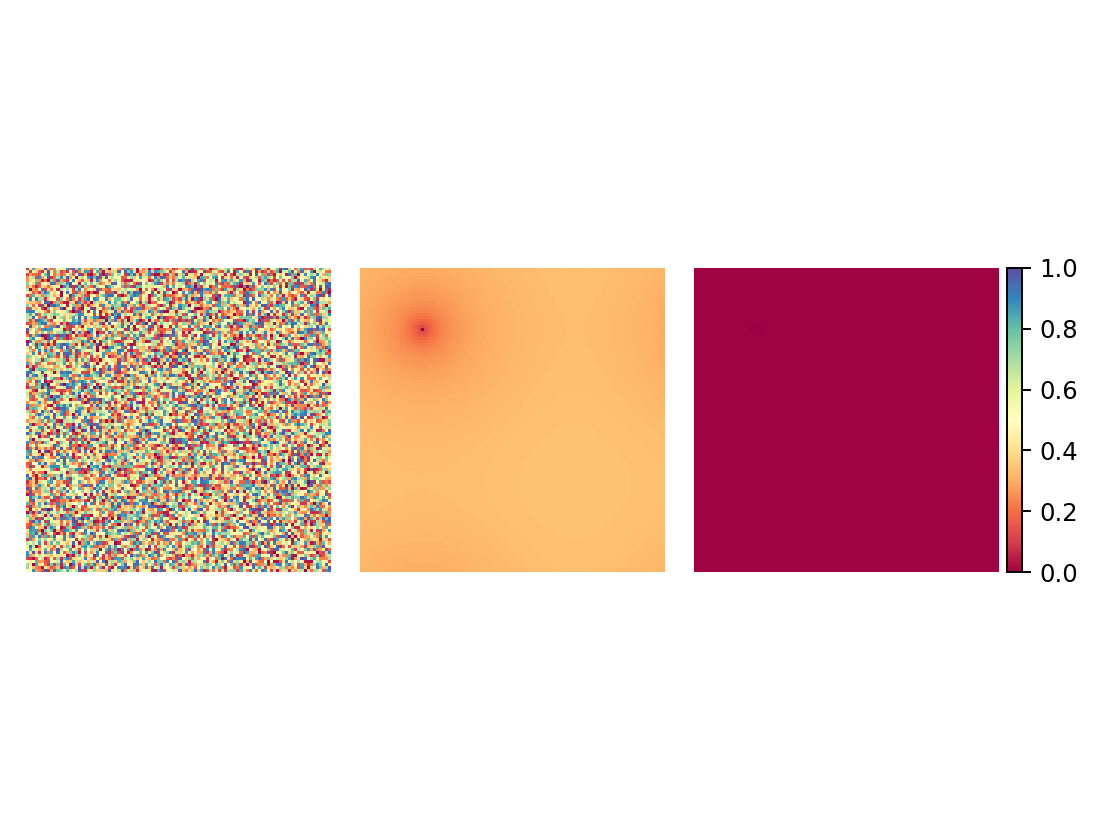}
			\caption{Standard DeGroot}
			\label{fig:simulation degroot}
		\end{subfigure}
		\caption{Simulation of two variants of the DeGroot dynamics. The underlying graph is the $100\times 100$ grid (with edges in all eight directions: N, NE, E,\ldots). Initial opinions drawn i.i.d.\ uniform $[0,1]$ except for a single stubborn agent with opinion 0. The three images (left to right) capture the opinions of all agents at times $0$, $10,000$, and $100,0000$.}
		\label{fig:simulation}
	\end{figure}
	
	The proofs of Theorems~\ref{theorem: main one graph}, \ref{theorem: main family} {and \ref{thm R}} follow from a more general result providing  sufficient conditions for a robust approximation of the DeGroot dynamics satisfying $(\delta,\rho)$-learning while being immune to {adversarial} agents and moderately distorted monitoring. In {Section~\ref{section status quo}}, we present a different dynamics, \emph{status quo biased DeGroot dynamics}, that satisfies the conditions of the general framework.
	
	The general framework is formalized in Theorems~\ref{theorem convergence} and \ref{theorem mu} in Section~\ref{section general framework}. In Section~\ref{section 1m} we show that the $\frac 1 m$-DeGroot dynamics satisfies the sufficient conditions and therefore Theorems~\ref{theorem convergence} and \ref{theorem mu} imply Theorems~\ref{theorem: main one graph} and \ref{theorem: main family}. In {Appendix}~\ref{section proof theorem conv} we prove Theorem~\ref{theorem convergence} and in {Appendix} ~\ref{section proof thm mu} we prove Theorem \ref{theorem mu}. 
	
	The rationale behind $\frac{1}{m}$-DeGroot is simple and it is compatible with the bounded rationality assumption of DeGroot dynamics. While DeGroot dynamics places no restrictions on the agents' opinions,  $\frac{1}{m}$-DeGroot dynamics restricts the agents' opinions to rational values of the form  $x\in\mathbb{Q}$ such that $mx\in\mathbb{N}$. This highly realistic restriction may be seen as a consequence
	of bounded rationality as in practice human capacity for accuracy is limited. Surprisingly, adding this natural layer of bounded rationality makes DeGroot dynamics highly robust. 
	\subsection{Related literature}
	Works that are related to ours are Yildiz et al. \cite{yildiz2013binary} and a recent note by Sadler \cite{sadler2020influence}. Their work studies a model of binary opinion dynamics based on the well-known \emph{voter model}. Under the classic voter model each agent holds a binary opinion and at every revision opportunity the agent chooses the opinion of a randomly chosen neighbor. As in this work, Yildiz et al. integrate stubborn agents that never change their opinion into the voter model. Unlike in the classic voter model, in the presence of stubborn agents the asymptotic consensus property is no longer satisfied.  Their main results characterize the asymptotic behavior and the opinion distribution of the agents as a function of the network and the opinions of the stubborn agents. The voter model has been introduced by Clifford and Sudbury \cite{clifford1973model}. The mathematical facets of the voter model have been extensively studied, e.g., by Holley and Ligget \cite{holley1975ergodic}. 
	
	The vulnerability of the DeGroot model to the presence of stubborn agents is demonstrated by Ghaderi and Srikant \cite{ghaderi2013opinion} and Acemouglu et al.\  \cite{acemoglu2010spread,acemouglu2013opinion}. In their model each agent, while revising her opinion,  may also assign a weight (which may be equal one) to her initial opinion at time zero. Their main contribution is to analyze how the presence of stubborn agents affects the limit opinion of the agents. In particular, they relate the limit opinion of the agents to the absorption probability of a random walk on the graph.\footnote{The connection between the Degroot model and random walk on a graph plays a significant role in our analysis as well.} In addition, they show how stubborn agents affect the speed at which agents' opinions converge to the limit opinion.        
	
	Finally, a series of recent papers analyzes the implications of Bayesian agents having misspecified beliefs about the environment in a social learning model. 
	Frick, Iijima, and Iishi \cite{frick2020misinterpreting} (see also \cite{frick2020stability}) provide conditions under which learning outcomes are not robust to minimal misspecifications. 
	In particular, they consider a population of Bayesian agents with a type-dependent utility who obtain information about the state of the world both from initial private signals and by observing a random sample of other agents' optimal actions over time. They show that even arbitrarily small amounts of misperception of the type distribution can generate extreme breakdowns of information aggregation, where in the long run all agents incorrectly assign probability one to some fixed state, regardless of the true realized state. 
	
	Our analysis here complements the above literature on stubborn agents by providing a robust variant of the leading non-Bayesian social learning rule in a way that is immune to the presence of stubborn {(and even adversarial)} agents. In addition, as mentioned above our learning dynamics is also immune to certain behavioral biases that can give rise to misspecifications.
	
	\section{Model and Preliminaries}
	We consider a model of opinion exchange in networks similar to that of DeGroot \cite{degroot1974reaching}. A connected graph $G=(V,E)$ (which is either finite or countably infinite) is given. Each agent $i\in V$ is represented by a vertex in the graph. Time is discrete. At time $t=0$ a state of the word $\mu\in \mathbb{R}$ is realized according to an unknown distribution $\psi\in\Delta(\mathbb{R})$.
	Thereafter, each agent {observes a noisy signal about the state of the world and sets it as its initial opinion. Specifically, the initial opinion of agent $i$ is $A_{i,0}:=\mu+x_i$, where $\{x_i\}_{i\in V}$ are zero-mean bounded (by a constant $|x_i|\leq K$) i.i.d.\ random variables}. For every agent $i$, let $N_i$ be the neighbors of agent $i$ and let $n_i=|N_i|$. Furthermore, let  $A_{i,t}\in\mathbb{R}$ be the opinion of agent $i$ at time $t\ge 0$. We assume that {$G$ has a bounded degree, namely,} there is a parameter $d>0$ such that $n_i\leq d$, for all $i\in V$.
	
	According to the DeGroot dynamics, at each time $t>0$, each agent $i$ updates her opinion by the rule
	$$A_{i,t}=\frac{1}{n_i}\sum_{j\in N_i} A_{j,t-1}:=\mathrm{Average_{j\in N_i}(A_{j,t-1})}.$$
	
	We first provide an analogue of the ``naive wisdom of crowds'' result of Golub and Jackson \cite{golub2010naive} for infinite graphs (see proof in the appendix).
	
	\begin{theorem}\label{theorem:golubjackson}
		Let $G$ be a connected infinite graph of bounded degree.
		For every agent $i\in V$ it holds that $\lim_{t\rightarrow\infty}A_{i,t}=\mu$ almost surely.
	\end{theorem}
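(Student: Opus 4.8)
The plan is to reduce the almost-sure convergence $A_{i,t}\to\mu$ to a statement about the random walk on $G$. Write $A_{i,t} = \sum_{j\in V} P^t(i,j)\,A_{j,0}$, where $P$ is the transition matrix of the simple random walk on $G$ (i.e. $P(i,j)=1/n_i$ for $j\in N_i$ and $0$ otherwise), so that $P^t(i,j)$ is the probability that the walk started at $i$ is at $j$ after $t$ steps. Since $A_{j,0}=\mu+x_j$ and the rows of $P^t$ sum to $1$, we get
\[
A_{i,t}-\mu \;=\; \sum_{j\in V} P^t(i,j)\,x_j \;=\; \mathbb{E}\bigl[x_{X_t^i}\,\big|\,\{x_j\}_{j\in V}\bigr],
\]
where $(X_t^i)_{t\ge 0}$ is the random walk started at $i$, independent of the signal noise $\{x_j\}$. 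So the theorem amounts to showing this weighted average of the i.i.d.\ bounded mean-zero variables $x_j$ tends to $0$ almost surely.

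First I would dispense with the finite case (where it does not hold as stated unless the graph is also infinite — indeed the hypothesis is that $G$ is infinite), and so assume $G$ is infinite with degrees bounded by $d$. The key probabilistic input is a second-moment estimate: conditionally on the noise, $\mathbb{E}\bigl[(A_{i,t}-\mu)^2\bigr] \le K^2 \sum_j P^t(i,j)^2 = K^2\,P^{2t}(i,i)\cdot(\text{reweighting})$ — more precisely one uses $\sum_j P^t(i,j)^2 \le \max_j P^t(i,j)$, and the return-probability/heat-kernel bound for random walks on infinite bounded-degree graphs gives $\max_j P^t(i,j)\to 0$ as $t\to\infty$. (For a graph that is not regular one works with the reversible measure $\pi(j)=n_j$ and the symmetric kernel $P^t(i,j)/\sqrt{n_in_j}$; boundedness of degrees keeps everything comparable up to the constant $d$.) This yields $A_{i,t}\to\mu$ in $L^2$, hence in probability.

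To upgrade to almost-sure convergence I would control the fluctuations along a full trajectory rather than a single time. One clean route: show the $L^2$ (or exponential) decay is fast enough — e.g. $\sum_j P^t(i,j)^2 = O(t^{-\alpha})$ for graphs of moderate growth, or at least summable along a subsequence $t_k=k^2$ — so that $\sum_k \mathbb{E}[(A_{i,t_k}-\mu)^2]<\infty$, giving a.s. convergence along $(t_k)$ by Borel–Cantelli; then fill the gaps using the contraction property that $\max_j A_{j,t}$ is non-increasing and $\min_j A_{j,t}$ non-decreasing in $t$ (the DeGroot update is an averaging operator), so oscillation between consecutive subsequence times is squeezed. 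Alternatively, a martingale argument: $M_t := x_{X_{T-t}^i}$-type backward construction, or more simply note $A_{i,t}-\mu = \mathbb{E}[x_{X_t^i}\mid \mathcal{F}]$ and apply a reverse-martingale / Lévy-type convergence theorem once one identifies the right filtration generated by the tail of the walk.

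The main obstacle is the last step: promoting $L^2$-convergence to almost-sure convergence. Convergence in probability is immediate from $\max_j P^t(i,j)\to 0$, but that decay can be slow (e.g.\ polynomially slow on $\mathbb{Z}$), so a naive Borel–Cantelli over all $t$ fails and one genuinely needs either a subsequence-plus-monotonicity argument exploiting that DeGroot averaging shrinks the range $[\min_j A_{j,t},\max_j A_{j,t}]$, or a quantitative heat-kernel bound together with a maximal inequality. I expect the paper invokes the monotone contraction of the interval of opinions — which is exactly the structural feature that the later $\varepsilon$-DeGroot analysis also leans on — to close this gap cleanly.
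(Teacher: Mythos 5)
Your decomposition $A_{i,t}-\mu=\sum_{j}P^t(i,j)\,x_j$ and the heat-kernel input $\max_j P^t(i,j)=\mathcal O(d/\sqrt t)$ are exactly the paper's starting point (the paper proves the latter via an evolving-sets argument, Lemma~\ref{lemma random walk}). But the step you flag as the ``main obstacle'' is where the proposal goes wrong, in two ways. First, your claim that ``a naive Borel--Cantelli over all $t$ fails'' is incorrect under the paper's hypotheses: the signals are \emph{bounded}, so instead of a second-moment/Chebyshev bound one applies Hoeffding's inequality to the weighted sum, giving
\[
\Pr\bigl(|A_{i,t}-\mu|>\delta\bigr)\;\le\;2\exp\!\Bigl(\frac{-\delta^2}{2K^2\sum_j P^t(i,j)^2}\Bigr)\;\le\;2\exp\!\bigl(-c\,\delta^2\sqrt t/d\bigr),
\]
since $\sum_j P^t(i,j)^2\le\max_j P^t(i,j)=\mathcal O(d/\sqrt t)$. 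The right-hand side is summable over \emph{all} $t$, so Borel--Cantelli closes the proof directly; this is precisely the paper's argument (Corollary~\ref{corollary hoeffding} plus Borel--Cantelli), with no subsequence or maximal inequality needed. The slow $t^{-1/2}$ decay of the variance is harmless because it sits inside an exponential.

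Second, the patch you propose instead does not work on infinite graphs. The monotonicity of $\sup_j A_{j,t}$ and $\inf_j A_{j,t}$ is true, but on an infinite graph this global range does not contract to a point: with i.i.d.\ signals it stays essentially the full support $[\mu-K,\mu+K]$ almost surely for all $t$, so it gives no control of the oscillation of a fixed agent between $t_k=k^2$ and $t_{k+1}$. A local version (controlling $A_{i,t}$ on $[t_k,t_{k+1}]$ by the opinions in a ball of radius $t_{k+1}-t_k\approx 2k$ around $i$ at time $t_k$) requires a union bound over up to $d^{2k}$ vertices, which your polynomial Chebyshev tail $\mathcal O(1/\sqrt{t_k})=\mathcal O(1/k)$ cannot beat. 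The reverse-martingale suggestion is left at the level of ``identify the right filtration'' and is not a proof. So the proposal has the right skeleton but a genuine gap at the almost-sure upgrade; the missing idea is simply to exploit boundedness of the signals via Hoeffding, which makes the crude heat-kernel bound strong enough for Borel--Cantelli over all times.
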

	Golub and Jackson consider a growing sequence of finite networks with the \emph{vanishing influence} property; i.e., the largest coordinate of the stationary-measure of the Markov chain induced by the network approaches zero as the network grows. The assumption of bounded degree applied to a growing sequence of networks implies the vanishing influence property. Therefore, Theorem~\ref{theorem:golubjackson} can be viewed as an analogue of Golub and Jackson's result for infinite graphs.
	
	There is, however, a difference between finite graphs and infinite graphs. When the graph is finite the convergence of DeGroot dynamics $A_{i,t}$ as $t\to\infty$ is guaranteed\footnote{Strictly speaking, it is guaranteed that  the limits $\lim_{t\to\infty}A_{i,2t}$ and $\lim_{t\to\infty}A_{i,2t+1}$ exist, and if in addition the graph is not bipartite then the two limits are equal.} for every value of the initial opinions $\{A_{i,0}\}_{i\in V}$. For infinite graphs, there may be initial values under which $A_{i,t}$ oscillates as $t\to\infty$ {(See an example in Appendix~\ref{section example divergence})}. A simple consequence of Theorem \ref{theorem:golubjackson} is that the set of such initial values has zero probability under our assumptions on the distribution of the initial opinions.  
	
	\subsection{The Fragility of DeGroot dynamics}\label{section fragility}
	In this section we demonstrate the known fact that in finite networks the DeGroot model and the wisdom of the crowd result of Golub and Jackson \cite{golub2010naive} are highly fragile to the presence of ``stubborn'' or ``biased'' agents.  {An agent is called} \emph{stubborn} if she sets her opinion to be constant $A_{i,t}\equiv c$ over time $t$. {More generally, an agent that updates her opinion arbitrarily, subject only to the restriction that the opinion must be within a bounded set
		is call \emph{adversarial}.}
	
	\begin{lemma}\label{lemma bots}
		Consider a connected finite graph $G=(V,E)$ and a unique stubborn agent $i_0\in V$ who sets her opinion to be constant $c$. Then for every agent $i\in V$ it holds that\footnote{For completeness, we provide the proofs of this and another standard result,  Lemma~\ref{lemma distortion} below, in the appendix.
		}
		$\lim_{t\rightarrow\infty} A_{i,t}=c$.
	\end{lemma}
	
	Thus, in order to achieve the wisdom of the crowd in the standard DeGroot model, all agents have to  comply with the dynamics; even a single stubborn agent is sufficient to dramatically swing the consensus  over to her opinion.
	
	Another form of fragility in learning dynamics is that of agents who suffer from a bias in aggregating others' opinions. For each agent $i\in V$ of degree $d_i=|N_i|$ one may view an updating heuristic at time period $t>0$ as a function $g_{i,t}:\mathbb{R}^{(d_i+1)(t-1)}\rightarrow\mathbb{R}$. Denote $(i):=N_i\cup\{i\}$ {and  $A_{(i),<t}:=A_{|(i)\times \{0,\ldots,t-1\}}$, namely,  the restriction of $A$ to $(i)\times \{0,\ldots,t-1\}$.} According to the heuristic $g=\{g_{i,t}\}$, agent $i$ is supposed to update her opinion such that
	\[
	A_{i,t}=g_{i,t}({A_{(i),<t}}).
	\]
	We say that $i$  suffers from \emph{$\beta$-distorted monitoring} with respect to $g$ if in every time period $t>0$ 
	$$A_{i,t}=g_{i,t}({A'_{(i),<t}}),$$
	for some ${A'_{(i),<t}}$, such that for every $0\leq s<t$,
	\[
	|A'_{j,s}-A_{j,s}|\leq
	\begin{cases}
		\beta& j\in N_i,\\
		0& j=i.
	\end{cases}
	\]
	Thus, agent $i$ updates her opinion in accordance with $g$ but  may perceive some of the opinions of her neighbors differently than they actually are. For example, she may add the value $\beta$ to the opinions of her neighbors, or, vice versa, subtract $\beta$ from the opinions of all of her neighbors. 
	
	The following lemma shows that no matter how small $\beta>0$ is, a single agent who suffers from $\beta$-distorted monitoring with respect to the DeGroot updating rule is sufficient to sway  public opinion away from the true state of the world to any other point.
	
	\begin{lemma}\label{lemma distortion}
		Consider a connected finite graph $G=(V,E)$. Let  $i_0\in V$, $A_{i,0}\in \mathbb R^V$, $\beta>0$,  $\underline l\leq \overline l$. Suppose one agent suffers from $\beta$-distorted monitoring with respect to DeGroot dynamics and all other agents follow DeGroot dynamics precisely. Then there exists a suitable distortion that leads to opinions $\{A_{i,t}\}_{i\in V,t\in\mathbb Z_+}$ that satisfy $\liminf_{t\to\infty}A_{i_0,t}=\underline l$ and $\limsup_{t\to\infty}A_{i_0,t}=\overline l$. 
	\end{lemma}
	
	\section{Main Results}\label{section main results}
	Our main goal is to introduce a robust version of DeGroot dynamics. We define a parameter-dependent family of heuristics.
	
	\begin{definition}\label{definition 1m DeGroot} The \emph{ $\frac 1m$-DeGroot} updating rule is defined as follows. In each time period $t>0$,
		each agent $i\in V$ observes  $A_{(i),t-1}$ and updates her opinion in three steps:
		\begin{enumerate}[(i)] 
			\item the observed opinions are rounded to the nearest value in the $\frac 1 m$-grid;
			\item the average of the rounded observed opinions is computed; 
			\item the result is rounded to the nearest value in the $\frac 1 m$-grid{, and that becomes the new opinion}.
		\end{enumerate}
		In steps (i) and (iii) ties between two alternatives are broken in favor of the one closer to $A_{i,t-1}$.
	\end{definition}
	
	Note that the definition of $\frac 1m$-DeGroot dynamics guarantees that the induced {opinions} take values in the $\frac 1m$-grid. Consequently, applying step (i) of the definition is needed only when considering distorted monitoring. In addition, two properties hold.  The first is   
	\begin{align*}
		A_{i,t}\in \arg\min\{|\tfrac{k}{m}-\mathrm{Average}_{j\in N_i}(A_{j,t-1})|: k\in\mathbb{Z}\},    
	\end{align*}
	where ties between two alternatives are broken in favor of the one that is closer to $A_{i,t-1}$. 
	The second is that any distortion smaller than $\frac 1 {2m}$ is eliminated by step (i). 
	
	To obtain our main results we make an assumption on the growth rate of the underlying network. To formulate this assumption we need the following definitions. We say that a graph $G=(V,E)$ is \emph{majorized by} a function $f\colon\mathbb R_+\to \mathbb R_+$ if for every $i\in V$ and every $r\in\mathbb N$, the ball of radius $r$ around $i$, denoted by $B(i,r)\subset V$, satisfies $|B(i,r)|\leq f(r)$. We say that $f$ has a \emph{stretched exponential} growth rate if there exist $\alpha>0$ and $r_0>0$ such that $f(r)>\exp(r^\alpha)$ for all $r\geq r_0$. We say that $f$ has a \emph{stretched sub-exponential} growth rate if for all $\alpha>0$, there exists $r_0>0$, such that $f(r)<\exp(r^\alpha)$, for all $r\geq r_0$. 
	We also say that $f$ is a stretched (sub-) exponential function to mean the same thing. {Our main results, Theorems~\ref{theorem: main one graph}, \ref{theorem: main family}, and \ref{thm R}, will assume that the underlying network is majorized by a stretched sub-exponential function. Other intermediate results hold under weaker assumptions.}
	
	{Due to the granularity of the opinions, we cannot have exact learning of the state of nature. Hence, we define the following notion of approximate learning.}
	
	\begin{definition}
		Consider a dynamics $A_{i,t}$ on a graph $G=(V,E)$ { and $\delta,\rho>0$}. We say that $i\in V$ satisfies $(\delta,\rho)$-\emph{learning} if 
		\[
		\Pr(\mu-\delta\leq \liminf_{t\rightarrow\infty}A_{i,t} \leq \limsup_{t\rightarrow\infty}A_{i,t}\leq \mu +\delta)\geq 1-\rho.
		\]
		We say that the dynamics satisfies $(\delta,\rho)$-\emph{learning} on $G$ if all but a finite set of agents satisfy $(\delta,\rho)$-learning. 
	\end{definition}
	We can now state our first main result.
	\begin{theorem}\label{theorem: main one graph}
		Let $G$ be an infinite connected graph of bounded degree majorized by a {{stretched}} sub-exponential function. For every $\delta,\rho>0$, there exists $m>0$ {(depending only on $\delta$, $\rho$, and the majorizing function)} such that $\frac 1 m$-DeGroot
		dynamics satisfies $(\delta,\rho)$-learning on $G$. 
		
		Furthermore, $(\delta,\rho)$-learning on $G$ continues to hold even in the presence of finitely many {adversarial} agents. 
	\end{theorem}
	
	We next take the approach of Golub and Jackson and define learning on a family of graphs. This approach enables us to extend our results to finite graphs since it applies to both finite and infinite graphs.
	We say that a family of graphs  $\mathcal{G}$ is majorized by a function $f\colon\mathbb R_+\to \mathbb R_+$ if every graph $G\in\mathcal{G}$ is majorized by $f$. In addition, we say that the family $\mathcal{G}$ has a bounded degree if there exists $d>0$ such that the degree of any agent in any graph $G\in\mathcal G$ is at most $d$.
	
	{Approximate learning on a family of graphs is defined as follows.}
	\begin{definition}
		Consider a family of graphs $\mathcal{G}$. Suppose that we have a family of dynamics, one on each graph $G\in\mathcal{G}.$  We say that the family of dynamics satisfies $(\delta,\rho)$-\emph{learning} on $\mathcal G$ if there exists a number $n$ such that for every $G\in\mathcal G$ all but $n$ agents satisfy $(\delta,\rho)$-learning.
	\end{definition}
	
	We are now ready to state our second main theorem, which generalizes our first main result to families of graphs.
	
	\begin{theorem}\label{theorem: main family}
		Let $\mathcal{G}$ be a family of connected graphs of bounded degree majorized by a stretched sub-exponential function. For every $\delta,\rho>0$, there exists $m>0$ {(depending only on $\delta$, $\rho$, and the majorizing function)} such that the $\frac 1 m$-DeGroot
		dynamics satisfies $(\delta,\rho)$-learning on $\mathcal{G}$. 
		
		Furthermore, $(\delta,\rho)$-learning on $\mathcal{G}$ continues to hold even in the presence of a bounded number of {adversarial} agents.  
	\end{theorem}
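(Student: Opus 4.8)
The plan is to derive Theorem~\ref{theorem: main family} from the proof of Theorem~\ref{theorem: main one graph} by tracking how the constants there depend on the graph. The point is that the argument behind Theorem~\ref{theorem: main one graph} is local and quantitative: given $\delta,\rho>0$ it fixes a single $\varepsilon=\varepsilon(\delta,\rho,d,f)>0$ and a single radius $R=R(\delta,\rho,d,f)$, and it shows that an agent $i$ satisfies $(\delta,\rho)$-learning as soon as the ball $B(i,R)$ is bot-free and contains sufficiently many vertices, using the ambient graph only through the degree bound $d$ -- which governs the $\varepsilon$-DeGroot update and the speed of propagation inside a ball -- and through the majorant $f$, which enters a single union bound. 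Since every $G\in\mathcal G$ obeys the same $d$ and the same $f$, the very same $\varepsilon$ works for all of them simultaneously; it then remains to bound, uniformly in $G$, the number of agents for which this local hypothesis fails.

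I would reuse the three ingredients of that local argument. First, with $M_t=\sup_{j}\max(A_{j,t},A_{j,t+1})$ and $m_t$ the analogous infimum, a short case analysis of the update rule shows that $M_t$ is non-increasing and $m_t$ non-decreasing, and -- precisely because $0.9\varepsilon<\varepsilon$ -- this survives $0.9\varepsilon$-distorted monitoring; hence opinions never leave the range spanned by the initial opinions and the bot values. Second, for every $\eta\ge\varepsilon$ the interval $[\mu-\eta,\mu+\eta]$ is invariant under one (possibly $0.9\varepsilon$-distorted) $\varepsilon$-DeGroot step, so once all opinions in $B(i,R)$ have entered $[\mu-\delta,\mu+\delta]$ the inner balls inherit this ring by ring and $i$ remains in $[\mu-\delta,\mu+\delta]$ thereafter. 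Third, on the event $\mathcal E_i$ that the averages of the initial opinions over the relevant sub-balls of $B(i,R)$ all lie within, say, $\delta/10$ of $\mu$, the dynamics is driven into $[\mu-\delta,\mu+\delta]$, while the stickiness of the rule confines the influence of a bot with opinion $c$ to a ball of radius $O(\log(|c-\mu|/\varepsilon))$. Finally, Hoeffding's inequality and a union bound give
\[
\Pr(\mathcal E_i^{\,c})\ \le\ |B(i,R)|\cdot 2e^{-c_1 N_0}\ \le\ f(R)\cdot 2e^{-c_1 N_0}\ <\ \rho,
\]
where $c_1>0$ is absolute and $N_0$ is the vertex-count threshold; this is the one place the stretched sub-exponential growth is used, since $\log f(R)=o(R)$ lets the exponential beat $f(R)$ once the parameters are chosen as in Theorem~\ref{theorem: main one graph}. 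Thus on $\mathcal E_i$ agent $i$ satisfies $(\delta,\rho)$-learning, and the only agents that may fail are those lying within distance $R':=\max\{R,\ C\log(2+\tfrac{1}{\varepsilon}\max_k|c_k-\mu|)\}$ of some bot; there are at most $b\cdot f(R')$ of these when there are at most $b$ bots.

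To cover all members of $\mathcal G$, including the finite ones, note: for infinite $G\in\mathcal G$ the claim is exactly Theorem~\ref{theorem: main one graph} with the uniform $\varepsilon$. For finite $G$ the argument is unchanged except for the requirement that the sub-balls be large enough; since $|B(i,r)|\ge\min(r+1,|V|)$ in any connected graph, choosing $R\ge N_0$ guarantees $|B(i,N_0)|\ge\min(N_0,|V|)$ for every $i\in G$, so when $|V|\ge N_0$ the proof goes through verbatim and at most $b\cdot f(R')$ agents fail, while when $|V|<N_0$ we simply put all (fewer than $N_0$) agents of $G$ into the exceptional set. In every case at most
\[
n:=\max\{N_0,\ b\cdot f(R')\}
\]
agents fail $(\delta,\rho)$-learning, and $n$ depends only on $\delta,\rho,d,f$ and on the bots (their number and the size of their opinions), not on the particular $G$ -- which is the assertion of Theorem~\ref{theorem: main family}.

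The substantive content sits in Theorem~\ref{theorem: main one graph}, so the main obstacle is to confirm that its proof is indeed local and graph-uniform in the above sense; within that, the delicate point is the genuine confinement of a bot's influence to a ball of radius only logarithmic in $|c-\mu|$, \emph{uniformly over the graph} -- on $\mathbb Z$, for instance, an exactly harmonic bounded perturbation would carry a bot's opinion to every vertex, and it is solely the $\varepsilon$-slack in the updating rule that truncates the tail of such a perturbation, so quantifying this truncation uniformly is the crux. A secondary point is reconciling the radius required for the concentration step with the radius required for the trapping (finite-speed-of-propagation) step, but since $\log f(R)=o(R)$ for stretched sub-exponential $f$ these are compatible for $\varepsilon$ small. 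One can also bypass part of the bookkeeping by a Benjamini--Schramm-type compactness argument: were no uniform $n$ to exist, a subsequence of rooted graphs $(G,i)$ with $i$ a failing agent far from all bots would converge to an infinite rooted graph majorized by $f$ on which the root fails $(\delta,\rho)$-learning, contradicting Theorem~\ref{theorem: main one graph} -- though this too hinges on continuity of the limiting opinion in the local geometry, i.e.\ on the same confinement fact.
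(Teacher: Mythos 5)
Your reduction of the family statement to a graph-uniform, local version of Theorem~\ref{theorem: main one graph} is exactly the paper's strategy: the paper proves Theorem~\ref{theorem auxiliary} (through the general Theorems~\ref{theorem convergence} and \ref{theorem mu}), in which $\varepsilon$ and $R$ depend only on $(\delta,\rho,d,f)$ and learning holds for every agent whose ball $B(i,R)$ is bot-free; Theorem~\ref{theorem: main family} then follows by counting the at most $b\cdot f(R)$ agents near a bot and putting all agents of any graph of radius less than $R$ into the exceptional set. The gap is that you \emph{assume} this uniform local statement rather than prove it. Theorem~\ref{theorem: main one graph} as stated only provides, for each single infinite graph, some $\varepsilon$ and some finite exceptional set, both a priori graph-dependent; verifying that its proof is ``local and graph-uniform'' is not bookkeeping, it is precisely the content of Theorem~\ref{theorem auxiliary}, and your own sketch of that local statement does not close. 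The two claims doing the real work --- that on $\mathcal E_i$ the dynamics is ``driven into'' $[\mu-\delta,\mu+\delta]$, and that afterwards the opinion of $i$ is trapped there by ring-by-ring invariance together with a confinement of a bot's influence to a radius $O(\log(|c-\mu|/\varepsilon))$ --- are asserted, not proved. The interval $[\mu-\delta,\mu+\delta]$ is invariant only for an agent all of whose relevant neighbors lie in it; agents near the boundary of $B(i,R)$ have uncontrolled neighbors, so a perturbation can move inward one ring per period and, for large $t$, reach $i$. The only thing that can stop it is the $\varepsilon$-dead-zone of the update, and quantifying that truncation uniformly over graphs is exactly the point you concede is unresolved; so the crux of the proof is missing.

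For comparison, the paper does neither of these things. Up to time $n\approx\delta/(3\varepsilon)$ it couples the $\varepsilon$-DeGroot dynamics with the standard DeGroot dynamics: monotonicity (A1) and $\varepsilon$-averaging (A2) give $|A_{i,n}-B_{i,n}|\le n\varepsilon\le\delta/3$, and $B_{i,n}$ is concentrated near $\mu$ by the random-walk estimate of Lemma~\ref{lemma random walk} together with Hoeffding's inequality (Corollary~\ref{corollary hoeffding}). From time $n$ onward it does not argue by invariant intervals at all: robustness (A3) feeds a Lyapunov function with geometrically decaying edge weights $(1-\gamma)^{d(i,e)}$, Lemma~\ref{lemma variation} bounds the entire future variation of $A_{i,t}$ by $\eta^{-1}L_{i,1-\gamma}(n)$, and a bot at distance at least $R\approx\gamma^{-1.00001}$ (polynomial, not logarithmic, in $1/\varepsilon$) contributes only $d(1-\gamma)^{R-R_1}f(R_1)$ to $\mathbb E[L'(n)]$; Markov's inequality finishes, and the exceptional radius is independent of the bot's opinion, whereas your bound $R'$ depends on $\max_k|c_k-\mu|$ and would not be uniform over $\mathcal G$ if bot values vary. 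Distorted monitoring is handled by sandwiching the distorted dynamics between the positive and negative $0.9\varepsilon$-biased dynamics via monotonicity (Lemma~\ref{lemma stubborn and bias reduction} and Section~\ref{section distorted monitoring}), not by your max/min envelope. Your Benjamini--Schramm compactness alternative inherits the same unproven confinement/continuity input, so it does not repair the argument.
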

	
	\section{Outline of the Technical Contribution}\label{section technical contribution}
	
	Our technical contribution builds upon and bridges two existing bodies of research: the theories of DeGroot and majority dynamics. According to majority dynamics agents hold binary opinions and update their opinions to match the majority among their neighbors while breaking ties in favor of one's own opinion. Majority dynamics is a special case of $\frac 1 m$-DeGroot dynamics with $m=1$. DeGroot dynamics is the other extreme, where $m\to\infty$. DeGroot dynamics is closely related to the voters' model, according to which an agent updates her opinion by adopting the opinion of a random neighbor. The relation between DeGroot dynamics and the voters' model is that the former is the expectation of the latter (see Section~\ref{section degroot}). Both majority dynamics and DeGroot dynamics (and the related voters' model dynamics) have been extensively studied in the past decades. In this section we outline how we build upon the different theories developed for each one of the two models while generalizing these theories, connecting them, and developing new techniques.

	DeGroot dynamics and the majority dynamics each has a different desirable feature. DeGroot dynamics converges to the average of the initial opinions of the entire population, thus leads to learning the true state of the world. The majority dynamics stabilizes and thus limits the influence of stubborn agents and distorted monitoring to time up to stabilization. The $\frac 1 m$-DeGroot dynamics balances these two features: as $m$ grows the accuracy of learning increases, while the time until stabilization increases as well and hence so does the influence of stubborn agents and distorted monitoring. The key technical contribution is quantifying these effects and tuning $m$ accordingly.

	Since Theorem~\ref{theorem: main one graph} is a special case of Theorem~\ref{theorem: main family}, it is sufficient to prove Theorem~\ref{theorem: main family}. Theorem~\ref{theorem: main family}, in turn, readily follows from the following theorem,  which roughly says that the influence of any stubborn agent is limited to a certain radius around her and outside that radius approximate learning takes place.
	
	\begin{theorem}\label{thm R}
		Let $f\colon\mathbb R_+\to\mathbb R_+$ be a stretched sub-exponential function, $d>0$, and $\delta,\rho>0$. There exist $m\in\mathbb Z_+$ and $R>0$
		such that $\frac 1 m$-DeGroot dynamics on any connected graph that is majorized by $f$, has a degree at most $d$, and a radius at least $R$ has the property that
		every agent $i$ such that $B(i,R)$ does not include an {adversarial} agent satisfies $(\delta,\rho)$-learning.
	\end{theorem}
	
	{The proof of Theorem~\ref{thm R} follows several steps.
		\begin{enumerate}[Step 1.]
			\item Reduction to the case where the adversarial agents are stubborn.
			\item Convergence of opinion in the presence of stubborn agents. This is proved using a Lyapunov function.
			\item Without stubborn agents, the limit opinion of each agent $i$ is close to $\mu$ with high probability. The analysis is done by carefully choosing a time $T$ such that: 
			\begin{enumerate}
				\item $T$ is small enough for the opinion of agent $i$ to stay close to $i$'s opinion  in standard DeGroot dynamics up to time $T$;
				\item $T$ is large enough so that $i$'s opinion in the standard DeGroot dynamics at time $T$ is close to $\mu$ with high probability;
				\item Using the Lyapunov function to show that the opinion of agent $i$ does not change much from time $T$ and on. This property holds even in the presence of stubborn agents.
			\end{enumerate}
			\item Away from stubborn agents, the limit opinion is close to $\mu$ with high probability. This is concluded by noting that steps (3a)--(3c) hold for any agent whose distance from any stubborn agent is greater than $T$.
		\end{enumerate}
		
		We next elaborate on the above steps. The full proof in further generality is given in Section~\ref{section general framework} and in the Appendix.
		\subsection{Reduction from adversarial to stubborn agents}
		Theorem~\ref{thm R} (and therefore all our other theorems) reduces to the case where the adversarial agents are stubborn. 
		To see this, consider two modifications of the subject dynamics $A_{i,t}$. One modification $\underline A_{i,t}$ is derived  from $A_{i,t}$ by replacing the adversarial agents with stubborn agents whose opinion is $\inf_{i\in V,t\in \mathbb N}A_{i,t}$, and the other modification $\overline{A}_{i,t}$ is similarly obtained with $\sup_{i\in V,t\in\mathbb N}A_{i,t}$. By induction on $t\geq 0$, we $\underline A_{i,t}\leq A_{i,t}\leq\overline {A}_{i,t}$, for every $i\in V$ and $t\in\mathbb N$. Therefore, Theorem~\ref{thm R} applied to $\underline{A}_{i,t}$ and $\overline A_{i,t}$ implies Theorem~\ref{thm R} for $A_{i,t}$.
		
		The proof of Theorem~\ref{thm R} will assume that the adversarial agents are stubborn without loss of generality. The first step of the proof is to establish convergence.}
	\subsection{Convergence of opinions}
	Majority dynamics and DeGroot dynamics both have the following convergence property, which we call \emph{alternate convergence}: the limits $\lim_{t\to \infty}A_{i,2t+\iota}$, $\iota\in \left\{0,1\right\}$ exist for any agent $i$ and any vector of initial opinions. DeGroot dynamics satisfies this property for the class of finite networks,\footnote{See \cite{degroot1974reaching} for the case of finite graphs and compare to Theorem~\ref{theorem:golubjackson} for infinite graphs, which has the additional requirement that the initial opinions are drawn i.i.d.} whereas majority dynamics satisfies it for the larger class of (possibly infinite) networks of sub-exponential growth rate\footnote{An infinite graph $G=(E,V)$ has a sub-exponential growth rate if for some (equivalently every) vertex $i\in V$ the function $f(r)=|B(i,r)|$ is sub-exponential, namely, $\lim_{r\to\infty}\frac 1 r\log f(r)=0$. This is a weaker assumption than the growth rate assumption of our main results in two respects: (i) the function is just sub-exponential rather than stretched sub-exponential;(ii) every vertex may have a different  growth function and the supremum of all these function need not be sub-exponential.} (see \cite{ginosar2000majority}). In this respect $\tfrac 1 m$-DeGroot resembles majority dynamics as we informally explain next. 
	
	A standard technique for establishing convergence for both majority and DeGroot dynamics is to consider a quantity called \emph{energy}. The key properties of energy are that it is (i) non-negative, and (ii) non-increasing as the dynamics evolves. Let us first consider a finite network $G=(V,E)$.  Classically, energy is defined as the sum of the squared differences of opinions between neighboring agents, \[\sum_{(i,j)\in E}(A_{i,t}-A_{j,t})^2.\]

	The idea of using energy is as follows. Energy as a function of the opinion of a single agent $i\in V$ (while keeping all other opinions fixed) is a quadratic polynomial that assumes its minimum at the average of the opinions of the neighbors of $i$, which we call the ``DeGroot value.'' Explicitly, up to a constant term, this polynomial is
	$P_{i,t}(X):=\sum_{j\in N_i}(X-A_{j,t})^2$. Therefore, when a single agent updates her opinion by the DeGroot rule the energy decreases. In fact, the energy decreases even when the opinion merely moves \emph{closer} to the DeGroot value, as is the case with majority dynamics and more generally with $\tfrac 1 m$-DeGroot dynamics.

	However, the energy, as classically defined above, is not guaranteed to decrease if all agents update their opinions simultaneously. To overcome this problem Ginosar and Holzman  \cite{ginosar2000majority} introduced the following variant of the energy function that relies on two consecutive time periods, \[\mathcal{E}(t):=\sum_{\substack{(i,j)\in V\times V :\\ \left\{i,j\right\}\in E}}(A_{i,t+1}-A_{j,t})^2.\]     
	The idea behind Ginosar--Holzman's energy is that it can be decomposed into sums of the above polynomials in two different manners (see Figure~\ref{fig:pt}), \begin{equation}\label{eq gh decomposition}
		\mathcal{E}(t)=\sum_{i\in V}P_{i,t}(A_{i,t+1})=\sum_{i\in V}P_{i,t+1}(A_{i,t}).   
	\end{equation}
	It follows that in order to show that $\mathcal{E}(t)\leq \mathcal{E}(t-1)$, it is sufficient to show that $P_{i,t}(A_{i,t+1})\leq P_{i,t}(A_{i,t-1})$ as illustrated in Figure~\ref{fig:et decreases}. The last inequality holds since $\tfrac 1 m$-DeGroot minimizes the distance to the DeGroot value along the $\tfrac 1 m$-grid.

	\begin{figure}
		\centering
		\begin{subfigure}[b]{0.5\textwidth}
			\centering     \resizebox{1\textwidth}{!}{%
				\begin{tikzpicture}
					\filldraw[black] (0,3) circle (2pt) node[anchor=south] {};
					\filldraw[black] (2,3) circle (2pt) node[anchor=south] {};
					\filldraw[black] (4,3) circle (2pt) node[above=3mm] {$A_{i,t+1}$};
					\filldraw[black] (6,3) circle (2pt) node[anchor=south] {};
					\filldraw[black] (8,3) circle (2pt) node[anchor=south] {};
					
					\filldraw[black] (0,0) circle (2pt) node[anchor=north] {};
					\filldraw[black] (2,0) circle (2pt) node[below=2mm] {$A_{i-1,t}$};
					\filldraw[black] (4,0) circle (2pt) node[anchor=north] {};
					\filldraw[black] (6,0) circle (2pt) node[below=2mm] {$A_{i+1,t}$};
					\filldraw[black] (8,0) circle (2pt) node[anchor=north] {};
					\filldraw[black] (8.25,0) circle (0.4pt);
					\filldraw[black] (8.5,0) circle (0.4pt);
					\filldraw[black] (8.75,0) circle (0.4pt);
					\filldraw[black] (8.25,3) circle (0.4pt);
					\filldraw[black] (8.5,3) circle (0.4pt);
					\filldraw[black] (8.75,3) circle (0.4pt);
					\filldraw[black] (-0.25,3) circle (0.4pt);
					\filldraw[black] (-0.5,3) circle (0.4pt);
					\filldraw[black] (-0.75,3) circle (0.4pt);
					\filldraw[black] (-0.25,0) circle (0.4pt);
					\filldraw[black] (-0.5,0) circle (0.4pt);
					\filldraw[black] (-0.75,0) circle (0.4pt);
					
					\draw (0,3)--(2,0);
					\draw (0,0)--(2,3);
					\draw (2,3)--(4,0);
					\draw (4,3)--(2,0);
					\draw (4,3)--(6,0);
					\draw (6,3)--(4,0);
					\draw (6,3)--(8,0);
					\draw (8,3)--(6,0);
					
					\draw [dotted] plot [smooth ] coordinates { (0,3.4) (2.2,0) (1.7,0) (0,2.3)};
					
					\draw [dotted] plot [smooth cycle] coordinates {(-0.3,0) (2,3.4) (4.2,0) (3.7,0) (2,2.3) (0.3,0)};
					
					\draw [dashed] plot [smooth cycle] coordinates {(1.7,0) (4,3.4) (6.2,0) (5.7,0) (4,2.3) (2.3,0)};
					
					\draw [dotted] plot [smooth cycle] coordinates{(3.7,0) (6,3.4) (8.2,0) (7.7,0) (6,2.3) (4.3,0)};
					
					\draw [dotted] plot [smooth ] coordinates{  (8,2.3) (6.3,0)(5.7,0)(8,3.4) };
				\end{tikzpicture}
			}%
			
			\caption{The dashed area corresponds to $P_{i,t}(A_{i,t+1})$.}
		\end{subfigure}
		\begin{subfigure}[b]{0.5\textwidth}
			\centering     
			\resizebox{1\textwidth}{!}{%
				\begin{tikzpicture}
					\filldraw[black] (0,3) circle (2pt) node[anchor=south] {};
					\filldraw[black] (2,3) circle (2pt) node[above=3mm] {$A_{i-1,t+1}$};
					\filldraw[black] (4,3) circle (2pt) node[above=3mm] {};
					\filldraw[black] (6,3) circle (2pt) node[above=3mm] {$A_{i+1,t+1}$};
					\filldraw[black] (8,3) circle (2pt) node[anchor=south] {};
					
					\filldraw[black] (0,0) circle (2pt) node[anchor=north] {};
					\filldraw[black] (2,0) circle (2pt) node[below=2mm] {};
					\filldraw[black] (4,0) circle (2pt) node[below=3mm] {$A_{i,t}$};
					\filldraw[black] (6,0) circle (2pt) node[below=2mm] {};
					\filldraw[black] (8,0) circle (2pt) node[anchor=north] {};
					\filldraw[black] (8.25,0) circle (0.4pt);
					\filldraw[black] (8.5,0) circle (0.4pt);
					\filldraw[black] (8.75,0) circle (0.4pt);
					\filldraw[black] (8.25,3) circle (0.4pt);
					\filldraw[black] (8.5,3) circle (0.4pt);
					\filldraw[black] (8.75,3) circle (0.4pt);
					\filldraw[black] (-0.25,3) circle (0.4pt);
					\filldraw[black] (-0.5,3) circle (0.4pt);
					\filldraw[black] (-0.75,3) circle (0.4pt);
					\filldraw[black] (-0.25,0) circle (0.4pt);
					\filldraw[black] (-0.5,0) circle (0.4pt);
					\filldraw[black] (-0.75,0) circle (0.4pt);
					
					\draw (0,3)--(2,0);
					\draw (0,0)--(2,3);
					\draw (2,3)--(4,0);
					\draw (4,3)--(2,0);
					\draw (4,3)--(6,0);
					\draw (6,3)--(4,0);
					\draw (6,3)--(8,0);
					\draw (8,3)--(6,0);
					
					\draw [dotted] plot [smooth ] coordinates { (0,-0.4) (2.2,3) (1.7,3) (0,0.7)};
					
					\draw [dotted] plot [smooth cycle] coordinates {(-0.3,3) (2,-0.4) (4.2,3) (3.7,3) (2,0.7) (0.3,3)};
					
					\draw [dashed] plot [smooth cycle] coordinates {(1.7,3) (4,-0.4) (6.2,3) (5.7,3) (4,0.7) (2.3,3)};
					
					\draw [dotted] plot [smooth cycle] coordinates{(3.7,3) (6,-0.4) (8.2,3) (7.7,3) (6,0.7) (4.3,3)};
					
					\draw [dotted] plot [smooth ] coordinates{  (8,0.7) (6.3,3)(5.7,3)(8,-0.4) };
				\end{tikzpicture}
			}%
			\caption{The dashed area corresponds to $P_{i,t+1}(A_{i,t})$.}
		\end{subfigure}
		\caption{\label{fig:pt}An illustration of Ginosar--Holzman's energy on the line graph $\mathbb Z$.}
	\end{figure}

	\begin{figure}
		\centering
		\resizebox{0.5\textwidth}{!}{%
			\begin{tikzpicture}
				\filldraw[black] (0,-3) circle (0.5pt) node[anchor=south] {};
				\filldraw[black] (2,-3) circle (0.5pt) node[above=3mm] {};
				\filldraw[black] (4,-3) circle (2pt) node[below=3mm] {$A_{i,t-1}$};
				\filldraw[black] (6,-3) circle (0.5pt) node[above=3mm] {};
				\filldraw[black] (8,-3) circle (0.5pt) node[anchor=south] {};
				\filldraw[black] (0,3) circle (0.5pt) node[anchor=south] {};
				\filldraw[black] (2,3) circle (0.5pt) node[anchor=south] {};
				\filldraw[black] (4,3) circle (2pt) node[above=3mm] {$A_{i,t+1}$};
				\filldraw[black] (6,3) circle (0.5pt) node[anchor=south] {};
				\filldraw[black] (8,3) circle (0.5pt) node[anchor=south] {};
				
				\filldraw[black] (0,0) circle (0.5pt) node[anchor=north] {};
				\filldraw[black] (2,0) circle (2pt) node[below=2mm] {};
				\filldraw[black] (4,0) circle (0.5pt) node[anchor=north] {};
				\filldraw[black] (6,0) circle (2pt) node[below=2mm] {};
				\filldraw[black] (8,0) circle (0.5pt) node[anchor=north] {};

				\draw (4,3)--(2,0);
				\draw (4,3)--(6,0);
				
				\draw [dotted] plot [smooth ] coordinates { (0,3.4) (2.2,0) (1.7,0) (0,2.3)};
				
				\draw [dotted] plot [smooth cycle] coordinates {(-0.3,0) (2,3.4) (4.2,0) (3.7,0) (2,2.3) (0.3,0)};
				
				\draw [loosely dashed] plot [smooth cycle] coordinates {(1.7,0) (4,3.4) (6.2,0) (5.7,0) (4,2.3) (2.3,0)};%
				
				\draw [dotted] plot [smooth cycle] coordinates{(3.7,0) (6,3.4) (8.2,0) (7.7,0) (6,2.3) (4.3,0)};
				
				\draw [dotted] plot [smooth ] coordinates{  (8,2.3) (6.3,0)(5.7,0)(8,3.4) };
				\draw (2,0)--(4,-3);
				\draw (6,0)--(4,-3);
				\draw [dotted] plot [smooth ] coordinates { (0,-3.4) (2.1,0) (1.8,0) (0,-2.3)};
				
				\draw [dotted] plot [smooth cycle] coordinates {(-0.3,0) (2,-3.4) (4.2,0) (3.7,0) (2,-2.3) (0.3,0)};
				\draw [loosely dashed] plot [smooth cycle] coordinates {(1.7,0) (4,-3.4) (6.2,0) (5.7,0) (4,-2.3) (2.3,0)};%
				\draw [dotted] plot [smooth cycle] coordinates{(3.7,0) (6,-3.4) (8.2,0) (7.7,0) (6,-2.3) (4.3,0)};
				\draw [dotted] plot [smooth ] coordinates{  (8,-2.3) (6.3,0)(5.7,0)(8,-3.4) };
				\draw [->,densely dashed, red, thick, ] (4,-2.8) to [out=60,in=-60] (4,2.8);
			\end{tikzpicture}
		}%
		\caption{\label{fig:et decreases}A sufficient condition for Ginosar--Holzman's energy not to increase is that  $A_{i,t+1}$ is always closer to $g_D$ compared to $A_{i,t-1}$.}
	\end{figure}

	More generally, let $g_D$ be the DeGroot value (of agent $i$ at time $t$). Any dynamics that satisfies $|A_{i,t+1}-g_D|\leq |A_{i,t-1}-g_D|$ or, equivalently, \[0\leq (A_{i,t-1}-g_D)^2-(A_{i,t+1}-g_D)^2,\]
	for every agent $i$ at any time $t$, has  non-increasing energy. In order to establish convergence, we need a slightly stronger condition that will prevent the possibility that $A_{i,t-1}$ and $A_{i,t+1}$ bounce back and forth around $g_D$. The following condition serves this purpose 
	\begin{equation}\label{eq eta}
		\eta\cdot|A_{i,t-1}-A_{i,t+1}|\leq (A_{i,t-1}-g_D)^2-(A_{i,t+1}-g_D)^2,
	\end{equation} 
	for some $\eta>0$. Note that $\tfrac 1 m$-DeGroot dynamics does not satisfy \eqref{eq eta} since it may happen that $A_{i,t-1}$ and $A_{i, t+1}$ are at exactly the same distance from $g_D$ while $A_{i,t-1}\neq A_{i, t+1}$ (since the tiebreaker is $A_{i,t}$ rather than $A_{i,t-1}$). Ginosar and Holzman  tackled this issue {(}in the context of majority dynamics{)} by introducing self-loops{\footnote{A self-loop is an edge from a vertex to itself.}} at vertices of even degree{. This modification prevents ties while having the same effect as resorting to $A_{i,t}$ as a tiebreaker. Thus, the dynamics remains unchanged. The only change is in the energy function which becomes strictly decreasing whenever an agent changes its opinion.} In the more general context of $\frac 1 m$-DeGroot dynamics, we apply an analogous solution that involves generalizing the situation to weighted graphs and adding auxiliary self-loops of small weight.   
	
	We turn to consider infinite networks of sub-exponential growth rate. The problem with infinite networks is that the energy may be infinite. Tamuz and Tesler \cite{tamuz2015majority} resolve{d} this problem by using a weighted version of energy, introducing the following Lyapunov function,  \[L(t):=\sum_{\substack{(i,j)\in V\times V :\\ \left\{i,j\right\}\in E}}w(i,j)(A_{i,t+1}-A_{j,t})^2,\]
	where $\sum_{i,j}w(i,j)<\infty$ and the ratio $\tfrac{w(i,j)}{w(j,k)}$ is close to one. 
	
	The analogous decomposition of the Lyapunov function involves analogous polynomials of the form $Q_{i,t}(X):=\sum_{j\in N_i}w(i,j)(X-A_{j,t})^2$ whose minimum is no longer attained at the DeGrrot value itself but rather at a nearby point. This change requires changing condition \eqref{eq eta} into the following condition, 
	\begin{equation}\label{eq gamma eta} 
		\eta\cdot|A_{i,t-1}-A_{i,t+1}|\leq (A_{i,t-1}-v)^2-(A_{i,t+1}-v)^2, 
	\end{equation} 
	for every $v\in [g_D-\gamma,g_D+\gamma]$, for some $\gamma, \eta>0$. 
	
	$\tfrac 1m$-DeGroot dynamics on a graph of bounded degree satisfies condition~\eqref{eq gamma eta} with $\gamma,\eta=\tfrac{1}{\mathcal{O}(m)}$ (see Lemma~\ref{lemma 1m robustness}).
	More generally, {we call} a dynamics that satisfies condition \eqref{eq gamma eta} \emph{$(\gamma, \eta)$-robust}. {We call a} dynamics \emph{robust} if it is $(\gamma,\eta)$-robust for some $\gamma, \eta>0$.
	
	Robustness is a sufficient condition for alternate convergence of bounded dynamics on graphs of sub-exponential growth rate (see Proposition~\ref{proposition convergence}). The reason for that is that on such graphs it is possible to define Lyapunov functions $L_i(t)$, one for each agent $i$, such that the $(\gamma,\eta)$-robustness guarantees the following properties:
	\begin{enumerate}[(i)]
		\item $L_i(t)$ is non-negative and non-increasing in $t$ (namely, $0\leq L_i(t+1)\leq L_i(t)$, for every $t\geq 0$);
		\item every time the opinion of agent $i$ changes by $x$ between two other time periods, the function $L_i(t)$ decreases by at least $\eta x$ (namely, $L_i(t-1)-L_i(t)\geq\eta|A_{i,t-1}-A_{i,t+1}|$). 
	\end{enumerate}
	These properties imply that for any $\iota\in\{0,1\}$, the variation of $\{A_{i,2t+\iota}\}_{t=0}^\infty$ is bounded from above (by $L_i(0)$) and, therefore, $A_{i,2t+\iota}$ converges as $t\to\infty$.
	
	The second part of the proof of Theorem~\ref{thm R} shows that approximate learning occurs.
	
	\subsection{Approximate learning}
	
	In this part we show that the limiting opinion of any agent who is sufficiently far from any stubborn agent is close to the true state of the world, $\mu$, with probability close to one. 
	
	In addition to robustness, the following two properties of $\tfrac 1m$-DeGroot dynamics will play a significant role in the proof. 
	
	\begin{itemize}
		\item \emph{monotonicity}: the opinion of agent $i$ at time $t$ is a monotonic function of the history up to time $t$; and
		\item \emph{$\tfrac 1m$-averaging}:
		$|A_{i,t+1}-g_D|\leq\tfrac 1m$.
	\end{itemize} 
	
	We sketch the outline of the proof. Fix an agent $i$ and consider the dynamics of $A_{i,t}$ induced by $\tfrac 1m$-DeGroot updating rule. The analysis is divided into two stages: up to some time $T$ (which has to be carefully determined) and from time $T$ onward. 
	
	For up to time $T$, we compare $A_{i,t}$ to standard DeGroot dynamics $D_{i,t}$. The two dynamics are coupled via sharing the same initial opinions. On the one hand, monotonicity and $\tfrac 1m$-averaging guarantee that $|A_{i,T+\iota}-D_{i,T+\iota}|\leq \tfrac{T+\iota}{m}$ (which is small for $T$ that is not too large compared to $m$). On the other hand, analysis of DeGroot dynamics shows that $D_{i,T+\iota}$ is close to $\mu$ with probability close to one (as long as $T$ is large enough). By carefully choosing $T$, we guarantee that both conditions hold, and thus $A_{i,T+\iota}$ is close to $\mu$ with probability close to one. 
	
	It remains to verify that from time $t=T$ onward $A_{i,t}$ does not change much. To this end, we slightly modify $A_{T+\iota}=(A_{j,T+{\iota}})_{j\in V}$ upwards and downwards to obtain $A'_{T+\iota}\leq A_{T+\iota}\leq A''_{T+\iota}$ such that $|A'_{i,T+\iota}-A''_{i,T+\iota}|$ is small with probability close to one. In addition, $A'_{T+\iota}$ and $A''_{T+\iota}$ are defined such that the respective Lyapunov functions $L_i'(T)$ and $L_i''(T)$ are small with probability close to one. For times $t>T$, we define $A'_t$ and $A''_t$ by assuming the $\tfrac 1m$-DeGroot updating rule. The monotonicity property implies that $A'_{i,t}\leq A_{i,t}\leq A''_{i,t}$, for any $t\geq T$; therefore the  limits
	\begin{align*}
		Z_i &= \lim_{t\to\infty}(A_{i,2t+\iota})_{\iota=0,1},\\
		Z'_i &= \lim_{t\to\infty}(A'_{i,2t+\iota})_{\iota=0,1},\text{ and}\\
		Z''_i &= \lim_{t\to\infty}(A''_{i,2t+\iota})_{\iota=0,1}\\
	\end{align*}
	satisfy $Z_i'\leq Z_i\leq Z_i''$ (the existence of the limits is guaranteed by Proposition \ref{proposition convergence}).
	The same argument as in the proof of Proposition \ref{proposition convergence} implies that  the variations of $A'_{i,2t+\iota}$ and $A''_{i,2t+\iota}$ are proportional to the respective  values of the Lyapunov functions $L_i'(T)$ and $L_i''(T)$. Consequently,  $Z'_{i}$ and $Z''_{i}$ are close to $(A'_{i,T+\iota})_{\iota=0,1}$ and $(A''_{i,T+\iota})_{\iota=0,1}$, respectively. Since $(A_{i,T+\iota})_{\iota=0,1}$ is close to $(\mu,\mu)$ with probability close to one, so are $(A'_{i,T+\iota})_{\iota=0,1}$ and $(A''_{i,T+\iota})_{\iota=0,1}$, and hence $Z'_i$ and $Z''_i$ are close to $(\mu,\mu)$; therefore so is $Z_i$, as desired.

	\subsection{Distorted monitoring }
	
	The above proof of Theorem~\ref{thm R}  relies on the three key properties of $\tfrac 1m$-DeGroot dynamics: (i) monotonicity, (ii) approximate averaging, (iii) robustness (Inequality~\eqref{eq gamma eta}). In fact, the arguments of the proof show that any updating rule that satisfies these properties induces approximate learning (the formal statement appears in Theorem~\ref{theorem mu}). An example of a different dynamics that fits into our framework is the \emph{status quo-biased DeGroot} dynamics discussed in Section~\ref{section status quo}.
	
	As mentioned in the introduction, the $\tfrac 1m$-DeGroot updating rule eliminates $\beta$-distorted monitoring for $\beta<\tfrac{1}{2m}$ and therefore it is immune to such distortion. Although not all updating rules that satisfy (i)--(iii) eliminate the distortion, they are still immune to $\beta$-distorted monitoring\footnote{See Theorem~\ref{theorem mu}.} for small enough $\beta$, as explained next.  
	
	First note that any updating rule that satisfies (i)--(iii) without distortion still satisfies (ii) and (iii) with a small enough distortion. However, it need not  satisfy (i). Therefore, the proof sketched above does not apply to it directly. To see why monotonicity may fail with the presence of distorted monitoring, consider two histories $A_{<t}$ and $B_{<t}$ such that $A_{i,s}$ is slightly smaller than $B_{i,s}$ for all $i\in V$ and $s<t$. An agent who suffers distorted monitoring may still view $A_{i,s}$ as larger than $B_{i,s}$, and therefore, even if her updating rule would be monotonic without the distortion it is no longer monotonic with the distortion.
	
	To overcome this difficulty, we consider two special cases of $\beta$-distorted monitoring that do preserve monotonicity. These special cases are the two extremes of distorted monitoring: the positive and the negative $\beta$-biased monitoring. Under positive $\beta$-biased monitoring agents overestimate the opinions of their neighbors; namely, they perceive them as $\beta$ higher than their actual value. Similarly, negative $\beta$-biased agents underestimate the opinions of their neighbors by $\beta$.
	
	Let $A_{i,t}$ be a dynamics induced by an updating rule satisfying (i)--(iii) with $\beta$-distorted monitoring on a {network} that satisfies the conditions of Theorem~\ref{thm R}. We couple $A_{i,t}$ with the negative and the positive $\beta$-biased dynamics,  $A^-_{i,t}$ and $A^+_{i,t}$, by sharing the same initial opinions. By the monotonicity property, we have $A^-_{i,t}\leq A_{i,t}\leq A^+_{i,t}$, for every agent $i$ and time $t\geq 0$. Since $A^-_{i,t}$ and $A^+_{i,t}$ satisfy (i)--(iii) they both satisfy approximate learning, and therefore, so does $A_{i,t}$.

	\section{General Framework}\label{section general framework}
	We present a general framework within which our main results hold. We provide sufficient conditions for a dynamics to satisfy $(\delta,\rho)$-learning. In addition, the underlying network may be a weighted graph rather than just a  graph.\footnote{A weighted graph is a graph with non-negative weights on its edges and possibly with self-loops.} 
	$\frac 1m$-DeGroot dynamics will turn out to be a special case of a more general family of dynamics on a weighted graph. We first provide a concentrated list of notations and other conventions that will be used in the statement of the general results, and subsequently during their proofs.
	
	\subsection{List of notations}
	
	\begin{itemize}
		
		\item The social network is a connected weighted graph denoted by $(G,W)$, where $G=(V,E)$ is a connected  graph, and  $W\colon V\times V\to \mathbb R_+$ is such that
		\begin{itemize}
			\item $W(i,j)=W(j,i)$, for all $i,j\in V$,
			\item $W(i,j)>0$ $\Rightarrow$ $ij\in E$ or $i=j$.
		\end{itemize}
		
		\item We assume that the weights of the edges in $E$ are bounded away from zero, and by normalization $W(i,j)\geq 1$, for all $ij\in E$.
		
		\item We denote $W(i):=\sum_{j\in V} W(i,j)$ and assume that $\sup_{i\in V} W(i)<\infty$.
		
		\item The neighbors of agent $i$ form the set $N_i$ and $(i):=\{i\}\cup N_i$.
		
		\item The opinion of agent $i\in V$ at time $t\geq 0$ is denoted by $A_{i,t}$.
		
		\item By conditioning on the state of the world $\mu$ and re-scaling the opinions, we assume w.l.o.g. that $A_{i,t}\in [0,1]$, for all $i\in V$ and $t\geq 0$. Unless otherwise specified, we assume that $\{A_{i,0}\}_{i\in V}$ are i.i.d.\ with expectation $\mu$. 
		
		\item The set of all possible histories is denoted by $\mathcal H$. Formally, $\mathcal H$ is defined as the set of all functions $A\colon V\times\mathbb Z_+\to [0,1]$.
		
		\item Each agent $i$ sees only her own opinion and those of her neighbors. For a history $A\in\mathcal H$ and agent $i$ and time $t>0$, the restriction of $A$ to $(i)\times \{0,\ldots,t-1\}$ is denoted by $A_{(i),<t}$. The set of all \emph{private histories of agent $i$ up to time $t$} is denoted by $\mathcal H_{(i),<t}:=\{A_{(i),<t}:A\in \mathcal H\}$.
		
		\item At time $t>0$, each agent $i$ updates her opinion according to an \emph{updating function} $g_{i,t}\colon \mathcal H_{(i),<t}\to [0,1]$. Namely, $A_{i,t}=g_{i,t}(A_{(i),<t})$. An \emph{updating rule} is a scheme of updating functions $\{g_{i,t}\}_{i\in V,t>0}$.
		
		\item[] We consider two specific updating rules defined next.
		\begin{itemize}
			\item The \emph{DeGroot} updating rule, denoted by $g_D\colon\mathcal H_{(i),<t}\to[0,1]$, is defined by
			\[
			g_D(A_{(i),<t})=g_D(A_{(i),t-1}):=\frac{1}{W(i)}\sum_{j\in V} W(i,j)A_{j,t-1}\ .
			\]
			
		\end{itemize}
		
		Let $M$ be a finite subset of $[0,1]$. The \emph{$M$-granular DeGroot} ($M$-DeGroot for short) updating function of agent $i$ at time $t>0$ is defined by first projecting the opinions of the neighbors on $M$, then applying the DeGroot updating function, and finally projecting the result on $M$. Formally, 
		define the projection function  $\pi\colon M\times[0,1]\to M$ by setting $\pi({\mathsf {m^*}},x)$ to be the closest element to $x$ in $M$, where ties between two alternatives are broken in favor of the one that is closer to ${\mathsf {m^*}}$. With a slight abuse of notation we extend $\pi$ to a function $\pi\colon M\times [0,1]^d\to M^d$ by applying it coordinate by coordinate.
		The $M$-granular DeGroot updating rule is defined by
		\[
		A_{i,t}=\pi(A_{i,t-2},g_D(\pi({\mathsf {m^*}},A_{(i),t-1}))),
		\]
		where ${\mathsf {m^*}}\in M$ is an arbitrary monotonic function of $A_{(i),<t}$. For example, ${\mathsf {m^*}}$ can be a constant element of $M$.
		
		Since the $M$-granular DeGroot updating function takes values in $M$, it always holds that $A_{(i),t-1}\in M^{(i)}$ and, therefore,  $\pi({\mathsf {m^*}},A_{(i),t-1})=A_{(i),t-1}$, regardless of the choice of ${\mathsf {m^*}}$. The purpose of this projection is to eliminate $\beta$-distorted monitoring, for any $\beta$ sufficiently small. The choice of the tiebreaker ${\mathsf {m^*}}$ is immaterial, since for $\beta$ small enough ties never occur here.
		
		Note that the tiebreaker in the outer projection is $A_{i,t-2}$ rather than $A_{i,t-1}$. The reason for that is the property of robustness that will be discussed in this section which is related to Inequality~\eqref{eq gamma eta}. 
		\begin{itemize}
			\item The $\frac 1m$-DeGroot updating function as defined in Definition~\ref{definition 1m DeGroot} can be alternatively defined with the above notation as
			\[
			A_{i,t}=\pi(A_{i,t-1},g_D(\pi(A_{i,t-1},A_{(i),t-1})).
			\]
		\end{itemize}
		$\frac 1 m$-DeGroot dynamics is similar to the special case of the $M$-granular DeGroot dynamics where \[M=\{0,\tfrac 1 m,\tfrac 2 m,\ldots,1\},\ 
		{\mathsf {m^*}}=A_{i,t-1},
		\] except that the second tiebreaker is $A_{i,t-1}$ instead of $A_{i,t-2}$. In order to bridge this difference, we turn the original underlying graph into a weighted graph {with additional edges between each vertex to itself, a.k.a.\ self-loops}. The weights of the weighted graph are 1 on the edges of the original graph and a small $a>0$ on all self-{loops}. The effect of this modification, for any $a>0$ small enough, is that the self-edge breaks potential ties in favor of $A_{i,t-1}$ instead of the formal tiebreaker, $A_{i,t-2}$. Therefore, the $M$-granular updating {rule} on the modified weighted graph induces the same dynamics as the $\frac 1m$-DeGroot updating {rule} on the original graph.
	\end{itemize}
	
	\subsection{Generalized results}
	
	The general framework encompasses any updating rule that satisfies the following three properties.
	\begin{definition}
		Let $\varepsilon,\gamma,\eta>0$, {$i\in V$, and $t>0$}. An updating {function}  $g_{i,t}\colon\mathcal H_{(i),<t}\to[0,1]$  is \emph{$(\varepsilon,\gamma,\eta)$-robust} if the following conditions are satisfied.
		
		\begin{enumerate}
			\item[(A1)] \emph{Monotonicity.}
			For any $A_{(i),<t},A_{(i),<t}'\in\mathcal H_{(i),<t}$ such that $\forall j,~\forall s<t,~A_{j,s}\leq A'_{j,s}$,
			\[
			g_{i,t}(A_{(i),<t})\leq g_{i,t}(A'_{(i),<t}).
			\]
			\item[(A2)] \emph{Approximate averaging}. For any $A_{(i),<t}\in\mathcal H_{(i),<t}$,
			\[
			|g_{i,t}(A_{(i),<t})-g_D(A_{(i),<t})|\leq \varepsilon.
			\]
			\item[(A3)] \emph{Robustness}. $\forall A_{(i),<t}\in\mathcal H_{(i),<t}$, let 
			\begin{align*}
				g_D&=g_D(A_{(i),<t}),\\
				g_{i,t}&= g_{i,t}(A_{(i),<t}).
			\end{align*}
			The third condition states that $\forall v\in [g_D-\gamma, g_D+\gamma]$,
			\[
			\eta|g_{i,t}-A_{i,t-2}|\leq \left(A_{i,t-2}-v\right)^2 - \left(g_{i,t}- v\right)^2.
			\]
		\end{enumerate}
	\end{definition}
	
	{
		Condition (A3) ensures that the variation of the opinion at any update is bounded from above by the decrease in the energy. This property plays a key role in the convergence of the opinions. Further motivation is given around Inequality~\eqref{eq gamma eta}.
		
		The robustness notion naturally extends from updating functions to updating rules (schemes of updating functions) and to the dynamics they induce. The following definition spells out this extension explicitly.
		\begin{definition}
			An updating rule $\{g_{i,t}\}_{i\in V,t>0}$ is $(\varepsilon,\gamma,\eta)$-robust if $g_{i,t}$ is $(\epsilon,\gamma,\eta)$-robust, for every $i\in V$ and $t>0$.
			
			A dynamics induced by an $(\varepsilon,\gamma,\eta)$-robust updating rule is called an \emph{$(\varepsilon,\gamma,\eta)$-robust dynamics}. 
			
			A dynamics induced by an $(\varepsilon,\gamma,\eta)$-robust updating rule and a set of stubborn agents and/or $\beta$-distorted monitoring is called an \emph{$(\varepsilon,\gamma,\eta)$-robust dynamics with stubborn agents and/or $\beta$-distorted monitoring}.
		\end{definition}
		
		Note that the standard DeGroot updating rule satisfies Conditions (A1) and (A2) but not (A3).
	}
	Also, note that Conditions (A2) and (A3) can only be satisfied if $\gamma\leq \varepsilon$.
	
	Condition (A3) does not impose any condition on the initial opinions $\{A_{i,0}\}_{i\in V}$. Furthermore, the predicate of Condition (A3) is defined on any history in $\mathcal H$, not just those induced by some updating rule. 
	{
		
		In the general framework, it is necessary to consider such inconsistent histories since such histories that never occur may still be perceived due to distorted monitoring. 
		
		In the special case of $\frac 1 m$-DeGroot updating rule, Condition (A3) is \emph{not} satisfies as is. However, for $\frac 1 m$-DeGroot, it is sufficient to consider consistent histories only, since $\frac 1 m$-DeGroot eliminates distorted monitoring. To this end, we introduce a weakening of the robustness definition.
		
		\begin{definition}
			\label{definition: weak robustness}
			Let $g=\{g_{i,t}\}_{i\in V,t>0}$ be an updating rule and $\mathcal H^g$ the set of histories consistent with $g$. We say that $g$ is $(\varepsilon,\gamma,\eta)$-weakly robust if, for every $i\in V$ and $t>0$, $g_{i,t}$ satisfies \begin{enumerate}
				\item[(A1)] \emph{Monotonicity.}
				\item[(A2)] \emph{Approximate averaging}. 
				\item[(A3')] \emph{Weak robustness}. $\forall A_{(i),<t}\in\mathcal H^g_{(i),<t}$, let 
				\begin{align*}
					g_D&=g_D(A_{(i),<t}),\\
					g_{i,t}&= g_{i,t}(A_{(i),<t}).
				\end{align*}
				Then, $\forall v\in [g_D-\gamma, g_D+\gamma]$,
				\[
				\eta|g_{i,t}-A_{i,t-2}|\leq \left(A_{i,t-2}-v\right)^2 - \left(g_{i,t}- v\right)^2.
				\]
			\end{enumerate}
		\end{definition}
	}
	
	\begin{definition}
		For $i\in V$, we define $Z_i\in[0,1]\times[0,1]$ as long as the following limit exists, 
		\[
		Z_i:=(\lim_{t\to\infty} A_{i,2t+\iota})_{\iota=0,1}.
		\]
	\end{definition}
	
	The following theorem says that $Z_i$ is well defined under Condition (A3).
	\begin{theorem}\label{theorem convergence}
			
		{
			Let $(G=(V,E),W)$ be a weighted graph, $g=\{g_{i,t}\}_{i\in V,t>0}$  an updating rule, and $A=\{A_{i,t}\}_{i\in V,t\geq 0}$ a dynamics on $G$ 
			induced by $g$ with any set of stubborn agents and with $\beta$-distorted monitoring. The conjunction of the following conditions implies that all the limits $Z_i$ exist, for every $i\in V$.
			\begin{itemize}
				\item $G$ has a sub-exponential growth rate;
				\item $\sup_{i\in V}W(i)<\infty$;
				\item Either $\beta\geq 0$ and $g$ satisfies Condition (A3) or $\beta  = 0 $ and $g$ satisfies Condition (A3') w.r.t.\ some $\gamma >\beta$ and $\eta>0$.
			\end{itemize}
		}
	\end{theorem}
	{
		Note that Theorem~\ref{theorem convergence} does not make  any assumption regarding the initial opinions $\{A_{i,0}\}_{i\in V}$ which can be arbitrary numbers in $[0,1]$.
	} The proof of Theorem~\ref{theorem convergence} is in {Appendix}~\ref{section proof theorem conv}.
	
	The next theorem sets conditions under which $Z_i$ is close to $(\mu,\mu)$ with a probability close to one.
	{
		
		Let $G=(V,E)$ be a graph. For $i,j\in V$ we write $\mathrm{dis}_G(i,j)$ for the graph distance between $i$ and $j$ in $G$. The diameter of $G$ is defined as $\mathrm{dim}(G):=\sup_{i,j\in V} \mathrm{dis}_G(i,j)$. For $S\subset V$ we define 
		\[
		\mathrm{dis}_G(i,S):=\begin{cases}
			\min_{j\in S}{\mathrm{dis}_G(i,j)} & S\neq\emptyset,\\
			\mathrm{dim}(G)+1 & S=\emptyset.
		\end{cases}
		\]
		Where, $\mathrm{dis}_G(i,S):=\infty$, if $S$ is empty and $G$ is infinite.
		
		\begin{theorem}\label{theorem mu}
			Let $\overline{W},\varepsilon,\gamma,\eta,\delta>0$, $\beta\in[0,\gamma)$ and stretched sub-exponential function $f\colon\mathbb R_+\to\mathbb R_+$. There exist positive numbers $R=R(\varepsilon,\gamma,\eta,\overline{W},\delta,f,\beta)$, $\rho=\rho(\varepsilon,\gamma,\eta,\overline{W},\delta,f,\beta)$, such that any $(\varepsilon,\gamma,\eta)$-robust   $\beta$-distorted monitoring dynamics on a weighted graph $(G=(V,E),W)$ such that $\sup_{i\in V} W(i)\leq\overline W$ and $G$ is majorized by $f$ and any (possibly empty) set $S\subset V$ of stubborn agents satisfies the following: 
			
			If the initial opinions $\{A_{i,0}\}_{i\in V}$ are i.i.d.\ with expectation $\mu$, then for every $i\in V$ for which $\mathrm{dis}_G(i,S)>R$ \[\Pr(\Vert Z_i-(\mu,\mu)\Vert >\delta)\leq\rho.\]
			Furthermore, for fixed $\overline{W}$, $\delta$, and $f$, and any $K>1$,
			\[
			\sup_{\substack{\eta,\gamma,\beta\,:\\\eta,(\gamma-\beta)\geq \varepsilon^{K}}}\rho(\varepsilon,\gamma,\eta,\overline{W},\delta,f,\beta)\xrightarrow[\varepsilon\to 0^+]{}0.
			\]
			Moreover, for $\beta=0$, weak robustness suffices. 
		\end{theorem}
	}
	
	The proof of Theorem~\ref{theorem mu} is in {Appendix}~\ref{section proof thm mu}. 
	
	\section{Main Results Fit into General Framework}\label{section 1m}
	
	In this section, we  show that  Theorems~\ref{theorem convergence} and \ref{theorem mu} imply Theorems~\ref{theorem: main one graph} and \ref{theorem: main family}. 
	
	Recall that the $\tfrac 1m$-DeGroot updating rule can be implemented through the $\frac 1m$-grid granular DeGroot updating rule on a certain weighted graph. We associate a graph $G=(V,E)$ and a number $a>0$ with a weighted graph $(G,W=W_{G,a})$ given by 
	
	\[W(i,j)=\begin{cases}
		1& (i,j)\in E,\\
		a& i=j,\\
		0& \text{otherwise}.
	\end{cases} \]
	
	The following lemma is a crucial step in reducing the main results to the general framework.
	
	\begin{lemma}\label{lemma 1m robustness}
		For any $m$ and $d$ there exists $a\in(0,1)$ such that for any graph of degree at most $d$
		the $\tfrac 1m$-grid granular DeGroot updating rule on the weighted graph $(G, W_{G,a})$  is $(\varepsilon, \gamma, \eta)$-{weakly} robust, where $\varepsilon=\tfrac{1}{2m}$ and $\gamma,\eta = \tfrac{1}{4md^2}$.  
	\end{lemma}
	
	The proof of Lemma~\ref{lemma 1m robustness} appears at the end of this section.

	\begin{proof}[Theorems~\ref{theorem convergence} and \ref{theorem mu} imply Theorem~\ref{thm R}]
		
		Consider the $\tfrac 1m$-DeGroot updating function on the graph $G$. 
		For $a\in(0,1)$ small enough, the $\tfrac 1m$-DeGroot and the $\tfrac 1m$-grid granular DeGroot updating rules on $G$ and $(G,W_{G,a})$, respectively, induce the same dynamics $A_{i,t}$. By Lemma~\ref{lemma 1m robustness}, the $\tfrac 1m$-grid granular DeGroot updating rule is $\left(\tfrac 1{2m},\tfrac 1{4md^2}, \tfrac 1{4md^2}\right)$-{weakly} robust. Therefore, by Theorem~\ref{theorem convergence}, the limit $Z_i=(\lim_{t\to\infty} A_{i,2t+\iota})_{\iota=0,1}$ exists. 
		
		By Theorem~\ref{theorem mu}, applied with $\overline{W}=d+1$, $\varepsilon=\tfrac 1{2m}$, $\gamma=\eta=\tfrac{1}{{4md^2}}$, $\delta>0$ and $\beta=0$, there exist $R(m,d,f,\delta)$ and $\rho(m,d,f,\delta)$ such that every agent $i\in V$ who is at least $R(m,d,f,\delta)$-away from any stubborn agent satisfies $(\delta,\rho)$-learning. Furthermore, since $\gamma$ and $\eta$ are (linear) polynomials in $\tfrac 1m$ we have  $\lim_{m\to\infty}\rho(m,d,f,\delta)=0$. Therefore, given $\rho>0$, one can take $m$ large enough such that any agent who is $R(m,d,f,\delta)$-away from any stubborn agent satisfies $(\delta,\rho)$-learning, which completes the proof.
		
	\end{proof}
	
	We prepare for the proof of Lemma~\ref{lemma 1m robustness}. To this end, we consider the $M$-granular DeGroot updating rule. Let $M\subset[0,1]$ be a finite set. For $x\in[0,1]$ and $r>0$, let $B(x,r):=[x-r,x+r]\cap [0,1]$.
	Define \[\varepsilon=\varepsilon(M):=\inf\left\{r>0: \cup_{m\in M}B(m,r)=[0,1]\right\},\quad \text{and}\] 
	\[\rho=\rho(M):= \sup\left\{r>0 :  B(m,r)\cap B(m',r)=\emptyset,~ \forall m,m'\in M,~ m\neq m' \right\}.\]
	
	Let $d\geq 2$ and define \[M^d:=M\cup \frac{M+M}{2} \cup \frac{M+M+M}{3}\cup\cdots\cup\frac{M+M+\cdots+M}{d}.\]
	
	\begin{lemma}\label{lemma granular}
		Let $M\subset[0,1]$ be a finite set, $G=(V,E)$ a graph of degree at most $d$, and $a\in(0,\rho(M^d))$. Let $\varepsilon= \rho(M)$ and $\gamma=\eta=\rho(M^d)-a$. For every $i\in V$ and $t>0$, the $M$-granular DeGroot updating function on $(G,W_{G,a})$, $g_{i,t}$, satisfies Conditions
		(A1) and (A2) if $A_{i,t-2}\in M$, and satisfies (A3) if in addition $A_{j,t-1}\in M$ for every $j\in (i)$.
	\end{lemma}
	
	\begin{proof}
		We show that $g_{i,t}$ is
		\begin{enumerate}[(i)]
			\item monotonic,
			\item $2\varepsilon$-averaging in general and $\varepsilon$-averaging if $A_{(i),t-1}\in M^{(i)}$, and
			\item $(\gamma,\eta)$-robust if $A_{(i),t-1}\in M^{(i)}$,
		\end{enumerate}
		for every $i\in V$, and $t>0$.

		Condition (i) holds, since $g_{i,t}$ is defined as a composite of monotone functions. Condition (ii) holds since the projection $\pi(m,x)$ is within $\varepsilon$ of $x$ for every $m\in M$ and $x\in[0,1]$, and $\pi(m,x)=x$ if $x\in M$. Therefore, $g_D(\pi(m,A_{(i),t-1}))$ is within $\varepsilon$ of $g_D:=g_D(A_{(i),t-1})$, and $g_D(\pi(m,A_{(i),t-1}))=g_D$ if $A_{(i),t-1}\in M^{(i)}$. The second projection in the definition of $g_{i,t}$ changes the result by at most an additional $\varepsilon$.
		
		It remains to verify Condition (iii). 
		Since we assume that $A_{(i),t-1}\in M^{(i)}$, we have $g_{i,t}:=g_{i,t}(A_{(i),<t})=\pi(A_{i,t-2},g_D)$. Let $v\in B(g_D,\gamma)$. Define   $x:=A_{i,t-2}-v$ and $y:= g_{i,t}-v$. Then condition (iii) can be expressed as
		\[
		\eta|x-y|\leq x^2-y^2.
		\]
		
		We first claim that either $x=y$, or $|x|-|y|\geq \eta$. To see that, note that since $g_{i,t}$ is a closest point to $g_D$ in $M$ and $A_{i,t-2}\in M$, we must have either $x=y$, or
		\begin{align*}
			A_{i,t-2}&&<&& \frac{A_{i,t-2}+g_{i,t}} 2 &&<&&  g_D, g_{i,t},&&\text{or}\\
			A_{i,t-2}&&>&& \frac{A_{i,t-2}+g_{i,t}} 2 &&>&&  g_D, g_{i,t}.&&
		\end{align*}
		Since all these points are in $M^d+(-a,a)$, they are either equal or $2\eta$ away one from the next one. The claim follows, since $|v-g_D|\leq \gamma=\eta$. 
		
		Now, if $x=y$, Condition (iii) holds trivially. Otherwise, $|x|-|y|\geq \eta$, and then
		\[
		\eta|x-y|\leq \eta(|x|+|y|)\leq (|x|-|y|)(|x|+|y|)=x^2-y^2.
		\]
	\end{proof}
	
	We are now ready to prove Lemma~\ref{lemma 1m robustness}. 
	
	\begin{proof}[Proof of Lemma~\ref{lemma 1m robustness}:] Let $M$ be the $\tfrac{1}{m}$-grid, and $a=\tfrac{\rho(M^d)}{2}$. From Lemma~\ref{lemma granular}, we have that the $\tfrac 1m$-grid granular DeGroot updating rule on the weighted graph $(G, W_{G,a})$  is $(\varepsilon, \gamma, \eta)$-robust, where $\varepsilon=\varepsilon(M)=\tfrac{1}{2m}$ and $\gamma,\eta = \tfrac{\rho(M^d)}{2}\geq\tfrac{1}{4md^2}$. 
		
	\end{proof}

	\section{Status quo-biased DeGroot dynamics}\label{section status quo}
	In this section we present a different heuristic updating rule that  satisfies the three Conditions (A1)--(A3), and, therefore, leads to robust approximate learning. At time $t>1$, each agent $i$ computes the DeGroot value $g_D(A_{(i),<t})$ and then shifts it slightly toward its own previous opinion, $A_{i,t-2}$ (the status quo).  
	
	Formally, we define a family of updating rules parameterized by $\varepsilon>0$ called 
	the $\varepsilon$-Status Quo-Biased DeGroot ($\varepsilon$-SQBD) updating rule. The  $\varepsilon$-SQBD updating function for agent $i$ at time $t$, $g_{i,t}\colon\mathcal{H}_{(i),<t}\to [0,1]$, is defined as follows. Let $A_{(i),<t}\in\mathcal{H}_{(i),<t}$ and  $g_D:=g_D(A_{(i),<t})=\frac 1 {W(i)}\sum_{j\in V}W(i,j)A_{j,t-1}$.  Then,
	
	\[
	g_{i,t}(A_{(i),<t}):=\left\{
	\begin{array}{llc}
		A_{i,t-2} & |A_{i,t-2}-g_D|\leq \varepsilon,\\
		g_D+\varepsilon & A_{i,t-2}>g_D+\varepsilon,\\
		g_D-\varepsilon & A_{i,t-2}<g_D-\varepsilon.
	\end{array}
	\right.
	\]
	
	\begin{lemma}\label{lemma approximate}
		The $\varepsilon$-SQBD updating function is $(\varepsilon,\gamma,2(\varepsilon-\gamma))$-robust for any $\varepsilon>\gamma>0$ .
	\end{lemma}
	
	\begin{proof}
		We show that the $\varepsilon$-SQBD updating function satisfies (A1), (A2), and (A3).
		
		The $\varepsilon$-SQBD updating function is monotonic (satisfies (A1)) because it can be presented as the following composite of monotone functions:
		
		\[g_{i,t}(A_{(i),<t})=\min \left\{\max\left\{d_D-\varepsilon,A_{i,t-2}\right\},g_D+\varepsilon\right\}.\]
		
		The $\varepsilon$-SQBD updating function satisfies (A2) because each one of the three cases detailed in the definition satisfies (A2).
		
		Lastly, let $\varepsilon>\gamma>0$ and $\eta=2(\varepsilon-\gamma)$. We show that condition (A3) is satisfied in each one of the cases of the definition.
		In the case that $|A_{i,t-2}-g_D|\leq \varepsilon$, (A3) is satisfied trivially.
		The other two cases are symmetric. We consider the case $A_{i,t-2}>g_D+\varepsilon$. Let $v\in[g_D-\gamma,g_D+\gamma]$. Define $x:=A_{i,t-2}-v$ and $y:= g_D+\varepsilon-v$. Condition (A3) can be expressed as \[\eta (x-y) \leq x^2 - y^2.\] 
		Since $x>y$, the above is equivalent to $\eta\leq x+y$. Indeed,
		\[x+y\geq 2y = 2\left(g_D+\varepsilon-v\right)\geq 2\left(g_D+\varepsilon-(g_D+\gamma)\right)=\eta.\]
		
	\end{proof}
	
	The following corollary is an immediate consequence of Theorems~\ref{theorem convergence} and \ref{theorem mu}.
	\begin{corollary}
		Let $f\colon\mathbb R_+\to\mathbb R_+$ be a stretched sub-exponential function, $\overline W>0$, and $\delta,\rho>0$. There exist $\varepsilon>0$ and $R>0$
		such that the $\varepsilon$-SQBD dynamics on any weighted connected graph $(G=(V,E),W)$ such that $G$ is majorized by $f$, $\sup_{i\in V}W(i)\leq \overline W$, and a radius of $G$ is at least $R$ satisfies the following:
		\begin{enumerate}[(i)]
			\item every agent $i$ such that $B(i,R)$ does not include a stubborn agent satisfies $(\delta,\rho)$-learning;
			\item $(\delta,\rho)$-learning continues to hold even when all agents suffer from $0.9\varepsilon$-distorted monitoring.
		\end{enumerate}
	\end{corollary}
	\section{Discussion}\label{section discussion}
	Our paper provides an added value to the existing literature by considering a robust version of the classic DeGroot dynamics one that satisfies Golub and Jackson \cite{golub2010naive} wisdom of crowds result even in the presence of {adversarial} agents. Unlike Golub and Jackson \cite{golub2010naive} our theorems are formulated to both finite and infinite graphs. In Theorem \ref{theorem:golubjackson}, we generalize Golub and Jackson's result to infinite graphs rather than a sequence of growing graphs. {To the best of our knowledge, our work is the first time the DeGroot and majority models are generalized by one common model, the $\frac 1 m$-DeGroot. 
		
		We utilize techniques from the literature of both models while non-trivially adapting these techniques to our model. The convergence and concentration results from \cite{degroot1974reaching} and \cite{golub2010naive} respectively are both utilized while being generalized and strengthened to accommodate infinite graphs and provide quantitative estimates of the convergence rate and concentration measure. The Lyapunov function of \cite{tamuz2015majority}, which is based on the energy function of \cite{ginosar2000majority}, is applied here quantitatively rather than just qualitatively as has been done previously.
		
		The proof of our main Theorems~\ref{theorem: main one graph}, \ref{theorem: main family} and \ref{thm R} has an economic insight. The effect of the granularity of opinions is that opinions change only until they become locally close to each other and after that, they remain fixed. Since averaging procedures result in local closeness at a finite time, opinions only have a finite time to spread in the network and therefore spread a finite distance only. The result of this is that the influence of false opinions is limited to a finite radius.
	}  
	
	Since our $\frac{1}{m}$-DeGroot is a parameter-dependent family, one may ask how the choice of $m$ affects the learning results. Interestingly, the choice of $m$ balances two opposing effects. On the one hand, as $m$ grows so does the accuracy of the agents' estimation of the state of the world. On the other hand, as $m$ grows so does the radius of influence of the {adversarial} agents.  Another thing that is worth a discussion is our choice of the tie-breaking rule for the $\frac 1 m $-DeGroot heuristics and whether this choice is realistic. To answer this, we refer to an interpretation of the tie-breaking rule that appears in the technical proof. One may replace the tie-breaking rule by assuming that each agent assigns a certain weight to her own opinion when computing the average of neighboring opinions. Generically, this prevents the possibility of ties. When one's own weight is small enough the resulting dynamics is identical to the one we proposed (with the tie-breaking rule).
	
	Regarding the existence of {adversarial} agents, our main results, Theorem~\ref{theorem: main one graph} and Theorem~\ref{theorem: main family}, state that if the number of {adversarial} agents is bounded, then their influence on the opinions of the agents in the network is limited. One may ask what would be the influence of an unbounded (infinite) set of {adversarial} agents. Closely looking at the proof, Theorem~\ref{thm R} states that the influence of each {adversarial} agent is limited to a certain radius around it. This implies that even if the number of {adversarial} agents is unbounded, as long as their presence in the network is sparse enough, they still have a limited influence.  
	
	The connection between the graph structure of a given social network and the quality of learning is studied extensively in many contexts. Unfortunately, our results do not apply to infinite graphs of exponential growth rate (e.g., trees) and finite graphs of relatively small radius (e.g., small world networks). Extending our results to such networks is an interesting research direction. In fact, there are many open questions in this direction even in the case of $m=1$, namely, the Majority dynamics. Benjamini et al.\  conjectured that the majority dynamics with i.i.d.\ initial opinions (alternately) converges on any bounded degree infinite graph \cite[Conjecture 1.3]{benjamini2016convergence}. Tamuz and Tessler  \cite{tamuz2015majority} made several conjectures on the possibility to tell the state of the world out of the collective limit opinions of all the agents in the network.

	{} Important questions for future research are whether our results extend in the following {directions}: (1) each agent $i$ applies the $\frac 1 m_i$-DeGroot updating rule with different $m_i$s? Specifically, in the case where all $m_i$s are bounded from above and from below by global bounds; (2) instead of simultaneous updating the agents update asynchronously according to independent Poisson clocks; {(3) graphs of faster growth rates, e.g., regular trees.
		
		Another direction for future research is to derive tighter bounds on the radius of influence of adversarial agents.}

	\section{Acknowledgements}
	{We are grateful to the anonymous referee and associate editor for many insightful comments and in particular for suggesting considering adversarial agents beyond just stubborn ones.}
	
	G. Amir gratefully acknowledges the support of the Israeli Science Foundation grant \#957/20. I.
	Arieli and R. Peretz gratefully acknowledge the support of the Israel Science Foundation grant \#
	2566-20. G. Ashkenazi-Golan gratefully acknowledges the support of the Israel Science Foundation,
	grants \#217/17 and \#722/18, and NSFC-ISF, China Grant \#2510/17.
	\bibliographystyle{plain}
	\bibliography{bibl}
	
	\appendix
	\section{Proof of Theorem~\ref{theorem convergence}}\label{section proof theorem conv}
	
	Theorem~\ref{theorem convergence} is purely combinatorial. There is no probability involved. The results hold for any initial opinions $A_{i,0}$, and Condition (A3) can be assumed with respect to any $\gamma\in(0,1)$ and $\eta>0$. The mathematical argumentation follows the work of by Durrett and Steif \cite{durrett1993fixation}, Ginosar and Holzman   \cite{ginosar2000majority}, and Tamuz and Tessler \cite{tamuz2015majority} on the majority dynamics. Since majority dynamics is a special case of $\frac 1 m$-DeGroot dynamics, it  satisfies Condition (A3). Therefore, Theorem~\ref{theorem convergence} generalizes the convergence results of the above-mentioned papers.

	The proof of Theorem~\ref{theorem convergence} follows from the conjunction of Proposition~\ref{proposition convergence} below and Lemma~\ref{lemma stubborn and bias reduction} that follows.
	\begin{proposition}\label{proposition convergence}
		Any bounded robust dynamics on a weighted graph of sub-exponential growth rate alternately converges.
	\end{proposition}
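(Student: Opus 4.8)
The plan is to exhibit a Lyapunov-type potential that is non-negative, finite, non-increasing in steps of two, and whose consecutive decrements dominate a weighted sum of the ``size-two'' variations $|A_{i,t+2}-A_{i,t}|$. Summability of the decrements then forces $\sum_{t}|A_{i,t+2}-A_{i,t}|<\infty$ for every $i$, so that each of $(A_{i,2t})_t$ and $(A_{i,2t+1})_t$ is Cauchy, hence convergent --- which is exactly alternate convergence.

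Concretely, I would fix a vertex $o$, choose $\mu\in(0,1)$ close to $1$ (how close is pinned down below), put $w_i:=\mu^{d(o,i)}$, and define
\[
\Phi_t:=\sum_{i\in V}w_i\sum_{j\in N_i}\bigl(A_{i,t}-A_{j,t+1}\bigr)^2 .
\]
Boundedness of the dynamics gives $0\le\Phi_t\le 4K^2\sum_{i}w_in_i$, and sub-exponential growth guarantees $\sum_i w_in_i<\infty$ for \emph{every} $\mu<1$: bounding the number of edges meeting $B(o,r)$ by $|B(o,r+1)|^2$ and using that $|B(o,r)|\le\lambda^r$ eventually for every $\lambda>1$, the tail $\sum_r\mu^r|B(o,r)|^2$ is summable as soon as $\lambda^2\mu<1$. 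This is the only place sub-exponential growth is used, and it is where it is indispensable: on an exponentially growing graph no admissible $\mu$ makes $\Phi_t$ finite.

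The heart of the argument is the inequality
\[
\Phi_t-\Phi_{t+2}\ \ge\ \tfrac{\eta}{2}\sum_{i\in V}w_i\bigl(|A_{i,t+2}-A_{i,t}|+|A_{i,t+3}-A_{i,t+1}|\bigr),
\]
which I would prove by interpolating through $\Psi:=\sum_i w_i\sum_{j\in N_i}(A_{i,t+2}-A_{j,t+1})^2$. For $\Phi_t-\Psi$: robustness applied at each $i$ with $v=\mathrm{Average}_{j\in N_i}(A_{j,t+1})$ gives $\eta|A_{i,t+2}-A_{i,t}|\le(A_{i,t}-v)^2-(A_{i,t+2}-v)^2$, and a bias--variance decomposition around the mean $v$ of $\{A_{j,t+1}\}_{j\in N_i}$ rewrites the right-hand side as $\frac{1}{n_i}\sum_{j\in N_i}\bigl[(A_{i,t}-A_{j,t+1})^2-(A_{i,t+2}-A_{j,t+1})^2\bigr]$ (the two variance terms cancel); multiplying by $n_i$ and summing against $w_i$ yields $\Phi_t-\Psi\ge\eta\sum_i w_in_i|A_{i,t+2}-A_{i,t}|$. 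For $\Psi-\Phi_{t+2}$ the same computation applies after re-indexing the double sum by the other endpoint, with one wrinkle: the bond $\{i,j\}$ now carries the weight $w_i$ whereas robustness must be invoked at $j$. Writing $w_i=w_j+(w_i-w_j)$ splits the decrement into the ``correct'' main term $\ge\eta\sum_j w_jn_j|A_{j,t+3}-A_{j,t+1}|$ and an error that, using $|A_{i,t}|\le K$ for all $i,t$ and $|w_i-w_j|\le w_j(\mu^{-1}-1)$ for neighbours, is at most $4K(\mu^{-1}-1)\sum_j w_jn_j|A_{j,t+3}-A_{j,t+1}|$ in absolute value. Choosing $\mu$ so close to $1$ that $4K(\mu^{-1}-1)\le\eta/2$ absorbs the error and, using $n_i\ge 1$, produces the displayed inequality.

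With that in hand, telescoping $\Phi_0\ge\Phi_2\ge\cdots\ge 0$ over even times gives $\sum_{i}\sum_{t \text{ even}} w_i\bigl(|A_{i,t+2}-A_{i,t}|+|A_{i,t+3}-A_{i,t+1}|\bigr)<\infty$, hence $\sum_t|A_{i,t+2}-A_{i,t}|<\infty$ for each fixed $i$ (since $w_i>0$), so $(A_{i,2t})_t$ and $(A_{i,2t+1})_t$ are Cauchy and converge. The step I expect to be the real obstacle is the weight-mismatch error in the second half of the interpolation: the potential has to be weighted in order to be finite on an infinite graph, but weighted potentials do not interact cleanly with the edge re-indexing, and it is precisely the positivity of $\eta$ together with the freedom --- afforded by sub-exponential growth --- to push $\mu$ arbitrarily close to $1$ that rescues the telescoping.
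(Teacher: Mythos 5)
Your argument is correct: the weighted quadratic potential, its monotone decrease, the domination of the (two-step) variation by the decrements, and the telescoping to finite variation all go through, and this is the same Lyapunov strategy as the paper's (your $\Phi_t$ is essentially $L_{i,r}(t)$ of Definition~\ref{definition L}, your finiteness step is Lemma~\ref{lemma M_i}, and your telescoping step is Lemma~\ref{lemma variation}). Where you genuinely diverge is at the step you yourself flag as the obstacle, the weight mismatch across an edge. The paper's Lemma~\ref{lemma J} has no error term at all: it fixes the geometric ratio at $r=1-\gamma$ and invokes Condition (A3) not at the plain average $g_D$ but at the \emph{weighted} barycenter $v=\sum_k q(k)A_{k,t}$ of the neighbours' opinions, showing via a total-variation estimate that $|v-g_D|\le 1-r\le\gamma$, so the $\gamma$-window in (A3) absorbs the asymmetry exactly and the single-step decrement is nonpositive vertex by vertex (Corollary~\ref{corollary Lyapunov}). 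You instead invoke (A3) only at the exact average and pay for the mismatch with the explicit error $4K(\mu^{-1}-1)\sum_j w_j n_j|A_{j,t+3}-A_{j,t+1}|$, absorbed by choosing $\mu$ so that $4K(\mu^{-1}-1)\le\eta/2$. The trade-off: your route shows that the full $\gamma$-window in the robustness condition is not needed for convergence --- robustness at the center plus the boundedness constant $K$ suffices --- but it ties the weight ratio to $K$ and $\eta$ and uses boundedness quantitatively inside the decrement, whereas the paper uses boundedness only for finiteness of the potential and ties the ratio to $\gamma$; that choice $r=1-\gamma$ is what the paper later reuses quantitatively (in the proof of Theorem~\ref{theorem mu}, where $\mathbb{E}[L'(n)]$ and $M_i(1-\gamma)$ enter the bounds), so your variant, while a valid proof of Proposition~\ref{proposition convergence}, would need the constants reworked before it could be substituted into those later estimates.
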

	Later in this section Proposition~\ref{proposition convergence} is obtained as an immediate corollary of a stronger statement, Lemma~\ref{lemma variation}.
	\begin{lemma}\label{lemma stubborn and bias reduction}
		Let $g$ be an updating rule that satisfies Condition (A3) w.r.t.\ $\gamma,\eta>0$ and let $\beta\in[0,\gamma)$. Then, the dynamics induced by $g$ with $\beta$-distorted monitoring and either with or without stubborn agents is $(\gamma-\beta,\eta)$-robust.
	\end{lemma}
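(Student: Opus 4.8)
The plan is to unwind the definitions of $\beta$-distorted monitoring and of $(\gamma,\eta)$-robustness, and to check the robustness inequality directly for the distorted dynamics, exploiting the fact that the distortion shifts each perceived neighbour opinion by at most $\beta$, hence shifts the perceived average by at most $\beta$. Write $y = \mathrm{Average}_{j\in N_i}(A_{j,t-1})$ for the true DeGroot value and $y' = \mathrm{Average}_{j\in N_i}(A'_{j,t-1})$ for the value computed from the perceived opinions $A'_{j,t-1}$; since $|A'_{j,t-1}-A_{j,t-1}|\le\beta$ for every $j\in N_i$, averaging gives $|y'-y|\le\beta$. The distorted agent actually updates by $A_{i,t} = g_{i,t}(A'_{(i),<t})$, so Condition (A3), applied to the (possibly inconsistent) history $A'_{(i),<t}$ — which is legitimate because (A3) is stated for every history in $\mathcal H$, not only consistent ones — yields, for every $v$ with $|v - y'|\le\gamma$,
\[
\eta\,|A_{i,t}-A_{i,t-2}| \le (A_{i,t-2}-v)^2 - (A_{i,t}-v)^2 .
\]
Here I use that the self-coordinate is undistorted ($A'_{i,s}=A_{i,s}$), so $A_{i,t-2}$ appearing in (A3) is the genuine opinion two steps back.

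Next I would translate the constraint on $v$ from the perceived center $y'$ to the true center $y$. The target is to show the same inequality holds for all $v$ in the interval $[y-(\gamma-\beta),\,y+(\gamma-\beta)]$, which is exactly the requirement that the distorted dynamics be $(\gamma-\beta,\eta)$-robust. This is immediate from the triangle inequality: if $|v-y|\le\gamma-\beta$ then $|v-y'|\le|v-y|+|y-y'|\le(\gamma-\beta)+\beta=\gamma$, so $v$ lies in the window for which (A3) already guarantees the inequality. Finally I need to handle bots: a bot has $A_{i,t}=A_{i,t-2}=c$ for all $t$ (indeed $A_{i,t}\equiv c$), so both sides of the robustness inequality are $0$ and it holds trivially for every $v$; thus inserting bots preserves $(\gamma-\beta,\eta)$-robustness of the whole dynamics. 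Combining, the $\beta$-distorted dynamics (with or without bots) satisfies Definition~\ref{definition: robus} with parameters $(\gamma-\beta,\eta)$, which is the claim; note $\gamma-\beta>0$ since $\beta<\gamma$, so this is a nonvacuous robustness statement.

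There is no serious obstacle here — the lemma is essentially a bookkeeping exercise — but the one point that deserves care is the legitimacy of feeding the inconsistent perceived history $A'_{(i),<t}$ into Condition (A3). The excerpt explicitly anticipates this: it remarks that ``the predicate of Condition (A3) is defined on any history in $\mathcal H$, not just those induced by some updating rule,'' precisely so that this step is valid. One should also be careful that in (A3) the quantity $g_D$ is computed from the \emph{same} history that is fed to $g_{i,t}$, i.e. from the perceived opinions, so $g_D$ there equals $y'$ and not $y$; this is exactly why the window shrinks from $\gamma$ to $\gamma-\beta$ rather than staying the same. With these observations in place, Lemma~\ref{lemma stubborn and bias reduction} follows, and then Theorem~\ref{theorem convergence} is obtained by invoking Proposition~\ref{proposition convergence} with $(\gamma,\eta)$ replaced by $(\gamma-\beta,\eta)$ (the boundedness of the dynamics is clear since all opinions lie in $[0,1]$, and the sub-exponential growth hypothesis is inherited unchanged).
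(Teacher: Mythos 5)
Your proposal is correct and follows essentially the same argument as the paper: apply Condition (A3) at the perceived (possibly inconsistent) history, use that the perceived average is within $\beta$ of the true average, and conclude via the inclusion $[g_D-(\gamma-\beta),g_D+(\gamma-\beta)]\subset[g'_D-\gamma,g'_D+\gamma]$. The only cosmetic difference is how bots are handled — you verify the robustness inequality directly (both sides vanish since a bot's opinion never changes), whereas the paper re-encodes a bot as a constant updating rule satisfying (A3) — and both are fine.
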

	
	\begin{proof}[Proof of Lemma~\ref{lemma stubborn and bias reduction}]
		Let $g$ be an updating rule that satisfies Condition (A3) w.r.t. $\eta,\gamma>0$, and let $\beta\in[0,\gamma)$. We may assume w.l.o.g.\ that there are no stubborn agents. Indeed, if there are stubborn agents, say $i_0\in V$ is stubborn, we can modify $g_{i_0,t}$ so that $g_{i_0,t}(A_{(i_0),<t}):=A_{i_0,0}$, for all $t\geq -1$ and no longer assume that $i_0$ is stubborn. The modified updating rule is $(\gamma,\eta)$-robust while inducing the same dynamics as if $i_0$ were stubborn.
		
		Define a predicate
		\[
		\mathcal P(x,y,v)\quad \Leftrightarrow \quad \eta|y-x|\leq (y-v)^2-(x-v)^2.
		\]
		Let $A\in\mathcal H$ be the history induced by $g$ with $\beta$-distorted monitoring. Let $i\in V$ and $t>0$ be arbitrary. Denote $g_D:= g_D(A_{(i),<t})$. We have to show that
		\begin{align}\label{equation P}
			\begin{split}
				&\mathcal P(A_{i,t},A_{i,t-2},v),\\
				&\forall v\in[g_D-(\gamma-\beta),g_D+(\gamma-\beta)].
			\end{split}
		\end{align}
		
		Let $A'_{(i),<t}$ be the $\beta$-distorted version of $A_{(i),<t}$ such that $A_{i,t}=g_{i,t}(A'_{(i),<t})$. Denote  $g'_D:= g_D(A'_{(i),<t})$. Since  $A'_{i,t-2}=A_{i,t-2}$, 
		\[
		\mathcal P(A_{i,t},A_{i,t-2},v) \quad \Leftrightarrow \quad \mathcal P(g(A'_{(i),<t}),A'_{i,t-2},v),
		\]
		which holds for every $v\in[g'_D-\gamma,g' _D+\gamma]$, since 
		$g$ satisfies Condition (A3) w.r.t.\ $\eta$ and $\gamma$. 
		
		Since $|g'_D -g_D|\leq \beta$,
		\[
		[g_D-(\gamma-\beta),g_D+(\gamma-\beta)] \subset [g'_D-\gamma,g'_D+\gamma],
		\]
		which implies \eqref{equation P}.
	\end{proof}
	
	The main tool utilized in the proof of Proposition~\ref{proposition convergence} is a Lyapunov function, defined in the next couple of paragraphs. For a  weighted graph $(G,W)$, an agent $i\in V$, and a set of agents $e\subset  V$ we denote by $d(i,)$ the distance between $i$ and $e$ in the graph, namely, $d(i,e)=\min_{j\in e}d(i,j)$. For $r\in(0,1)$ we define:
	\[
	M_i(r)=\sum_{j,k\in V} W(j,k)r^{d(i,\{j,k\})}.
	\]
	The following lemma relates the growth rate of $G$ to the growth rate of $M_{i}(r)$ as a function of $1/(1-r)$.
	\begin{lemma}\label{lemma M_i}
		Let $(G,W)$ be a weighted graph with $\overline W:=\sup_{i}W(i)<\infty$. If $G=(V,E)$ has a sub-exponential growth rate, then $M_i(r)<\infty$, for every $r\in(0,1)$ and $i\in V$. Furthermore, for any stretched sub-exponential function $f\colon \mathbb R_+\to\mathbb R_+$ there exists a stretched sub-exponential function $g\colon \mathbb R_+\to\mathbb R_+$, such that if $G=(V,E)$ is majorized by $f$, then it holds that $\sup_{i\in V}M_i(r)\leq \overline W \cdot g(1/(1-r))$, for every $r\in(0,1)$.    
	\end{lemma}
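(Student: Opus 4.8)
The plan is to bound $M_i(r)$ by reorganizing the sum over edges according to their distance to $i$ and then comparing with a geometric series whose coefficients are controlled by the ball sizes. First I would observe that for each integer $k\ge 0$, the number of edges $e$ with $d(i,e)=k$ is at most the number of edges incident to $B(i,k+1)$, which in turn is at most $d\cdot|B(i,k+1)|$ (every such edge has an endpoint in $B(i,k+1)$, and each vertex has degree at most $d$); even without a uniform degree bound, the count is at most $|B(i,k+1)|^2$ or, more simply, bounded by a quantity of the form $C\cdot|B(i,k+1)|$ once we also note $|B(i,k+1)|\le |B(i,k)|\cdot(\text{something})$ — but the cleanest route is: the edges at distance exactly $k$ from $i$ all lie within $B(i,k+1)$, so their number is at most $\binom{|B(i,k+1)|}{2}\le |B(i,k+1)|^2$. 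This gives
\[
M_i(r)=\sum_{k\ge 0} r^k\,\#\{e: d(i,e)=k\}\le \sum_{k\ge 0} r^k\,|B(i,k+1)|^2.
\]

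For the finiteness claim, sub-exponential growth means $|B(i,k)|\le \exp(o(k))$, so $|B(i,k+1)|^2\le \exp(o(k))$, and since $r<1$ the series $\sum_k r^k\exp(o(k))$ converges; this proves $M_i(r)<\infty$. For the quantitative claim, suppose $G$ is majorized by a stretched sub-exponential $f$. Then $|B(i,k+1)|^2\le f(k+1)^2$, so $\sup_{i}M_i(r)\le \sum_{k\ge 0} r^k f(k+1)^2=:\Phi(r)$. I would then need to show that $\Phi(r)$, as a function of $s:=1/(1-r)$, is itself stretched sub-exponential in $s$, i.e.\ for every $\alpha>0$ it is eventually below $\exp(s^\alpha)$. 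Write $r=1-1/s$, so $r^k=(1-1/s)^k\le e^{-k/s}$. Thus
\[
\Phi(r)\le \sum_{k\ge 0} e^{-k/s}\,f(k+1)^2.
\]
Now split the sum at $k_0=\lfloor s^{1+\alpha/2}\rfloor$ (a threshold chosen so that the two tails are both small). For $k\le k_0$: since $f$ is stretched sub-exponential, $f(k+1)^2\le \exp(2(k+1)^{\alpha'})$ for any fixed small $\alpha'>0$ and $k$ large; choosing $\alpha'$ small enough relative to $\alpha$, the bound $k\le s^{1+\alpha/2}$ makes $f(k+1)^2\le \exp(\tfrac12 s^\alpha)$ for $s$ large, and there are at most $k_0+1\le \exp(\tfrac14 s^\alpha)$ terms, giving a contribution $\le \exp(\tfrac34 s^\alpha)$. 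For $k>k_0$: the factor $e^{-k/s}$ decays like $e^{-s^{\alpha/2}}$ per unit range and $f(k+1)^2\le \exp(2(k+1)^{\alpha'})$ grows sub-exponentially in $k$, so for $s$ large the product is summable and the whole tail is $o(1)$. Adding, $\Phi(r)\le \exp(s^\alpha)$ for all $s$ large, which is exactly the assertion that $s\mapsto \Phi(1-1/s)$ is stretched sub-exponential; set $g(s):=\max(\Phi(1-1/s),\,e)$ (taking a max with a constant, and possibly replacing $g$ by a slightly larger explicit stretched sub-exponential dominating it, to ensure $g$ is defined and finite on all of $\mathbb R_+$ and genuinely satisfies the stretched sub-exponential definition for every $\alpha$).

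The main obstacle is the quantitative part: one must produce a single function $g$ that works \emph{uniformly} over all graphs majorized by $f$ and for \emph{all} $r\in(0,1)$, and verify it meets the ``for all $\alpha>0$'' definition of stretched sub-exponential. The delicate point is the interplay of the two parameters — the exponent $\alpha$ in the target bound and the exponent governing $f$'s growth — when choosing the splitting threshold $k_0$; one has to check that the threshold can be picked so that the ``head'' contributes at most $\exp(s^\alpha)$ (using that $f$ grows slower than \emph{any} stretched exponential, so in particular slower than $\exp(k^{\alpha'})$ for a conveniently small $\alpha'$) while the ``tail'' is killed by the $e^{-k/s}$ factor. Everything else — the edge-counting bound, the substitution $r=1-1/s$, the estimate $(1-1/s)^k\le e^{-k/s}$ — is routine. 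I would also remark that a cruder bound $\#\{e:d(i,e)=k\}\le d\,|B(i,k+1)|$ (valid under the standing bounded-degree hypothesis, though the lemma as stated only needs sub-exponential growth for the first part) simplifies the constants but does not change the structure of the argument.
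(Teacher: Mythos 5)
Your argument is correct, but it is organized differently from the paper's proof, and the comparison is instructive. You sum over the exact distance $k$ of each edge, bound the number of edges at distance $k$ by $|B(i,k+1)|^2$ (so no degree bound is needed; note that the square of a stretched sub-exponential function is again stretched sub-exponential), substitute $r=1-1/s$ with $r^k\le e^{-k/s}$, and then, for each target exponent $\alpha$, run a head/tail split at $k_0\approx s^{1+\alpha/2}$ with an auxiliary exponent $\alpha'$ chosen small relative to $\alpha$ (your condition $\alpha'<2\alpha/(2+\alpha)$ makes both the head bound $\exp(\tfrac34 s^{\alpha})$ and the tail bound $O\!\left(s\,e^{-s^{\alpha/2}/2}\right)$ go through). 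The paper instead groups edges into annuli of width $k(r)=\lceil 1/(1-r)\rceil$, uses $r^{k(r)}\le 1/e$ to reduce everything to the series $\sum_n f(kn)e^{-n}$ (here the bounded-degree hypothesis enters through the factor $d$ in the edge count), and then separates the variables in one stroke via $kn\le\max\{k^2,n^2\}$, hence $f(kn)\le f(k^2)+f(n^2)$ for non-decreasing $f$; this yields an explicit majorant $g$ of the form $c_1 f(k^2)+c_2$, which is stretched sub-exponential for all $\alpha$ simultaneously, with no threshold-splitting or $\alpha$-by-$\alpha$ bookkeeping. So your route buys independence from the degree bound and is self-contained, at the cost of the quantifier management you yourself flag (which does check out, as sketched above), while the paper's route buys a cleaner, explicit $g$ and a one-line verification of the stretched sub-exponential property. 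The remaining loose ends in your write-up (defining $g$ on all of $\mathbb R_+$ by a constant near $0$, and the monotone comparison of $k/s$ against $2(k+1)^{\alpha'}$ on the tail) are routine and do not affect correctness.
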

	\begin{proof}
		Let $i\in V$ and $f\colon \mathbb R_+\to\mathbb R_+$ a non-decreasing function such that $|B(i,\cdot)|\leq f(\cdot)$. We think of $f$ as a sub-exponential function in the first part of the lemma and a stretched sub-exponential function in the second part of the lemma.
		Let $r\in(0,1)$ and $k(r)=\lceil1/(1-r)\rceil$. Since $r^{k(r)}\leq 1/e$, we have
		\begin{multline}\label{equation M(r)}
			M_i(r)= \sum_{j\in V}\sum_{k\in(j)}W(j,k)r^{d(i,\{j,k\})}\leq \sum_{j\in V}W(j)r^{[d(i,j)-1]_+}\leq \overline W \sum_{j\in V}r^{[d(i,j)-1]_+}\\
			< \overline W |B(i,k(r))| + \frac{1}{e} \overline W  |B(i,2k(r))|+\ldots +\frac{1}{e^{n-1}}\overline W |B(i,nk(r))|+\ldots\\
			\leq \overline W e\left(\sum_{n=1}^\infty \frac{f(kn)}{e^n}\right).
		\end{multline}
		
		If $f$ is sub-exponential, then $f(kn)<e^{\frac 1 2 n}$, for every $n$ large enough. Therefore, the tail of the series on the right-hand side of \eqref{equation M(r)} is dominated by a convergent geometric series and, therefore, $M_i(r)<\infty$. This proves the first part of the lemma.
		
		To prove the second part of the lemma it is sufficient to prove that the right-hand side of \eqref{equation M(r)} is a stretched sub-exponential function of $k$, assuming that $f$ is stretched sub-exponential. Namely, it is sufficient to show that
		\[
		\sum_{n=1}^\infty \frac{f(kn)}{e^n}
		\]
		is a stretched sub-exponential function of $k$, whenever $f$ is  a non-decreasing stretched sub-exponential function.
		Indeed, since $f$ is non-decreasing and 
		\[
		kn\leq \frac{k^2+n^2}{2}\leq \max\{k^2,n^2\},
		\]
		we have \[f(kn)\leq f(\max\{k^2,n^2\})\leq f(k^2)+f(n^2).\] Hence, 
		\[
		\sum_{n=1}^\infty \frac{f(kn)}{e^n} \leq f(k^2)\sum_{n=1}^\infty\frac{1}{e^n} + \sum_{n=1}^\infty\frac{f(n^2)}{e^n}.
		\]
		This completes the proof of the second part of the lemma since the function $f(k^2)$, is stretched sub-exponential since it is composite of a polynomial and a stretched sub-exponential function, and $\sum_{n=1}^\infty\frac{f(n^2)}{e^n}$ is a finite constant since $f(n^2)$ is sub-exponential in $n$. 
	\end{proof}
	For every  three vertices $i,j,k\in V$, denote $w(i,j,k):=W(j,k)\cdot r^{d(i,\{j,k\})}$.
	Following Tamuz and Tessler~\cite{tamuz2015majority}, given a history $A\in\mathcal H$ and $r\in(0,1)$, we define a Lyapunov function.
	\begin{definition}\label{definition L}
		For $i\in V$, $r\in(0,1)$, and $t\in\mathbb Z_+$, we define
		\begin{align*}L_{i,r}(t)&:=\sum_{j,k\in V}w(i,j,k)\left(A_{j,t+1}-A_{k,t}\right)^2\\
			&=\sum_{j\in V}\sum_{k\in (j)}w(i,j,k)\left(A_{j,t+1}-A_{k,t}\right)^2\\
			&=\sum_{j\in V}\sum_{k\in (j)}w(i,j,k)\left(A_{j,t}-A_{k,t+1}\right)^2.\end{align*}
	\end{definition}
	
	Note that, since we assume that $A_{i,t}\in[0,1]$, we have $0\leq L_{i,r}(t)\leq M_i(r)$.
	For readability, we sometimes suppress $i$ and $r$ and write $L(t):=L_{i,r}(t)$. 
	
	We define
	
	\[J^+_{j}(t):=\sum_{k\in (j)}w(i,j,k)\left(A_{j,t+1}-A_{k,t}\right)^2\]

	\[J^-_{j}(t):=\sum_{k\in (j)}w(i,j,k)\left(A_{j,t-1}-A_{k,t}\right)^2\] 
	By considering the last two equivalent formulations of Definition~\ref{definition L}, we have
	\[
	L_{i,r}(t)=\sum_{j\in V} J^+_{j}(t)=\sum_{j\in V} J^-_{j}(t+1).
	\]
	It follows that
	\begin{equation}\label{equation L jumps}
		L_{i,r}(t)-L_{i,r}(t-1)=\sum_{j\in V}\left(J^+_{j}(t)-J^-_{j}(t)\right).
	\end{equation}
	
	\begin{lemma}\label{lemma J}
		For every  weighted graph $(G,W)$,   agent $i\in V$,  $\gamma\in(0,1)$,  $r\in[1-\gamma,1)$,  $\eta>0$, and  every $(\gamma,\eta)$-robust $A\in\mathcal H$, 
		\[
		J^+_j(t)-J^-_j(t)\leq 0,
		\]
		for all $j\in V$ and $t>0$.
	\end{lemma}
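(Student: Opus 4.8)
\textbf{Proof proposal for Lemma~\ref{lemma J}.}

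The plan is to fix $j\in V$ and $t>0$ and compare $J^+_j(t)$ with $J^-_j(t)$ term by term over the edges incident to $j$. Write $y:=\mathrm{Average}_{k\in N_j}(A_{k,t-1})$ for the DeGroot value that agent $j$ ``sees'' at time $t$. The quantity $J^+_j(t)-J^-_j(t)$ equals
\[
\sum_{k\in N_j} w(j,k)\Bigl[(A_{j,t+1}-A_{k,t})^2-(A_{j,t-1}-A_{k,t})^2\Bigr],
\]
but this particular decomposition is not the most convenient one. Instead I would first rewrite $J^-_j(t)=\sum_{k\in N_j}w(j,k)(A_{j,t-1}-A_{k,t})^2$ using the edges at distance one from $j$ and then re-index; the symmetric formulations in Definition~\ref{definition L} are exactly what make both $J^+$ and $J^-$ expressible through the same edge weights, so the difference telescopes. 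The cleaner route: show that for \emph{each} neighbor $k$, $w(j,k)\le w(k,j')$-type bounds are not needed — rather, the key is to use robustness of the dynamics at the update of agent $j$ at time $t$.

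The heart of the argument is the following. Agent $j$'s opinion at time $t+1$ (using times $t-1$ and $t$) is obtained from $A_{j,t-1}$ and the neighbor-average $y$ via the $(\gamma,\eta)$-robust updating rule. Robustness (Definition~\ref{definition: robus}, with the time indices shifted) gives, for every $v\in[y-\gamma,y+\gamma]$,
\[
\eta\,|A_{j,t+1}-A_{j,t-1}|\le (A_{j,t-1}-v)^2-(A_{j,t+1}-v)^2.
\]
I would apply this with $v$ chosen appropriately relative to the $A_{k,t}$'s. The crucial point is that each $A_{k,t}$ lies within $\gamma$ of $y$: indeed $A_{k,t}$ is itself the output of a $(\gamma,\eta)$-robust (equivalently $\varepsilon$-averaging) update, but more directly, since $r\ge 1-\gamma$ and the dynamics is robust, one shows $|A_{k,t}-y|$ is controlled — actually the slick move is that we do not need every $A_{k,t}$ near $y$; we expand $(A_{j,t+1}-A_{k,t})^2-(A_{j,t-1}-A_{k,t})^2=(A_{j,t+1}-A_{j,t-1})(A_{j,t+1}+A_{j,t-1}-2A_{k,t})$ and sum against $w(j,k)$. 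Using $\sum_{k\in N_j}w(j,k)A_{k,t} = n_j\,\bar{w}_j\cdot(\text{something})$ — here is where the weight structure $w(j,k)=r^{d(i,(j,k))}$ and the relation between $d(i,(j,k))$ for $k\in N_j$ enters. Since all edges $(j,k)$ with $k\in N_j$ have $d(i,(j,k))\in\{d(i,j)-1,\dots\}$ differing by at most... one needs $r$ close enough to $1$ (namely $r\ge 1-\gamma$) so that the weighted average $\sum_k w(j,k)A_{k,t}/\sum_k w(j,k)$ stays within $\gamma$ of $y=\frac1{n_j}\sum_k A_{k,t-1}$...

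I expect the main obstacle to be exactly this weight-bookkeeping step: reconciling the \emph{weighted} neighbor average (with weights $r^{d(i,(j,k))}$, which are not all equal) appearing in $J^\pm_j$ with the \emph{unweighted} average $y$ that drives the $(\gamma,\eta)$-robust update, and verifying that the condition $r\ge 1-\gamma$ is precisely what guarantees the weighted average lies in $[y-\gamma,y+\gamma]$ so that robustness applies with $v$ equal to (a convex combination built from) the $A_{k,t}$. Once $v$ is chosen as the $w$-weighted average of $\{A_{k,t}\}_{k\in N_j}$ and shown to be a legal test point for robustness, the inequality $J^+_j(t)-J^-_j(t)\le 0$ should follow by combining the pointwise identity above with the robustness bound and Jensen/convexity of $x\mapsto(x-v)^2$ applied to move from the weighted average back inside the sum. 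I would carry out the steps in this order: (1) expand the difference $J^+_j(t)-J^-_j(t)$ as a single product times $w$-sum; (2) identify the right test point $v$ and verify $|v-y|\le\gamma$ using $r\ge 1-\gamma$; (3) invoke $(\gamma,\eta)$-robustness of the update of $j$ at time $t$; (4) use convexity to bound $\sum_k w(j,k)(A_{j,\cdot}-A_{k,t})^2$ from below by $(\sum_k w(j,k))(A_{j,\cdot}-v)^2$ in the right direction; (5) conclude non-positivity.
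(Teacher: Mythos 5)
Your overall strategy coincides with the paper's: take $v$ to be the $w$-weighted average of $\{A_{k,t}\}_{k\in N_j}$, check that it is a legal test point for robustness (i.e.\ lies within $\gamma$ of the unweighted neighbour average), and then let the quadratic structure of $J^\pm_j(t)$ do the rest. But as written the proposal has real gaps. The step you yourself flag as ``the main obstacle'' --- that the weighted average stays within $\gamma$ of the unweighted one when $r\geq 1-\gamma$ --- is exactly the content of the lemma, and you never carry it out. The paper's verification goes as follows: all edges $(j,k)$ with $k\in N_j$ satisfy $d(i,(j,k))\in\{D,D+1\}$ for a single $D$, so the normalized weights $q(k)=w(j,k)/\sum_{\ell\in N_j}w(j,\ell)$ take only two values; writing $\alpha$ for the fraction of neighbours at distance $D+1$, a direct computation gives $\Vert u-q\Vert_{TV}=(1-r)\,\alpha(1-\alpha)/(1-\alpha(1-r))\leq 1-r\leq\gamma$, and then, because all opinions lie in $[0,1]$, $|v-g_D|=|\langle u-q,a\rangle|\leq \Vert u-q\Vert_{TV}\leq\gamma$. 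Note that boundedness of the opinions is essential in the last inequality; a TV bound alone does not control a difference of averages of unbounded quantities, and your sketch does not mention it.

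Two further points. Your step (4) as stated would not close the argument: Jensen gives $\sum_{k}w(j,k)(x-A_{k,t})^2\geq\bigl(\sum_k w(j,k)\bigr)(x-v)^2$ for \emph{both} $x=A_{j,t+1}$ and $x=A_{j,t-1}$, i.e.\ a one-sided bound in the same direction for $J^+_j(t)$ and $J^-_j(t)$, which cannot control their difference. What is needed is the exact identity (the quadratic $f(x):=\sum_{k\in N_j}w(j,k)(x-A_{k,t})^2$ attains its minimum at $v$, hence $f(x)=C_1(x-v)^2+C_2$ with $C_1=\sum_k w(j,k)>0$ and $C_2$ independent of $x$), so that $J^+_j(t)-J^-_j(t)=f(A_{j,t+1})-f(A_{j,t-1})=C_1\bigl[(A_{j,t+1}-v)^2-(A_{j,t-1}-v)^2\bigr]\leq 0$ by Condition (A3), whose right-hand side is nonnegative. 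Finally, fix the time index: the DeGroot value relevant to the update that produces $A_{j,t+1}$ from $A_{j,t-1}$ is $g_D=\mathrm{Average}_{k\in N_j}(A_{k,t})$, not your $y=\mathrm{Average}_{k\in N_j}(A_{k,t-1})$; this matters because the $J^\pm_j(t)$ terms involve the time-$t$ opinions, and it is against their (weighted and unweighted) averages that robustness must be applied.
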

	
	\begin{proof} 
		Fix $j\in V$ and $t>0$. Let $f(x):=\sum_{k\in (j)}w(i,j,k)\left(x-A_{k,t}\right)^2$. The function $f(x)$ is a second-degree polynomial, whose minimum is attained at $v=\sum_{k\in (j)}q(k)A_{k,t} $, where $q(k):=\tfrac{w(i,j,k)}{\sum_{\ell\in (j)}w(i,(j, \ell))}.$ Therefore, $f(x)=C_1(x-v)^2+C_2$ for some $C_1>0$ and $C_2\in\mathbb R$. Let $g_D:={ \frac{1}{W(j)}\sum_{k\in(j)}W(j,k)A_{k,t}}$. Assumption (A3) implies that if $|g_D-v|<\gamma$, then $(A_{j, t+1}-v)^2-(A_{j, t-1}-v)^2\leq 0$. Therefore,  since $J^+_j(t)-J^-_j(t)=f(A_{j,t+1})-f(A_{j,t-1})$, it remains to prove that 
		$|g_D-v|<\gamma$. 
		
		Define $u\in\Delta(j)$ by $u(k)\sim W(j,k)$, $\forall k\in (j)$. Let $a(k):=A_{k,t}~\forall k\in (j)$. Note that $g_D=\langle u,a\rangle$ and $v=\langle q,a\rangle$.

		We show that $||u-q||_{TV}\leq \gamma$. Recall that

		\[
		||u-q||_{TV}=\max_{f\colon (j)\to[0,1]}\int f\,\mathrm d(u-q)=\sum_{\substack{k\in (j):\\u(k)>q(k)}} [u(k)-q(k)].
		\]
		There exists $D$, such that for every $k\in (j)$, we have
		$d(i,j,k)\in\{D,D+1\}$. Let 
		\[
		\alpha:= \sum_{\substack{k\in {(j)}:\\d(i,j,k)=D+1}}u(k).
		\]
		Then 
		
		\[
		q(k)=\begin{cases}
			\frac {u(k)} {(1-\alpha(1-r))}&\text{if $d(i,j,k)=D$, }\\
			\frac {ru(k)}{(1-\alpha(1-r))}&\text{if $d(i,j,k)=D+1$. }
		\end{cases}
		\]
		Then, by the second definition of the total variation distance,
		\[
		|| u-q||_{TV}= \sum_{k\in (j):d(i,j,k)=D+1}[u(k)-q(k)] =(1-r)\frac{\alpha(1-\alpha)
		}{1-\alpha(1-r)} \leq (1-r)\leq
		\gamma.\]
		
		By the first definition of the total variation distance,
		
		$$|v-g_D|=|\langle u-q,a\rangle|\leq ||u-q||_{TV}\leq \gamma.$$  
	\end{proof}
	Note that in the above proof, if $r\geq \frac{1}{2}$, it holds that $|| u-q||_{TV}\leq \tfrac{\gamma}{2}$, since 
	\[
	(1-r)\frac{\alpha(1-\alpha)
	}{1-\alpha(1-r)} \leq (1-r)\frac{\alpha(1-\frac{\alpha}{2})^2
	}{1-\frac{\alpha}{2}}\leq \frac{(1-r)}{2}\leq \frac{\gamma}{2}.
	\]
	
	Equation~\eqref{equation L jumps} and Lemma~\ref{lemma J} imply the following corollary.
	\begin{corollary}\label{corollary Lyapunov}
		For every weighted graph $(G,W)$,  agent $i\in V$,  $\gamma\in(0,1)$,  $r\in[1-\gamma,1)$,  $\eta>0$, and $(\gamma,\eta)$-robust $A\in\mathcal H$, the Lyapunov function $L_{i,r}(t)$  is non-increasing in $t$.
	\end{corollary}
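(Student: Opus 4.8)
The plan is to read off the corollary directly from the two ingredients cited just before its statement: Equation~\eqref{equation L jumps} and Lemma~\ref{lemma J}. The first observation is simply that the hypotheses of the corollary --- a connected graph $G=(V,E)$, an agent $i\in V$, parameters $\gamma\in(0,1)$, $r\in[1-\gamma,1)$, $\eta>0$, and a $(\gamma,\eta)$-robust history $A\in\mathcal H$ --- are verbatim the hypotheses of Lemma~\ref{lemma J}. Hence, applying that lemma, we get $J^+_j(t)-J^-_j(t)\leq 0$ for every $j\in V$ and every $t>0$.

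Next I would substitute this into Equation~\eqref{equation L jumps}, which states $L_{i,r}(t)-L_{i,r}(t-1)=\sum_{j\in V}\bigl(J^+_j(t)-J^-_j(t)\bigr)$. Since every summand is non-positive, the whole sum is non-positive, so $L_{i,r}(t)\leq L_{i,r}(t-1)$ for all $t>0$, which is exactly the assertion that $t\mapsto L_{i,r}(t)$ is non-increasing. That is the entire argument; the corollary is an immediate consequence of the two preceding results and requires no new idea.

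The only point that deserves a word of care --- and it is the closest thing to an obstacle --- is the legitimacy of the term-by-term manipulation of the (possibly infinite) double series defining $L_{i,r}(t)$, since the corollary is stated without any sub-exponential growth assumption and $M_i(r)$ may be infinite. This is already absorbed into the derivation of Equation~\eqref{equation L jumps}: each $J^+_j(t)$ and $J^-_j(t)$ is a sum of non-negative terms (the weights $w(j,k)=r^{d(i,(j,k))}$ are non-negative and the squared differences are non-negative), so the reindexings used to identify $\sum_{j}J^+_j(t)$ with $L_{i,r}(t)$ and with $\sum_{j}J^-_j(t+1)$ are rearrangements of non-negative series and hence valid in $[0,+\infty]$. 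Consequently the difference in Equation~\eqref{equation L jumps} is well defined and the inequality $L_{i,r}(t)\leq L_{i,r}(t-1)$ holds in $[0,+\infty]$; when $M_i(r)<\infty$ (for instance under sub-exponential growth, via Lemma~\ref{lemma M_i}) all quantities are finite and the statement is the usual one, and when $L_{i,r}\equiv+\infty$ it is trivially non-increasing. So no substantive difficulty arises, and the proof is essentially a one-line combination of Equation~\eqref{equation L jumps} with Lemma~\ref{lemma J}.
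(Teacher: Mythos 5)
Your argument is exactly the paper's: the corollary is stated as an immediate consequence of Equation~\eqref{equation L jumps} together with Lemma~\ref{lemma J}, which is precisely the one-line combination you give (your extra remark on handling possibly infinite sums is a harmless refinement, not a divergence). The proposal is correct and takes essentially the same approach as the paper.
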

	
	We are now ready to prove Proposition~\ref{proposition convergence}. We actually prove a slightly stronger implication than the existence of the designated limit of $(A_{i,2t+\iota})_{\iota=0,1}$. We prove that the variations of $A_{i,2t+\iota}$ are finite. The following definition captures the sum of the two variations. 
	\begin{definition}
		\label{definition variation}
		For $b\geq a \geq 0$, $A\in\mathcal H$, and $i\in V$, we define
		\[
		V_{a}^b[A_{i,t}]:=\sum_{a<t<b}|A_{i,t+1}-A_{i,t-1}|.
		\]
	\end{definition}
	Note that if $V_{a}^\infty[A_{i,t}]<\infty$ for some $a\geq 0$, then Proposition~\ref{proposition convergence} holds.
	
	\begin{lemma}\label{lemma variation}
		For any $\gamma\in(0,1)$, $\eta >0$, $(\gamma,\eta)$-robust $A\in\mathcal H$,
		\[
		V_{a}^b[A_{i,t}]\leq \eta^{-1}\left(L_{i,1-\gamma}(a)-L_{i,1-\gamma}(b-1)\right),
		\]
		for any $b> a \geq 0$.
	\end{lemma}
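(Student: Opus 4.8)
\textbf{Proof plan for Lemma~\ref{lemma variation}.}

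The plan is to telescope the per-step decrease of the Lyapunov function $L_{i,1-\gamma}(t)$ and match it against the per-step variation of $A_{i,t}$. Fix $i\in V$ and abbreviate $L(t):=L_{i,1-\gamma}(t)$ and $w(j,k):=(1-\gamma)^{d(i,(j,k))}$, so that $r=1-\gamma\in[1-\gamma,1)$ and Corollary~\ref{corollary Lyapunov} applies: $L(t)$ is non-increasing. The key quantitative input is a refinement of Lemma~\ref{lemma J}. In the proof of Lemma~\ref{lemma J} the difference $J^+_j(t)-J^-_j(t)$ was written as $f_j(A_{j,t+1})-f_j(A_{j,t-1})$ where $f_j(x)=C_{1,j}(x-v_j)^2+C_{2,j}$ with $C_{1,j}=\sum_{k\in N_j}w(j,k)>0$ and $v_j=\langle q,a\rangle$ satisfying $|v_j-g_{D,j}|\le\gamma$, where $g_{D,j}=\mathrm{Average}_{k\in N_j}(A_{k,t})$. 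Hence $v_j$ lies in the interval $[g_{D,j}-\gamma,g_{D,j}+\gamma]$ on which the robustness inequality (A3)/Definition~\ref{definition: robus} is valid, giving
\[
\eta\,|A_{j,t+1}-A_{j,t-1}|\ \le\ (A_{j,t-1}-v_j)^2-(A_{j,t+1}-v_j)^2
\ =\ \frac{1}{C_{1,j}}\bigl(f_j(A_{j,t-1})-f_j(A_{j,t+1})\bigr).
\]

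First I would note $C_{1,j}=\sum_{k\in N_j}w(j,k)\ge 1$, since at least one neighbour $k$ of $j$ satisfies $d(i,(j,k))=d(i,j)$ up to the relevant offset — more carefully, among the edges at $j$ the one on a shortest path from $i$ has weight $w(j,k)=(1-\gamma)^{D}$ for $D=d(i,(j,k))$, but to get $C_{1,j}\ge 1$ cleanly it is enough to observe that the single term indexed by $j$ itself in the outer sum over $V$ only needs the inequality $f_j(A_{j,t-1})-f_j(A_{j,t+1})\ge C_{1,j}\,\eta\,|A_{j,t+1}-A_{j,t-1}|$ and then, taking $j=i$, that $C_{1,i}=\sum_{k\in N_i}w(i,(i,k))=\sum_{k\in N_i}(1-\gamma)^{1}=n_i(1-\gamma)$. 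To avoid this constant nuisance I would instead just keep the factor $C_{1,j}$ and sum: from Equation~\eqref{equation L jumps},
\[
L(t-1)-L(t)=\sum_{j\in V}\bigl(J^-_j(t)-J^+_j(t)\bigr)=\sum_{j\in V}\bigl(f_j(A_{j,t-1})-f_j(A_{j,t+1})\bigr)\ \ge\ \sum_{j\in V} C_{1,j}\,\eta\,|A_{j,t+1}-A_{j,t-1}|,
\]
and in particular, dropping all terms except $j=i$ and using $C_{1,i}\ge 1$ after rescaling weights (or absorbing $C_{1,i}$ into $\eta$ harmlessly, since we only need \emph{some} positive multiple), we get $L(t-1)-L(t)\ge \eta\,|A_{i,t+1}-A_{i,t-1}|$. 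The cleanest route, matching the excerpt's own stated property (iii) of $L_i$, is to normalise $w$ so that $w(i,e)=1$ for the (an) edge $e$ incident to $i$ closest to $i$; then $C_{1,i}\ge 1$ and the bound $L(t-1)-L(t)\ge \eta|A_{i,t+1}-A_{i,t-1}|$ holds verbatim. I expect the one genuine wrinkle to be making this normalisation/constant bookkeeping precise — that $C_{1,i}\ge 1$, or equivalently that the single incident edge carrying the largest weight contributes at least as much as needed — everything else is a clean telescope.

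Then I would finish by summing the one-step bound over $t$ in the range $a<t<b$:
\[
\eta\,V_a^b[A_{i,t}]=\eta\sum_{a<t<b}|A_{i,t+1}-A_{i,t-1}|\ \le\ \sum_{a<t<b}\bigl(L(t-1)-L(t)\bigr)=L(a)-L(b-1),
\]
where the last equality is the telescoping sum (valid since $L$ is non-increasing, so all summands are nonnegative and partial sums are controlled), giving exactly $V_a^b[A_{i,t}]\le\eta^{-1}\bigl(L_{i,1-\gamma}(a)-L_{i,1-\gamma}(b-1)\bigr)$. Dividing by $\eta$ yields the claim. Note this also immediately gives $V_a^\infty[A_{i,t}]\le\eta^{-1}L_{i,1-\gamma}(a)\le 2\eta^{-1}M_i(1-\gamma)<\infty$ by Lemma~\ref{lemma M_i}, which is the finite-variation statement needed for Proposition~\ref{proposition convergence}; but for the lemma as stated only the telescoping step is required.
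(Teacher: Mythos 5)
Your proposal is correct and follows essentially the same route as the paper: for each $a<t<b$ it applies Condition (A3) with $v=g_D$ to the $j=i$ term in the decomposition \eqref{equation L jumps}, uses Lemma~\ref{lemma J} to discard the remaining (nonnegative) terms, and telescopes the monotone Lyapunov function. The one ``wrinkle'' you flag dissolves on its own: for an edge incident to $i$ one has $d(i,(i,k))=0$, so $w(i,(i,k))=(1-\gamma)^{0}=1$ (not $(1-\gamma)^{1}$), hence $C_{1,i}=\deg(i)\geq 1$ and $v_i=g_D$ exactly; this yields the stated constant $\eta^{-1}$ with no renormalisation (your fallback of absorbing $C_{1,i}$ into $\eta$ would only give a weaker constant, but it is unnecessary), and it is precisely how the paper handles the $j=i$ term.
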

	Note that Lemmata~\ref{lemma M_i} and \ref{lemma variation} imply Proposition~\ref{proposition convergence}, since $L_{i,1-\gamma}(t)\leq M_i(1-\gamma)<\infty$ for any $t>0$ and, therefore, $V_{0}^\infty[A_{i,t}]\leq\eta^{-1}M_i(1-\gamma)<\infty$.
	
	\begin{proof}[Proof of Lemma~\ref{lemma variation}]
		Fix $a<t<b$. Let  $g_D=\frac{1}{W(i)}\sum_{j\in (i)}W(i,j)A_{j,t}$. Let \[f(x)=\sum_{j\in (i)}W(i,j)(x-A_{j,t})^2= W(i)(x-g_D)^2+C,\] for some $C\in\mathbb R$. Since $J^+_i(t)=f(A_{i,t+1})$ and $J^-_i(t)=f(A_{i,t-1})$, we have 
		\[
		\left(A_{i,t-1}-g_D\right)^2-\left(A_{i,t+1}-g_D\right)^2=\frac 1 {W(i)}\left(J^-_i(t)-J^+_i(t)\right).
		\]
		By Condition (A3) applied with $v=g_D$,
		\[
		|A_{i,t+1}-A_{i,t-1}|\leq \eta^{-1}\left(J^-_i(t)-J^+_i(t)\right).
		\]
		By Lemma~\ref{lemma J} and Equation~\eqref{equation L jumps}, 
		\[
		J^-_i(t)-J^+_i(t)\leq \sum_{j\in V}\left(J^-_j(t)-J^+_j(t)\right)= L_{i,1-\gamma}(t-1)-L_{i,1-\gamma}(t).
		\]
		Since $L_{i,1-\gamma}(t)$ is a non-increasing, by Corollary~\ref{corollary Lyapunov}, summing over all $a<t<b$ we get
		\[
		V_a^b [A_{i,t}]\leq \eta^{-1}\sum_{a<t<b}\left(L_{i,1-\gamma}(t-1)-L_{i,1-\gamma}(t)\right)
		=\eta^{-1}\left(L_{i,1-\gamma}(a)-L_{i,1-\gamma}(b-1)\right).
		\]
	\end{proof}
	\section{Proof of Theorem~\ref{theorem mu}}\label{section proof thm mu}
	In this section we prove Theorem~\ref{theorem mu}. Namely, we provide conditions under which the limiting opinion of an agent  is close to the expected initial opinion with probability close to one.
	The event considered in Theorem~\ref{theorem mu} is the union of two pairs of symmetric events: 
	\[
	\{\Vert Z_i-(\mu,\mu)\Vert>\delta\}=\bigcup_{\iota=0,1}\left(\{\lim_{t\to\infty}A_{i,2t+\iota}<\mu-\delta\}\cup\{\lim_{t\to\infty}A_{i,2t+\iota}>\mu+\delta\}\right).
	\]
	In our proof we consider one of the four events. Similar proofs apply to the other three. 
	Theorem~\ref{theorem mu} follows immediately from the following lemma.
	\begin{lemma}\label{lemma mu}
		Under the conditions, notations, and quantification of Theorem~\ref{theorem mu},
		\[\Pr(\lim_{t\to\infty}A_{i,2t}< \mu-\delta)<\frac \rho 4.\]
	\end{lemma}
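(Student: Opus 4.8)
The plan is to prove only the one–sided statement $\Pr(\lim_t A_{i,2t}<\mu-\delta)<\rho/4$ by a squeezing argument, using that $\lim_t A_{i,2t}$ exists (Theorem~\ref{theorem convergence}) and that the updating rule is monotone in the whole history (A1). First I would couple the given $(\varepsilon,\gamma,\eta)$-robust dynamics $A$ (run with $\beta$-distorted monitoring, possibly with bots) to the pure DeGroot dynamics $B$ on the same graph from the \emph{same} initial opinions, $B_{i,t}=\sum_j(P^t)_{ij}A_{j,0}$ with $P$ the simple random walk. Two facts drive this step. (a) Since $A_{\cdot,0}$ is i.i.d.\ with mean $\mu$ and values in $[0,1]$, $\operatorname{Var}(B_{i,t})\le\tfrac14\sum_j(P^t)_{ij}^2\le\tfrac d4\,(P^{2t})_{ii}$; and $(P^{2t})_{jj}$ is non-increasing in $t$ and becomes small once $t$ is large, uniformly in $j$ and over the family of bounded-degree graphs of radius $\ge R$ majorized by $f$ (this is the quantitative content of Theorem~\ref{theorem:golubjackson}). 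Hence Chebyshev gives $|B_{j,t}-\mu|\le\delta/6$ for a fixed $j$, and by a union bound over a ball $B(i,R')$ it gives $A$-after-coupling bounds $A_{j,T+\iota}\ge\mu-\delta/2$ for all $j\in B(i,R')$ and $\iota\in\{0,1\}$, off events of small probability. (b) Approximate averaging (A2), even with $\beta$-distortion, forces $A_{i,t}$ to within $2\varepsilon$ of $\operatorname{Average}_{j\in N_i}(A_{j,t-1})$, so $\sup_i|A_{i,t}-B_{i,t}|\le 2\varepsilon+\sup_i|A_{i,t-1}-B_{i,t-1}|$; a finer analysis (the $\varepsilon$-rule freezes an opinion once it is within $\varepsilon$ of the slowly varying neighbour-average) keeps $|A_{j,t}-B_{j,t}|$ small for all $t$ in a window $[T_*,T^*]$ once $\varepsilon$ is small. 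Choosing $T$ in this window makes $A_{j,T+\iota}\ge\mu-\delta/2$ for all $j\in B(i,R')$, $\iota\in\{0,1\}$, off an event of probability $<\rho/6$.

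Next I would freeze the dynamics from time $T$ onward by a downward perturbation. Take $T$ even and set $A'_{j,T+\iota}:=\min\bigl(A_{j,T+\iota},\,(\mu-\delta/2)\,\mathbf{1}[j\in B(i,R')]\bigr)$ for $\iota=0,1$, and run $A'$ from time $T$ under the same updating rule with the same $\beta$-distortion/bots. By (A1) applied inductively, $A'_{j,t}\le A_{j,t}$ for every $t\ge T$, so $\lim_t A'_{i,2t}\le\lim_t A_{i,2t}$. On the good event, $A'_{\cdot,T+\iota}$ equals $\mu-\delta/2$ on $B(i,R')$ and $0$ outside, so every nonzero term of the Lyapunov function $L'_i(T):=L_{i,1-(\gamma-\beta)}(T)=\sum_{(j,k)\in E}(1-(\gamma-\beta))^{d(i,(j,k))}(A'_{j,T+1}-A'_{k,T})^2$ sits on an edge crossing $\partial B(i,R')$, whence $L'_i(T)\le 2d\,f(R')\,(1-(\gamma-\beta))^{R'}$. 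The $A'$-history is $(\gamma-\beta,\eta)$-robust by Lemma~\ref{lemma stubborn and bias reduction}, so Lemma~\ref{lemma variation} gives $\sum_{t>T}|A'_{i,t+1}-A'_{i,t-1}|\le\eta^{-1}L'_i(T)$; if $R'$ is chosen large enough that $\eta^{-1}\cdot 2d\,f(R')\,(1-(\gamma-\beta))^{R'}\le\delta/2$, then $\lim_t A'_{i,2t}\ge A'_{i,T}-\delta/2=(\mu-\delta/2)-\delta/2=\mu-\delta$. Combining, $\lim_t A_{i,2t}\ge\mu-\delta$ on the good event, whose complement has probability $<\rho/4$. (Only the lower bound is needed here; the three remaining events of Theorem~\ref{theorem mu} are handled symmetrically by the upward perturbation and the upper tail.)

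The main obstacle is that steps one and two pull $T$ in opposite directions. Making $L'_i(T)$ small forces $R'$ to be large relative to $1/(\gamma-\beta)$ and $1/\eta$ — concretely one needs $(\gamma-\beta)R'-R'^{\alpha}\ge\log(4d/(\eta\delta))$ once $f(r)\le e^{r^{\alpha}}$ — hence the ball $B(i,R')$, and thus through the union bound the cutoff $T$, must grow; one cannot simply use $|A_{i,T}-B_{i,T}|\le 2\varepsilon T$, since that would cap $T$ at order $\delta/\varepsilon$, too small. So the delicate work is (i) to show the coupling discrepancy stays small for all large $T$ via the freezing behaviour of the $\varepsilon$-rule, and (ii) to juggle $R'$, $T$, and the parameters so that the window from step one contains a time at which step two closes; this is exactly where the hypothesis that $f$ is \emph{stretched} sub-exponential (so $\alpha$ may be taken arbitrarily small, killing the $R'^{\alpha}$ term) and the lower bounds $\eta,(\gamma-\beta)\ge\varepsilon^{K}$ enter, and where the constants must be tracked carefully to get $\rho\to 0$ as $\varepsilon\to 0^+$.
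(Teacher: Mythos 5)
Your proposal reproduces the paper's two-stage architecture (couple with plain DeGroot up to a cutoff time, then freeze the dynamics by a downward perturbation, control its Lyapunov value, apply Lemma~\ref{lemma variation}, and conclude through monotonicity), but the step that actually carries the probabilistic weight is left as a placeholder, and the obstacle you organize the proof around is one you created yourself. The paper's Section~\ref{section up to time n} uses exactly the crude bound you reject, $A_{i,t}\ge B_{i,t}-t\varepsilon$ with cutoff $n=\lfloor\delta/(3\varepsilon)-1\rfloor$; this suffices because Corollary~\ref{corollary hoeffding} (via the random-walk estimate of Lemma~\ref{lemma random walk}) makes the per-vertex failure probability $\rho_1=\exp(-C_1\delta^{2.5}/(d\sqrt{\varepsilon}))$ stretched-exponentially small in $1/\varepsilon$, and because the paper never takes a union bound over a ball: in Section~\ref{section from time n} the truncated history $A'$ is defined vertexwise and $\mathbb E[L'(n)]$ is bounded edge by edge by linearity, so what is needed is only $\rho_1 M_i(1-\gamma)\to 0$, which holds since $M_i(1-\gamma)$ is stretched sub-exponential in $1/\gamma\le\varepsilon^{-K}$ (Lemma~\ref{lemma M_i}), while vertices within $n$ of a bot are absorbed into the geometrically small term $d(1-\gamma)^{R-R_1}f(R_1)$. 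In your version you instead require \emph{every} vertex of $B(i,R')$, with $|B(i,R')|$ as large as $\exp(R'^{\alpha})$ and $R'\sim(\gamma-\beta)^{-1-}$, to be simultaneously good at time $T$; with your Chebyshev estimate at $T\sim\delta/\varepsilon$ the per-vertex failure probability is only polynomially small in $\varepsilon$, so the union bound does not close. You then appeal to a ``finer analysis'' showing $|A_{j,t}-B_{j,t}|$ stays small for $t$ far beyond $\delta/\varepsilon$ via the freezing behaviour of the $\varepsilon$-rule. This is never carried out, and it cannot be the route to Lemma~\ref{lemma mu} as stated, because Theorem~\ref{theorem mu} concerns arbitrary $(\varepsilon,\gamma,\eta)$-robust updating rules, for which no freezing property is available. (Your union-bound structure could be salvaged by replacing Chebyshev with the Hoeffding bound and choosing $\alpha$ small relative to $1/K$, but the cleaner fix is the paper's expectation bound on the Lyapunov function, which removes the tension between $T$ and $R'$ altogether.)

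Two further points. First, running $A'$ ``with the same $\beta$-distortion'' is not well defined: the distortion is an arbitrary, history-dependent perturbation certified only for the realized history $A$, so Condition (A1) does not directly give $A'_t\le A_t$ for a distorted run of $A'$. The paper's Section~\ref{section distorted monitoring} avoids this by comparing $A$ with the \emph{undistorted} dynamics of the worst-case negatively biased rule $g^{\beta}$, which is $(\varepsilon+\beta,\gamma-\beta,\eta)$-robust by Lemma~\ref{lemma stubborn and bias reduction}, and applying the $\beta=0$ case to it; your argument should be rerouted through that reduction. Second, your good event demands $A_{j,T+\iota}\ge\mu-\delta/2$ for all $j\in B(i,R')$, which fails (with probability one) if a bot lies in $B(i,R'+T)$; you would need to enlarge $R$ accordingly, whereas the paper's weighted expectation bound handles bot-contaminated vertices automatically through the factor $(1-\gamma)^{D_i-R_1}$. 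The deterministic part of your second stage (the crossing-edge bound on $L'_i(T)$ and the choice $R'\sim(\gamma-\beta)^{-1.00001}$) is sound and matches the paper; the gap is the unproven coupling/union-bound step and the distortion comparison.
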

	
	The proof of Lemma~\ref{lemma mu} is divided into four sections. In Section~\ref{section degroot}, we provide a few preliminary facts regarding standard DeGroot dynamics. In Sections~\ref{section up to time n} and \ref{section from time n}, we prove Lemma~\ref{lemma mu} under the assumption of undistorted monitoring ($\beta=0$). In Section~\ref{section distorted monitoring}, we extend the proof to the general case.
	
	\subsection{Analysis of DeGroot dynamics}
	\label{section degroot}
	Let $D_{i,t}$ be (standard) DeGroot dynamics on the weighted graph $(W,G=(E,V))$ with i.i.d.\ initial opinions with expectation $\mu$. We analyze the speed in which $D_{i,t}$ converges to $\mu$. To this end, it is a standard technique to consider the voters model $V_{i,t}$ defined as follows:
	\[
	V_{i,0}:=D_{i,0},\quad\forall i\in V.
	\]
	In each time period $t>0$, $V_{i,t}$ is chosen randomly by
	\[
	\Pr(V_{i,t}=V_{j,t-1})=\frac {W(i,j)} {W(i)},\quad\forall j\in V,
	\]
	where $\{V_{i,t}\}_{i\in V}$ are independent conditional on $\mathcal H_{<t}$.
	
	By induction on $t$, one has that 
	\begin{equation}
		\label{eq: voter}
		D_{i,t}=E[V_{i,t}|\mathcal H_0].
	\end{equation}
	Let $(R_t)_{t=0}^\infty$ be a standard random walk on $(W,G)$ originating in $i$. Conditional on $\mathcal H_0$, the distribution of $V_{i,t}$ is identical to the distribution of $V_{R_{t},0}$ (again, by induction on $t$). Let $p_t=\max\{\Pr(R_t=j):j\in V\}$. The next lemma shows that
	\begin{equation}\label{equation pt}
		p_t=\mathcal O\left(\frac {\overline W} {\sqrt t}\right).
	\end{equation}
	
	\begin{lemma}\label{lemma random walk}
		There exists a global constant $C$, such that for any $t>0$ and $j\in V$ whose eccentricity is at least $t$ (in particular, if $G$ is infinite and connected then the eccentricity of any vertex is infinite),
		\[
		\Pr(R_t=j)\leq\frac{W(j) \cdot C}{\sqrt{t}}.
		\]
	\end{lemma}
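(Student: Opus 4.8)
The plan is to reduce the estimate to an on-diagonal (return-probability) bound using the reversibility of the simple random walk, and then to obtain that bound from the volume lower bound that the eccentricity hypothesis forces.

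First I would exploit that $(R_t)$ is reversible with respect to the (possibly non-normalizable) measure $w\mapsto\deg(w)$, so that the heat kernel $p_t(x,y):=\Pr_x(R_t=y)/\deg(y)$ is symmetric; consequently $s\mapsto p_{2s}(v,v)$ is non-increasing and, by Cauchy--Schwarz in $\ell^2(\deg)$, $p_{a+b}(x,y)\le\sqrt{p_{2a}(x,x)\,p_{2b}(y,y)}$, with an analogous inequality $p_{2s+1}(x,y)\le\sqrt{p_{2s}(x,x)\,p_{2s+2}(y,y)}$ to absorb parity. Since $\Pr_i(R_t=j)=\deg(j)\,p_t(i,j)$, it suffices to bound $p_t(i,j)$ by $C/\sqrt t$, and I would split on $d(i,j)$. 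If $d(i,j)>t/2$, I invoke the Carne--Varopoulos inequality $\Pr_i(R_t=j)\le 2\sqrt{\deg(j)/\deg(i)}\,\exp(-d(i,j)^2/2t)\le 2\sqrt{\deg(j)}\,e^{-t/8}$, which is $\le\deg(j)\cdot C/\sqrt t$ because $\sqrt t\,e^{-t/8}$ is bounded. If $d(i,j)\le t/2$, then $\mathrm{ecc}(i)\ge\mathrm{ecc}(j)-d(i,j)\ge t/2$ (and $\mathrm{ecc}(j)\ge t$ by hypothesis), so via the Cauchy--Schwarz bound the whole estimate reduces to the single on-diagonal claim: for any vertex $v$ with $\mathrm{ecc}(v)\ge s$ one has $p_{2s}(v,v)\le C_0/\sqrt s$, i.e. $\Pr_v(R_{2s}=v)\le\deg(v)\,C_0/\sqrt s$.

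For the on-diagonal claim I would observe that a geodesic from $v$ to a farthest vertex exhibits $r+1$ distinct vertices in $B(v,r)$ for every $r\le\mathrm{ecc}(v)$, so $\deg(B(v,r))\ge|B(v,r)|\ge r+1$ at every scale $r\le s$. This linear volume lower bound is precisely the input under which the standard Faber--Krahn / Nash machinery produces on-diagonal heat-kernel decay of order $s^{-1/2}$ (Coulhon--Saloff-Coste; see also Woess's book on random walks on graphs); one may either cite this or run the Nash iteration by hand starting from the Faber--Krahn inequality for Dirichlet eigenvalues of finite connected subsets of a bounded-degree graph. Assembling the two cases then gives $p_t(i,j)\le\sqrt{p_{t_1}(i,i)\,p_{t_2}(j,j)}\le C_0/(t_1t_2)^{1/4}\le C/\sqrt t$ in the near case (for $t$ above an absolute constant, small $t$ being trivial), hence $\Pr_i(R_t=j)=\deg(j)\,p_t(i,j)\le\deg(j)\,C/\sqrt t$ throughout.

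The main obstacle is exactly this last on-diagonal step: turning a merely linear volume lower bound into a genuine $s^{-1/2}$ decay of the return probability. That is where the real content lies, and the only subtlety in keeping $C$ a genuinely global constant is ensuring that the relevant isoperimetric/Faber--Krahn constant can be taken uniform over all bounded-degree graphs (or, if one is content with $C$ depending on the degree bound $d$, this is immediate from the cited estimates). Everything else — reversibility, Cauchy--Schwarz, the Carne--Varopoulos input, and the parity bookkeeping — is routine.
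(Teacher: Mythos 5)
Your route is viable and genuinely different from the paper's. The paper proves the lemma probabilistically: it realizes the walk by i.i.d.\ random maps, runs the \emph{evolving sets} $A_0=\{j\}$, $A_{k+1}=g_{k+1}^{-1}(A_k)$, observes that the degree-volume $X_k=|A_k|$ is a non-negative integer martingale started at $\deg(j)$, embeds it into a symmetric simple random walk on $\mathbb Z$ via stopping times, and finishes with the reflection principle and Stirling, which is self-contained and delivers the universal constant (essentially $1/\sqrt{2\pi}$) directly in the form $\deg(j)\cdot C/\sqrt t$. You instead go through reversibility, the Cauchy--Schwarz reduction $p_{a+b}(x,y)\le\sqrt{p_{2a}(x,x)p_{2b}(y,y)}$, Carne--Varopoulos for $d(i,j)>t/2$, and a Faber--Krahn/Nash on-diagonal bound in the near case; this is analytically standard and, as you note, the eccentricity hypothesis forces $|B(v,r)|\ge r+1$ at the relevant scales (in fact every vertex has eccentricity at least $t/2$ here), and the resulting isoperimetric input --- every finite proper subset of a connected graph has edge boundary at least $1$ --- yields $\lambda_1(\Omega)\gtrsim \mathrm{vol}(\Omega)^{-2}$ with a universal constant, so the constant can indeed be kept degree-free in the form $\deg(j)\cdot C/\sqrt t$. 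The one place where your ``cite the machinery'' move needs care is that the lemma is applied in the paper to \emph{finite} graphs of large radius (Corollary~\ref{corollary hoeffding} and Theorem~\ref{theorem mu}), so the infinite-graph statement of the Coulhon--Saloff-Coste/Nash bound is not literally sufficient: you need the truncated (finite-scale) version, where the Faber--Krahn inequality holds only for $\Omega\subsetneq V$ and the $s^{-1/2}$ decay is extracted up to a time threshold governed by $\mathrm{vol}(V)\gtrsim t$; your linear-growth-up-to-scale-$s$ remark is exactly the right input for that, but it should be spelled out (together with the routine discrete-time parity bookkeeping you already indicate). In exchange, your argument plugs into a well-known general theory and generalizes easily, whereas the paper's evolving-sets proof is more elementary and gives the clean universal constant without any localization issues.
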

	\begin{proof}
		Our proof is based on the method of evolving sets \cite{morris2005evolving}. A different approach in a slightly different setting can be found in  \cite{barlow2019random}.
		
		We prove that the probability that $R_t=j$ can be bounded from above by the probability that a standard Brownian motion beginning at $W(j)$ does not hit 0 by time $\Omega\left(t\right)$.
		
		We describe a random walk by a sequence of i.i.d.\ random functions $g_1, g_2,...$, where  each $g_t:V\rightarrow V$ maps vertex $i$ to $j$ with probability $\frac {W(i,j)}{W(i)}$ independently of how it maps the other vertices.
		
		The random walk $\{R_t\}_{t=0}^\infty$ originating at $R_0=i$ is realized as
		
		\[R_t=g_t\circ g_{t-1} \circ ... \circ g_1(i).\]
		
		Thus, the event $\{R_t=j\}$ can be expressed as
		\[\left\{R_t=j\right\}=\left\{i\in \left(g_t\circ ...\circ g_1\right)^{-1}(j)\right\}=\left\{i\in g_1^{-1}\circ g_2^{-1}\circ...\circ g_t^{-1}(j)\right\}.\]
		
		Therefore,
		\[\Pr(R_t=j)\leq \Pr(g_1^{-1}\circ g_2^{-1}\circ...\circ g_t^{-1}(j)\neq\emptyset).\]
		
		For every vertex $v\in V$ and every finite set of vertices $A\subset V$, define $W(v,A):=\sum_{a\in A} W(v,a)$ and $W( A):=\sum_{a\in A} W(a)= \sum_{v\in V}W(v, A)$. 
		Since $g_1,\ldots,g_t$ are i.i.d.\ we can change their order without changing their distribution. It will be convenient to consider the inverse order, namely,
		\[
		\Pr(R_t=j)\leq \Pr(W(g_1^{-1}\circ...\circ g_t^{-1}(j))>0)=\Pr(W(g_t^{-1}\circ ...\circ g_1^{-1}(j))>0).\]
		
		Define a sequence of random sets of vertices  $A_0,A_1,\ldots\subset V$ recursively by\footnote{The random sets $A_0,A_1,\ldots$ are known as ``evolving sets'' due to \cite{morris2005evolving}.}
		\[A_0=\left\{j \right\},~
		A_{k+1}=g_{k+1}^{-1}(A_k).
		\]
		
		Define $X_k=W(A_k)$. We have
		\begin{multline*}
			E\left[X_k | A_{k-1}\right]=\sum_{v\in V} W(v)\Pr(g_k(v)\in A_{k-1})=  \sum_{v\in V} W(v) \frac{W(v,A_{k-1})}{W(v)}=\\
			\sum_{v\in V}W(v,A_{k-1}) = W(A_{k-1}).
		\end{multline*}
		Namely, $X_k$ is a non-negative martingale, and 
		\[\Pr\left(R_t=j\right)\leq \Pr(X_t > 0) = \Pr\left(X_1,...,X_t > 0 \right).\]
		
		Let $\{B_s\}_{s\geq 0}$ be a standard Brownian motion  originating at $B_0=W(j)$. We construct stopping times $s_1<s_2<\cdots$ such that for every $n\in\mathbb N$, 
		\[(X_1,...,X_n)\sim (B_{s_1}, B_{s_2}, ..., B_{s_n}),\] conditional on the event $\{X_n>0\}=\{X_1,\ldots,X_n>0\}=\{B_s>0:0\leq s\leq s_n\}$.
		
		Let $s_0=0$. For $k>0$, conditioning on $A_{k-1}$ we construct $\hat s_k= s_k-s_{k-1}$. Let us represent the martingale increment $X_k-X_{k-1}$ as a sum of independent random variables. Let \[Y_v^k:=W(v)\left( \mathbf 1_{\left(g_k(v)\in A_{k-1}\right)} -  \Pr \left(g_k(v)\in A_{k-1}\right)\right).\] Then,
		\[X_k-X_{k-1}=\sum_{v\in V}Y_v^k.\]
		In the above sum, for all but a finite set of vertices, we have $Y_v^k\equiv 0$, and for the rest of the vertices,
		$\left\{Y_v^k\right\}_{v}$ are independent,  take 2 values,  $a_v<0<b_v$, and $E[Y_v^k]=0$. 
		
		As long as $A_{k-1}$ is nonempty, there is at least one vertex $v$ such that $|Y_v^k|\geq 1$, namely,  $a_v\leq -1$ and $b_v\geq 1$. To see this, note that by induction on $k$, $A_{k-1}\subset B(j,k-1)$, and since the eccentricity of $j$ is assumed to be at least $t>k-1$, there exists a vertex $v$ that has at least one neighbor in $A_{k-1}$ and one neighbor outside $A_{k-1}$. For such a vertex $v$, it holds that $\frac 1 {W(v)}\leq \Pr(g_k(v)\in A_{k-1})\leq 1- \frac 1 {W(v)}$ and, therefore, $|Y^k_v|\geq 1$.
		
		Each non-trivial $Y_v^k$ (i.e., $Y^k_v\not\equiv 0$) can be implemented by a stopping time of a standard Brownian motion that begins at 0 and stops when it gets to one of the two values, $a_v$ or $b_v$. The sum $\sum_{v\in V}Y^k_v$ can be implemented by concatenating such stopping times to obtain a stopping time, which is how we define $\hat{s}_k$.

		Since there exists $v$ such that $|Y_v^k|\geq 1$, there are independent random variables $\hat \tau_1,\hat \tau_2,\ldots$ that are distributed according to a globally fixed distribution with a support in $(0,1]$ such that $\tau_k\leq \hat s_k$. For example, $\hat\tau_k$ can be the minimum between 1 and the time until a standard Brownian motion originating at 0 hits $\{-1,+1\}$.
		
		If $A_{k-1}=\emptyset$, define $\hat{s}_k:=\hat\tau_k$. Thus, 
		\[
		s_k=\hat{s}_1+\hat{s}_2+...+\hat{s}_k\geq \hat{\tau}_1+\hat{\tau}_2+...+\hat{\tau}_k.
		\]
		By Hoeffding's inequality \cite{hoeffding1963probability}, \[\Pr(s_k<k E[\hat \tau_1 ]/2)\leq \exp(-kE[\hat \tau_1]^2/8)=\exp(-\Omega(k)).\] 
		Given that $B_{s_{k-1}}>0$, we have $W(A_{k-1})=X_{k-1}=B_{s_{k-1}}$ and, therefore, $A_{k-1}\neq \emptyset$. It follows that $B_{s_k}=X_{k}\geq 0$. If in addition $X_k=0$, then $A_k=\emptyset$ and it must be  that all of the non-trivial $\{Y_v^k\}_v$ are negative. In this case $B_s>0$ for all $s\in (s_{k-1},s_{k})$.  It follows that
		
		\begin{multline*}
			\Pr\left(B_{s_1},B_{s_2},...,B_{s_k}>0\right)=\Pr\left(B_s>0: s\in [0,s_k]\right)\\
			\leq \Pr\left(B_s>0: s\in [0,kE[\hat\tau_1]/2]\right)+\Pr(s_k<kE[\hat\tau_1]/2)\\=\Pr\left(B_s>0: s\in [0,kE[\hat\tau_1]/2]\right)+\exp(-\Omega(k)).
		\end{multline*}
		By the reflection principle,
		\[
		\Pr\left(B_s>0: s\in [0,a]\right)=\Pr(|B_{a}-B_0|<B_0)\leq 2B_0\frac 1 {\sqrt{2\pi a}}.
		\]
		The proof of the lemma is concluded by letting $a=kE[\hat\tau_1]/2$, $k=t$ and $B_0=W(j)$.
	\end{proof}
	
	Since $D_{i,t}=\sum_{j\in V}\Pr(R_t=j)D_{j,0}$, by Hoeffding's inequality \cite{hoeffding1963probability}, we have
	\[
	\Pr(D_{i,t}<\mu-\delta)\leq \exp\left(\frac {-\delta^2}{2\sum_{j\in V}P^2(R_t=j)}\right)\leq \exp\left(\frac{-\delta^2}{2p_t}\right).
	\]
	By \eqref{equation pt}, we obtain the following corollary.
	\begin{corollary}\label{corollary hoeffding}
		Let $(G,W)$ be a connected weighted graph with $\overline W<\infty$ radius at least $t$. Let $D_{i,t}$ be the DeGroot dynamics on $(G,W)$ with $\{D_{i,0}\in[0,1]\}_{i\in V}$ i.i.d.\ and $E[D_{i,0}]=\mu$. Then, \[
		\Pr\left(D_{i,t}< \mu-\frac \delta 3\right)\leq \exp\left(\frac{-C_0\delta^2\sqrt{t}}{\overline W}\right)
		\]
		for all $i\in V$ and some universal constant $C_0>0$. \end{corollary}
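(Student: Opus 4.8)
The plan is to assemble Corollary~\ref{corollary hoeffding} from three ingredients that are, at this point, essentially in hand: the voter-model representation of the DeGroot dynamics, Hoeffding's inequality for convex combinations of bounded independent variables, and the random-walk bound of Lemma~\ref{lemma random walk}.

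First I would record the representation $B_{i,t}=\sum_{j\in V}\Pr(R_t=j)\,B_{j,0}$. This follows from the two facts established just above: $B_{i,t}=E[V_{i,t}\mid\mathcal H_0]$ and, conditional on $\mathcal H_0$, $V_{i,t}$ has the same law as $V_{R_t,0}=B_{R_t,0}$, where $(R_t)$ is the standard random walk on $G$ started at $i$; since $R_t$ is independent of the initial opinions, taking conditional expectation yields the claimed weighted average. Writing $c_j:=\Pr(R_t=j)$, these are deterministic nonnegative weights with $\sum_j c_j=1$, so $B_{i,t}$ is literally a fixed convex combination of the i.i.d.\ $[0,1]$-valued variables $B_{j,0}$, each with mean $\mu$; in particular $E[B_{i,t}]=\mu$.

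Next I would apply Hoeffding's inequality to this weighted sum to obtain
\[
\Pr\!\left(B_{i,t}<\mu-\tfrac{\delta}{3}\right)\le\exp\!\left(\frac{-(\delta/3)^2}{2\sum_{j\in V}c_j^2}\right),
\]
and then bound $\sum_j c_j^2\le\big(\max_j c_j\big)\sum_j c_j=p_t$. To control $p_t$, I would invoke Lemma~\ref{lemma random walk}: since the radius of $G$ is at least $t$, every vertex has eccentricity at least $t$, hence $\Pr(R_t=j)\le deg(j)\cdot C/\sqrt t\le dC/\sqrt t$ for every $j\in V$, i.e.\ $p_t\le dC/\sqrt t$. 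Substituting gives
\[
\Pr\!\left(B_{i,t}<\mu-\tfrac{\delta}{3}\right)\le\exp\!\left(\frac{-\delta^2\sqrt t}{18\,C\,d}\right),
\]
so the statement holds with $C_0:=1/(18C)$.

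Since Lemma~\ref{lemma random walk} already carries the analytic weight, there is no genuine obstacle left; the only points requiring care are bookkeeping ones: that ``radius $\ge t$'' transfers the eccentricity hypothesis of Lemma~\ref{lemma random walk} to all $j\in V$ simultaneously (so the bound on $p_t$ is uniform over the whole sum), that the independence of $R_t$ from $\mathcal H_0$ is what legitimizes treating the $c_j$ as deterministic weights, and that one uses the form of Hoeffding's inequality for weighted sums of $[0,1]$-valued random variables, which costs only the harmless constant already displayed in the bound preceding the corollary.
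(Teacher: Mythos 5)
Your proposal matches the paper's own argument: the paper likewise writes $B_{i,t}=\sum_{j}\Pr(R_t=j)B_{j,0}$ via the voter-model coupling, applies Hoeffding's inequality to this weighted sum, and bounds $\sum_j \Pr(R_t=j)^2\leq p_t=\mathcal O(d/\sqrt t)$ using Lemma~\ref{lemma random walk} under the radius hypothesis. Your bookkeeping (uniform eccentricity, independence of $R_t$ from $\mathcal H_0$, absorbing the factor $1/9$ into $C_0$) is correct and adds nothing essentially different.
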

	
	Let $n=\left\lfloor\frac{\delta}{3\varepsilon}-1\right\rfloor$. It follows that
	\[
	\Pr\left(D_{i,n}< \mu- \frac \delta 3\right)\leq \exp\left(-C_1\left(\frac{\delta^{2.5}}{d\sqrt{\varepsilon}}\right)\right),
	\]
	for some universal $C_1>0$. Define \[\rho_1=\rho_1(d,\varepsilon,\delta):=\exp\left(-C_1\left(\frac{\delta^{2.5}}{d\sqrt{\varepsilon}}\right)\right).\]

	\subsection{Up to time n}
	\label{section up to time n}
	Let $(G,W)$ be a weighted graph that satisfies the conditions of Theorem~\ref{theorem mu}. Let $g=\{g_{i,t}\}_{i\in V,t\geq 0}$ be an $(\varepsilon,\gamma,\eta)$-robust updating rule that induces the history $A\in\mathcal H$ with undistorted monitoring.
	
	Let $R_1=n$. We will prove Lemma~\ref{lemma mu} with some $R>R_1$. Therefore, until time $n$, agent $i$ is not influenced by any stubborn agent. We couple $A_{j,t}$ with DeGroot dynamics $D_{j,t}$ by setting $A_{j,0}=D_{j,0}$, for all $j\in V$. By Assumptions (A1) and (A2), and by induction on $t$, for any $t\leq n+1$,
	\[
	A_{i,t}\geq D_{i,t}-t\varepsilon\geq D_{i,t}-(n+1)\varepsilon\geq D_{i,t}- \frac{\delta}3.
	\]
	It follows that
	\begin{equation}\label{equation A_n}
		\Pr\left(A_{i,n}< \mu-\tfrac{2}3 \delta\right),\Pr\left(A_{i,n+1}< \mu-\tfrac{2}3 \delta\right)\leq \rho_1
	\end{equation}
	for any $i$ that is at least $R_1$ away from any stubborn agent.
	\subsection{From time n onward}
	\label{section from time n}
	For every $j\in V$ and $t\leq n+1$, define 
	\[
	A'_{j,t} = \begin{cases}
		\mu - \frac 2 3 \delta& A_{j,t}\geq \mu - \frac 2 3 \delta,\\
		0& A_{j,t} < \mu - \frac 2 3 \delta.
	\end{cases}
	\]
	For $t>n+1$, define $A'_{j,t}=g_{j,t}(A'_{(j),<t})$.
	
	By Theorem~\ref{theorem convergence}, the limits $z_i:=\lim_{t\to\infty}A_{i,2t}$ and $z'_i:=\lim_{t\to\infty}A'_{i,2t}$ exist. By Condition (A1), since $A'_{\leq n}\leq A_{\leq n}$, we have $A'_t\leq A_t$ for $t>n$ as well. Therefore, $z'_i\leq z_i$. It follows that
	\begin{multline*}
		\Pr(z_i<\mu-\delta)\leq \Pr(z'_i<\mu-\delta)\leq\\
		\leq \Pr\left(A'_{i,n}<\mu-\tfrac 2 3 \delta\right)+\Pr\left(|z'_{i}-A'_{i,n}|>\tfrac 1 3 \delta\right)\leq\\
		\leq \rho_1 + \Pr\left(V_n^\infty [A'_{i,t}]>\tfrac 1 3 \delta\right).
	\end{multline*}
	Let $L'(t)=L'_{i,1-\gamma}(t)$ be the Lyapunov function defined w.r.t.\ the history $A'$. 
	By Lemma~\ref{lemma variation} and Markov's inequality,
	\[
	P\left(V_n^\infty [A'_{i,t}]>\tfrac 1 3 \delta\right)\leq P\left(L'(n)> \tfrac {\eta\delta} 3\right)\leq \tfrac 3 {\eta\delta}\mathbb E[L'(n)].
	\]
	Summing up, we get 
	\begin{equation}\label{eq pzi}
		\Pr(Z_i<\mu-\delta)\leq \rho_1 + \tfrac 3 {\eta\delta}\mathbb E[L'(n)].
	\end{equation}
	
	It remains to bound $\mathbb E[L'(n)]$ from above. Let $\Delta_i \geq R\geq R_1$ be the distance between $i$ and the nearest stubborn agent (if there are no stubborn agents, then $\Delta_i=\infty$). Recall that $w(j,k):=W(j,k)\cdot (1-\gamma)^{d(i,\{j,k\})}$ and $M_i(1-\gamma):= \sum_{j\in V}\sum_{k\in(j)}w(j,k)$. Let $\widehat M_i(1-\gamma):= \sum_{j\in V}(1-\gamma)^{[d(i,j)-1]_+}\geq \frac {M_i(1-\gamma)}{\overline W}$. The proof of Lemma~\ref{lemma M_i} provides a stretched sub-exponential function $g(\tfrac{1}{\gamma})\geq \widehat M_i(1-\gamma)$.
	By \eqref{equation A_n},
	\begin{multline}\label{equation E[L']}
		\mathbb E[L'(n)]=\sum_{j,k\in V}w(j,k)\mathbb E\left[(A'_{j,n}-A'_{k,n+1})^2\right]\\
		\leq\sum_{\substack{j\in B(i,R-R_1).
				\\k\in(j)}}w(j,k)\left(\Pr(A'_{j,n}<\mu-\tfrac 2 3\delta )+\Pr(A'_{k,n+1}<\mu-\tfrac 2 3\delta )\right)+\sum_{\substack{j\not\in B(i,R-R_1)\\k\in(j)}}w(j,k)\leq\\
		\leq 2\rho_1 M_i(1-\gamma) + \overline{W}\sum_{j:d(i,j)\geq R-R_1+1}(1-\gamma)^{d(i,j)-1}\\
		\leq \overline W\left(2\rho_1  g\left(\frac 1 \gamma\right) + \sum_{j:d(i,j)\geq R-R_1+1}(1-\gamma)^{d(i,j)-1}\right).
	\end{multline}
	
	We now bound the term $\sum_{j:d(i,j)\geq R-R_1+1}(1-\gamma)^{d(i,j)-1}$ from above. Since $(1-\gamma)\leq \left(1-\tfrac{\gamma}{X}\right)^X$, for every $X\geq 1$, 
	\begin{multline}\label{equation tail M}
		\sum_{j:d(i,j)\geq R-R_1+1}(1-\gamma)^{d(i,j)-1} \leq \sum_{j:d(i,j)\geq R-R_1+1}\left(1-\frac{\gamma}{R-R_1}\right)^{(R-R_1)(d(i,j)-1)}
		\\
		\leq \left(1-\frac{\gamma}{R-R_1}\right)^{(R-R_1)(R-R_1-1)}\widehat M_i\left(1-\frac{\gamma}{R-R_1}\right)\\
		\leq \exp(-\gamma(R-R_1-1))g\left(\frac{R-R_1}{\gamma}\right).
	\end{multline}
	
	Plugging \eqref{equation tail M} into \eqref{equation E[L']} and the resulting expression into \eqref{eq pzi}, we get that Lemma~\ref{lemma mu} holds for any pair $\rho,R>0$ that satisfies the relation
	\[
	\frac \rho 4 = \rho_1+\tfrac{3\overline W}{\eta \delta}\left(2\rho_1  g\left(\frac 1 \gamma\right) + \exp(-\gamma(R-R_1-1))g\left(\frac{R-R_1}{\gamma}\right)\right).
	\]
	If we fix $\delta$, $\overline{W}$, and $f$ (and hence $g$), and suppose that $\eta^{-1}+\gamma^{-1}\leq poly(\varepsilon^{-1})$, then we can take
	\[
	R= \gamma^{-1.00001},
	\]
	and we have $\rho\to 0$ as $\varepsilon\to 0$. Furthermore, $\rho^{-1}$ is a stretched exponential function of $\varepsilon^{-1}$. To see this, recall that $R_1\leq \tfrac{\delta}{3\varepsilon}$, $\gamma\leq \varepsilon$ and $\rho_1=\exp\left(-\Omega\left(\frac{\delta^{2.5}}{\overline{W}\sqrt{\varepsilon}}\right)\right)$. Since $\rho_1^{-1}$ and $\exp(\gamma(R-R_1-1))$ are stretched exponential functions of $\varepsilon^{-1}$ and $g\left(\frac 1 \gamma\right)$, $g\left(\frac{R-R_1}{\gamma}\right)$ and $\tfrac{3\overline W}{\eta \delta}$ are stretched sub-exponential functions of $\varepsilon^{-1}$, we have that $\rho^{-1}$ is a stretched exponential function of $\varepsilon^{-1}$.

	This completes the proof of Lemma~\ref{lemma mu} for the case $\beta=0$.
	\subsection{Distorted monitoring}\label{section distorted monitoring}
	We turn now to prove Lemma~\ref{lemma mu} for any $\beta\in [0,\gamma)$. For any $A\in \mathcal H$,  define $A^{\beta}$, by
	\[
	A_{i,t}^{\beta} = \max\{0,A_{i,t}-\beta\}, \quad\forall i\in V\ \forall t\geq 0.
	\]
	For an updating rule $g$, define $g^\beta$ by \[
	g_{i,t}^\beta(A_{(i),<t})=g_{i,t}\left(A^\beta_{(i),<t}\right).
	\]
	By Lemma~\ref{lemma stubborn and bias reduction}, if $g$ is $(\varepsilon,\eta,\gamma)$-robust, then $g^\beta$ is $(\varepsilon+\beta,\eta,\gamma-\beta)$-robust. Let $A_{i,t}$ be a dynamics induced by $g$ with $\beta$-distorted monitoring, and let $A'_{i,t}$ be the dynamics induced by $g^\beta$ with the same initial opinions and with undistorted monitoring. Let $z'_i=\lim_{t\to\infty} A'_{i,2t}$ and $z_i=\lim_{t\to\infty} A_{i,2t}$. By induction on $t$, Condition (A1) implies that $A'_{i,t}\leq A_{i,t}$ for all $i\in V$ and $t>0$. Therefore, $z'_i\leq z_i$ for all $i\in V$. By Lemma~\ref{lemma mu}, for the case $\beta=0$, 
	\[
	\Pr(z_i<\mu-\delta )\leq \Pr(z'_i<\mu-\delta)\leq \tfrac 1 4\rho(\overline{W},\varepsilon+\beta,\eta,\gamma-\beta,\delta,f,0)
	\]
	for any $i$ that is at least $R(\overline{W},\varepsilon+\beta,\eta,\gamma-\beta,\delta,f,0)$ away from any stubborn agent. Therefore, letting 
	\begin{align*}
		R(\overline{W},\varepsilon,\eta,\gamma,\delta,f,\beta) &= R(\overline{W},\varepsilon+\beta,\eta,\gamma-\beta,\delta,f,0),\\
		\rho(\overline{W},\varepsilon,\eta,\gamma,\delta,f,\beta) &= \rho(\overline{W},\varepsilon+\beta,\eta,\gamma-\beta,\delta,f,0)
	\end{align*}
	provides the requirements of Lemma~\ref{lemma mu}.
	
	This completes the proof of Lemma~\ref{lemma mu} and, therefore, of Theorem~\ref{theorem mu} as well.
	
	\section{Additional proofs}
	\begin{proof}[Proof of Theorem~\ref{theorem:golubjackson}]
		Let $G=(V,E)$ be an infinite connected graph of bounded degree. And let $D_{i,t}$ be a standard DeGroot dynamics on $G$ with bounded i.i.d. initial opinions with expectation $\mu$. By scaling we may assume w.l.o.g.\ that $D_{i,t}\in[0,1]$. By applying Corollary~\ref{corollary hoeffding} to both $D_{i,t}$ and $1-D_{i,t}$, we get
		\[
		\sum_{t=1}^\infty\Pr(|D_{i,t}-\mu|>\delta)<\infty,
		\]
		for every $i\in V$ and $\delta>0$.
		By the Borel--Cantelli lemma, we get that $D_{i,t}$ converges to $\mu$ almost surely, for every $i\in V$.
	\end{proof}
	\begin{proof}[Proof of Lemma~\ref{lemma bots}]
		Let $G=(V,E)$ be a finite connected graph. Let the variables $(D_{i,0})_{i\in V,i\neq i_0}$ be any initial opinions for all the agents but agent $i_0$. We assume that all agents, but agent $i_0$, apply the DeGroot dynamics. That is, we assume $D_{i_0,t}=c$ is constant at any time and that $D_{i,t}$ is the standard DeGroot dynamics on $G$. 
		
		Let $R_{i,t}$ be a standard random walk started at node $i$ such that node $i_0$ is an absorbing state. That is, when the random walk enters $i_0$ it stays there and never exits.
		By induction on $t$, for any $i\in V$ one has that $D_{i,t}=\sum_{j\in V}P(R_{i,t}=j)D_{j,0}$. 
		Since $G$ is a connected and finite graph it follows that 
		$\lim_{t\to\infty}P(R_{i,t}=i_0)=1$ for every $i\in V$.
		Therefore $\lim_{t\to\infty}D_{i,t}=D_{i_0,0}$ for every node $i$.
	\end{proof}
	
	{
		\section{Example: divergence of standard DeGroot}\label{section example divergence}
		
		We present an infinite graph $G=(V,E)$, a configuration of initial opinions $\{D_{i,0}\}_{i\in V}$ and an agent $i_0\in V$ such that the standard DeGroot dynamics on $G$ diverges at $i_0$, namely, the limit $\lim_{t\to \infty}D_{i_0,2t}$ does not exist.   
		Consider the line graph $\mathbb Z$. Namely, $V= \mathbb Z$ and $E=\{(n,n+1):n\in \mathbb Z\}$. Let $i_0 = 0$. By \eqref{eq: voter} and the sentence that follows it, we have
		\[
		D_{0,2t}=\sum_{j=-t}^t\binom{2t}{j+t}2^{-2t}D_{2j,0}.
		\]
		Let $t_1<t_2<\cdots$ be a sequence of natural numbers such that 
		\[
		\lim_{n\to\infty}\sum_{t_n<|j|\leq t_{n+1}}\binom{2t_{n+1}}{j+t_{n+1}}2^{-2t_{n+1}}=1.
		\]
		For $|j|\leq t_1$, set $D_{j,0}:=0$, and for $t_n<|j|\leq t_{n+1}$ set 
		\[
		D_{j,0}:=\begin{cases}
			1&\text{$n$ odd,}\\
			0&\text{$n$ even.}
		\end{cases}
		\]
		It follows that $\lim_{n\to\infty}D_{0,2t_{2n}}=1$, whereas, $\lim_{n\to\infty}D_{0,2t_{2n+1}}=0$. Hence, $\lim_{t\to\infty}D_{0,2t}$ does not exist.
	}
\end{document}